\documentclass[reqno, 12pt]{amsart}
\pdfoutput=1
\makeatletter
\let\origsection=\section 
\def\section{\@ifstar{\origsection*}{\mysection}} 
\def\mysection{\@startsection{section}{1}\z@{.7\linespacing\@plus\linespacing}{.5\linespacing}{\normalfont\scshape\centering\S}}
\makeatother

\usepackage{amsmath,amssymb,amsthm}
\usepackage{mathrsfs}
\usepackage{mathabx}\changenotsign

\usepackage{xcolor}
\usepackage[backref]{hyperref}
\hypersetup{
    colorlinks,
    linkcolor={red!60!black},
    citecolor={green!60!black},
    urlcolor={blue!60!black}
}

\usepackage{bookmark}
\usepackage[abbrev,msc-links,backrefs]{amsrefs}
\usepackage{doi}

\renewcommand{\PrintDOI}[1]{\doi{#1}}

\usepackage[T1]{fontenc}
\usepackage{lmodern}
\usepackage{microtype}

\usepackage[english]{babel}

\linespread{1.3}
\usepackage{geometry}
\geometry{left=27.5mm,right=27.5mm, top=25mm, bottom=25mm}

\usepackage{enumitem}

\def\alabel{\upshape({\itshape \alph*\,})}

\let\polishlcross=\l
\def\l{\ifmmode\ell\else\polishlcross\fi}

\newcommand\qand{\quad\text{and}\quad}

\let\emptyset=\varnothing
\let\setminus=\smallsetminus

\makeatletter
\def\moverlay{\mathpalette\mov@rlay}
\def\mov@rlay#1#2{\leavevmode\vtop{
   \baselineskip\z@skip \lineskiplimit-\maxdimen
   \ialign{\hfil$\m@th#1##$\hfil\cr#2\crcr}}}
\newcommand{\charfusion}[3][\mathord]{
    #1{\ifx#1\mathop\vphantom{#2}\fi
        \mathpalette\mov@rlay{#2\cr#3}
      }
    \ifx#1\mathop\expandafter\displaylimits\fi}
\makeatother

\newcommand{\dcup}{\charfusion[\mathbin]{\cup}{\cdot}}

\usepackage{graphicx}
\usepackage{caption}
\usepackage{subcaption}
\DeclareCaptionSubType*[arabic]{figure} 
\captionsetup[subfigure]{labelformat=simple,labelsep=colon}

\let\eps=\varepsilon
\let\theta=\vartheta
\let\rho=\varrho

\DeclareMathOperator{\Bin}{Bin}

\newcommand{\pr}[1]{{\mathbb P}\left(#1\right)}

\def\iti#1{{\rm ({\it #1\,})}}

\def\NN{\mathbb N}

\def\ccL{\mathcal{L}^{(k)}}

\def\cA{{\mathcal A}}
\def\cH{{\mathcal H}}
\def\cL{{\mathcal L}}
\def\cM{{\mathcal M}}
\def\cF{{\mathcal F}}
\def\cB{{\mathcal B}}
\def\cC{{\mathcal C}}

\def\cP{{\mathcal P}}
\def\cQ{{\mathcal Q}}

\def\cK{{\mathcal K}}

\def\cV{{\mathcal V}}

\def\ccL{{\mathscr{L}}}
\def\ccM{{\mathscr{M}}}

\DeclareMathOperator{\Forb}{\cF}

\def\dL{{\ccL}}

\newtheorem{theorem}{Theorem}
\newtheorem{fact}[theorem]{Fact}

\newtheorem{definition}[theorem]{Definition}
\newtheorem{lemma}[theorem]{Lemma}
\newtheorem{proposition}[theorem]{Proposition}
\newtheorem{claim}[theorem]{Claim}

\begin{document}
\title[Minimum vertex degree conditions for loose Hamilton cycles]{Minimum vertex degree conditions for loose Hamilton cycles in $3$-uniform hypergraphs}

\author[Enno Bu\ss]{Enno Bu\ss}
\address{Fachbereich Mathematik, Universit\"at Hamburg, Hamburg, Germany}
\email{ennobuss@gmail.com}

\author[Hi\d{\^e}p H\`an]{Hi\d{\^e}p H\`an}
\address{Fachbereich Mathematik, Universit\"at Hamburg, Hamburg, Germany}
\curraddr{Instituto de Matem\'atica e Estat\'{\i}stica, Universidade de 
  S\~ao Paulo, S\~ao Paulo, Brazil}
\email{hh@ime.usp.br}

\author[Mathias Schacht]{Mathias Schacht}
\address{Fachbereich Mathematik, Universit\"at Hamburg, Hamburg, Germany}
\email{schacht@math.uni-hamburg.de}
\thanks{H.\ H\`an was supported by GIF grant no.~I-889-182.6/2005. 
	M. Schacht was supported through the Heisenberg-Programme of the
	DFG. The collaboration of the authors was supported by joint grant of the Deutschen Akademischen Austausch Dienst
	(DAAD) and the Funda\c{c}\~{a}o Coordena\c{c}\~{a}o de Aperfei\c{c}oamento de Pessoal de N\'\i vel (CAPES)}

\begin{abstract}
We investigate minimum vertex degree conditions for $3$-uniform hypergraphs which ensure the existence of loose Hamilton cycles. 
A loose Hamilton cycle  is a spanning cycle  
in which only consecutive edges intersect and these intersections consist of precisely one vertex.
 
 We prove that every
$3$-uniform $n$-vertex ($n$ even) hypergraph $\cH$ with minimum vertex degree  $\delta_1(\cH)\geq \left(\frac7{16}+o(1)\right)\binom{n}2$ contains a loose Hamilton cycle.
This bound is asymptotically best possible.
\end{abstract}

\keywords{hypergraphs, Hamilton cycles, degree conditions}
\subjclass[2010]{05C65 (primary), 05C45 (secondary)}

\maketitle

\section{Introduction}

We consider $k$-uniform hypergraphs $\cH=(V,E)$ with vertex sets $V=V(\cH)$ 
and edge sets $E=E(\cH)\subseteq \binom{V}k$, where $\binom{V}k$ denotes the family of all $k$-element subsets of the set~$V$.
We often  identify a hypergraph $\cH$ with its  edge set, i.e., $\cH\subseteq\binom{V}k$, and for an edge $\{v_1,\dots,v_k\}\in \cH$
we often suppress the enclosing braces and write $v_1\dots v_k\in\cH$ instead.
Given a $k$-uniform hypergraph $\cH=(V,E)$ and 
a set $S=\{v_1,\dots,v_s\}\in \binom{V}s$ 
let $\deg(S)=\deg(v_1,\dots,v_s)$ denote the number of edges of~$\cH$ containing the set $S$ and 
let~$N(S)=N(v_1,\dots,v_s)$ denote the set of those $(k-s)$-element sets $T\in\binom{V}{k-s}$ such that~$S\cup T$ forms an edge in $\cH$. 
We denote by $\delta_s(\cH)$ the minimum $s$-degree of $\cH$, i.e., the minimum of 
$\deg(S)$ over all $s$-element sets $S\subseteq V$. For $s=1$ the corresponding minimum degree $\delta_1(\cH)$ is  referred to as minimum vertex degree
whereas for $s=k-1$ we call the corresponding minimum degree $\delta_{k-1}(\cH)$ the minimum collective degree of $\cH$.

We  study sufficient minimum  degree conditions which enforce the existence of spanning, so-called Hamilton cycles. 
A $k$-uniform hypergraph $\cC$ is called an \emph{$\l$-cycle} if there is a  cyclic ordering
of the vertices  of $\cC$ such that every edge consists of~$k$ consecutive vertices, every
vertex is contained in an edge and
two consecutive edges (where the ordering of the edges is inherited by the
ordering of the vertices) intersect in exactly $\l$ vertices. 
For $\l=1$ we call the cycle  \emph{loose} whereas the cycle is called \emph{tight} if $\l=k-1$. 
Naturally, we say that a $k$-uniform, $n$-vertex hypergraph $\cH$ contains a Hamilton $\l$-cycle if
there is a subhypergraph of $\cH$ which forms an $\l$-cycle and which covers all
vertices of $\cH$.  Note that a Hamilton $\l$-cycle contains exactly $n/(k-\l)$ edges, implying that
the number of vertices of $\cH$ must be divisible by $(k-\l)$ which we indicate by $n\in(k-\l)\NN$.

Minimum collective degree conditions which ensure the existence of  tight Hamilton cycles were first studied in~\cite{KK} and in~\cites{RRS3,RRSk}. In particular,
in~\cites{RRS3,RRSk} R\"odl, Ruci\'nski, and Szemer\'edi found asymptotically sharp bounds for this problem. 
\begin{theorem}\label{thm:RRS}
For every $k\geq 3$ and $\gamma>0$ there exists an~$n_0$ such that every $k$-uniform hypergraph 
$\cH=(V,E)$ on $|V|=n\geq n_0$ vertices with $\delta_{k-1}(\cH)\geq (1/2+\gamma)n$ contains 
a tight Hamilton cycle.\qed
\end{theorem}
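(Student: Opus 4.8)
To prove Theorem~\ref{thm:RRS} the natural approach is the \emph{absorption method}. The idea is to assemble the tight Hamilton cycle from three pieces: a short but ``flexible'' absorbing tight path $A$ that can later engulf any sufficiently small leftover set of vertices; a tight path covering almost all of the remaining vertices; and a linear-sized reservoir $R$ of vertices held back purely for gluing the pieces together. The only structural fact used is the hypothesis $\delta_{k-1}(\cH)\geq(1/2+\gamma)n$, which says that every $(k-1)$-set $S$ satisfies $\deg(S)>n/2$, so that more than half of all vertices extend any prescribed end of a tight path.

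First I would prove a \emph{connecting lemma}: there is a constant $\ell_0=\ell_0(k,\gamma)$ such that any two disjoint ordered $(k-1)$-tuples $\vec a,\vec b$ of vertices can be joined by a tight path of length at most $\ell_0$, and moreover the at most $\ell_0$ interior vertices may be taken inside any prescribed vertex set that itself obeys the codegree bound with a slightly weaker constant. This is a breadth-first argument: writing $\cL_i$ for the set of ends of tight paths of length $i$ starting at $\vec a$ and running inside the prescribed set, each such end has more than $(\gamma/2)n$ admissible continuations there, so $|\cL_i|$ keeps growing (up to bounded multiplicative overlaps) until it is a positive fraction of all $(k-1)$-tuples; the same holds for the ends of short paths terminating at $\vec b$, and two such families of linear density can be joined by one more short segment via an inclusion--exclusion count. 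A routine probabilistic deletion then produces a reservoir $R\subseteq V$ with $|R|=\mu n$ (for a small constant $\mu$) such that $R$ still satisfies the codegree bound internally up to an $o(1)$ loss --- so connections can be routed through $R$ --- while no vertex lies in too many potential connectors, so that linearly many connections can be carried out without exhausting $R$.

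Second I would prove an \emph{absorbing lemma}: $\cH$ contains a tight path $A$ with $|V(A)|\leq\beta n$, for a small constant $\beta$, such that for every set $W\subseteq V\setminus V(A)$ with $|W|\leq\beta^2n$ there is a tight path on the vertex set $V(A)\cup W$ with the same two ends as $A$. The elementary building block is a bounded-size \emph{absorber} for a single vertex $v$: a short tight path with prescribed ends into which $v$ can be inserted while preserving tightness and both ends. Iterated use of the codegree bound shows that every vertex of $\cH$ admits $\Omega(n^{2k-2})$ such absorbers; choosing a random family of $\Theta(\beta n)$ absorbers and applying first- and second-moment estimates yields a family that is essentially vertex-disjoint and in which every vertex of $V$ is served by many members. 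Concatenating these absorbers into a single tight path by the connecting lemma produces $A$; afterwards any small $W$ is absorbed greedily, one vertex per hitherto unused absorber, a system of distinct representatives existing by a Hall-type condition.

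Finally, having set aside $V(A)$ and $R$, I would cover all but $o(n)$ of the remaining vertices by a bounded number of vertex-disjoint tight paths --- via a rotation--extension (P\'osa-type) argument on a longest tight path in the leftover hypergraph, or via the weak hypergraph regularity lemma --- and then splice $A$ together with these paths into one tight cycle $\cC$ by a constant number of connections through $R$. The set $W$ of vertices missed by $\cC$ is the union of the $o(n)$ uncovered vertices with the unused part of $R$, hence $|W|\leq\beta^2n$ once the constants satisfy $\mu,\varepsilon\ll\beta\ll\gamma$; absorbing $W$ into $A$ gives a spanning tight cycle. I expect the main obstacle to lie precisely in this balancing act: the reservoir must be large enough to service linearly many gluing operations yet small enough (and idle enough at the end) to be swallowed by $A$, which forces a careful hierarchy of constants and a robust, many-connectors form of the connecting lemma; the other delicate point is designing the tight-path absorber gadget and verifying its abundance from a codegree --- rather than a pair-degree --- hypothesis.
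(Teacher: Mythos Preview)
The paper does not prove Theorem~\ref{thm:RRS}; it is quoted from~\cites{RRS3,RRSk} as background (note the \qed\ immediately after the statement and the attribution in the preceding sentence). There is therefore no proof in the paper to compare your proposal against.

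That said, your outline is a faithful high-level description of the absorption strategy that R\"odl, Ruci\'nski, and Szemer\'edi actually carry out in~\cites{RRS3,RRSk}: a connecting lemma exploiting $\delta_{k-1}(\cH)>n/2$, a random reservoir, absorbers for single vertices assembled into an absorbing path, an almost-spanning path cover, and a final gluing step through the reservoir followed by absorption of the leftover. The present paper adapts the same architecture to the \emph{loose} cycle under a \emph{vertex}-degree hypothesis (Lemmas~\ref{lem:connecting}--\ref{lem:pathtiling}), and you can see the parallel structure there. One correction of emphasis: in your last paragraph you say the main obstacle is the constant hierarchy for the reservoir; in practice that bookkeeping is routine once the lemmas exist, and the genuinely hard ingredients in~\cites{RRS3,RRSk} are the design and counting of the absorber gadget from a codegree hypothesis and the almost-spanning path cover --- exactly as in this paper, where the analogue (Lemma~\ref{lem:pathtiling}) is singled out as the main difficulty.
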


The corresponding question for loose cycles was first studied by K\"uhn and Osthus. In~\cite{KO3} they proved an asymptotically sharp bound on the minimum collective degree
which ensures the existence of loose Hamilton cycles in $3$-uniform hypergraphs. 
This result was generalised to higher uniformity by the last two authors~\cite{loosecyc} and independently by Keevash, K\"uhn, Osthus and Mycroft in~\cite{KKMO}.
\begin{theorem}
\label{thm:loosecyccodeg}
For all integers $k\geq 3$  and every $\gamma>0$ there exists an $n_0$ such that
every $k$-uniform hypergraph 
$\cH=(V,E)$ on $|V|=n\geq n_0$ vertices with  $n\in(k-1)\NN$ and $\delta_{k-1}(\cH)\geq (\frac{1}{2(k-1)}+\gamma)n$ 
contains a loose Hamilton cycle.\qed
\end{theorem}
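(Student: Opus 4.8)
The natural route is the \emph{absorbing method} of R\"odl, Ruci\'nski and Szemer\'edi. Fix $k\ge 3$ and $\gamma>0$, write $d=\tfrac1{2(k-1)}+\gamma$, and fix auxiliary constants $\gamma\gg\alpha\gg\mu>0$ and a large $n_0$, so that we may assume $\delta_{k-1}(\cH)\ge dn$ with $n\ge n_0$. At the outset I would set aside, by a random choice, a \emph{reservoir} $R\subseteq V$ with $|R|\le\mu n$ such that with high probability every $(k-1)$-subset of $V$ still has at least $(d-\mu)|R|$ neighbours inside $R$. Its only role is a \emph{Connecting Lemma}: given two disjoint loose paths with end-vertices $x,y\notin R$, there is a loose path of bounded length from $x$ to $y$ whose interior lies in the currently unused part of $R$. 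Indeed one or two edges suffice, because each relevant $(k-1)$-set still has $\ge(d-\mu)|R|>0$ neighbours available in $R$; since the whole argument performs only $O(1/\alpha)$ connections, this greedy use of $R$ never exhausts it.

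\emph{Absorbing Lemma.} I would build a single loose path $\mathbb A$ with $|V(\mathbb A)|\le\alpha n$ such that for every $U\subseteq V\setminus V(\mathbb A)$ with $|U|\in(k-1)\NN$ and $|U|\le 4\mu n$ there is a loose path on exactly $V(\mathbb A)\cup U$ with the same two end-vertices as $\mathbb A$. The unit of absorption is a $(k-1)$-set $S$ --- the residue one must be able to absorb, since every loose cycle spans a multiple of $k-1$ vertices --- and an \emph{$S$-absorber} is a constant-size loose path $P$ such that both $V(P)$ and $V(P)\cup S$ span loose paths with the same ends. Using $\delta_{k-1}(\cH)\ge dn$ a short counting argument shows that every $(k-1)$-set has $\Omega(n^{c})$ absorbers for a suitable $c=c(k)$; a first-moment selection followed by a deletion step then extracts a family $\cF$ of pairwise vertex-disjoint absorbers, small enough that the path $\mathbb A$ obtained by chaining them with the Connecting Lemma satisfies $|V(\mathbb A)|\le\alpha n$, yet large enough that every $(k-1)$-set is absorbed by at least $4\mu n/(k-1)$ members of $\cF$. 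Replacing, inside $\mathbb A$, the absorbers for the parts of a balanced partition of $U$ (the required injective choice of absorbers exists by Hall's condition) realises the absorption.

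\emph{Almost-covering and assembly.} After deleting $V(\mathbb A)$ and $R$, the remaining hypergraph $\cH'$ still has $\delta_{k-1}(\cH')\ge(\tfrac1{2(k-1)}+\tfrac\gamma2)|V(\cH')|$, and I would show it can be covered, up to at most $\mu n$ vertices, by at most $1/\alpha$ pairwise vertex-disjoint loose paths. For this I would apply the weak hypergraph regularity lemma to $\cH'$ and pass to the cluster reduced hypergraph, which --- after discarding irregular and low-density $k$-tuples --- inherits minimum $(k-1)$-degree exceeding $\tfrac1{2(k-1)}$ of its order; this is exactly the threshold guaranteeing a near-spanning loose-path structure in the reduced hypergraph, and blowing each reduced loose path up to a genuine long loose path inside the corresponding dense regular $k$-partite clusters yields the asserted bounded family of loose paths in $\cH'$. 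Finally, using the Connecting Lemma through $R$ I would link $\mathbb A$ and all these paths end to end and back to $\mathbb A$, producing a single loose cycle $\cC$. Its uncovered set $U=V\setminus V(\cC)$ consists only of the unused part of $R$ together with the at most $\mu n$ vertices missed by the cover, so $|U|\le 2\mu n$; and since $v(\cC)\in(k-1)\NN$ and $n\in(k-1)\NN$ we have $|U|\in(k-1)\NN$. The Absorbing Lemma then swallows $U$ into $\mathbb A\subseteq\cC$, turning $\cC$ into a loose Hamilton cycle of $\cH$.

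\emph{Main obstacle.} The heart of the matter is the almost-covering step: showing that $\delta_{k-1}\ge\tfrac1{2(k-1)}n$ is precisely what is needed to cover all but $o(n)$ vertices by boundedly many loose paths --- equivalently, that the reduced hypergraph carries a near-spanning loose-path structure --- and, hand in hand with it, exhibiting the extremal configuration which shows the constant $\tfrac1{2(k-1)}$ cannot be lowered. Arranging the regularity blow-up so that the resulting long loose paths are genuinely vertex-disjoint, and handling the $O(1)$ ``remainder'' vertices at their ends (including the divisibility bookkeeping), is the delicate part; the Connecting and Absorbing Lemmas, though technical, are essentially routine once $d>\tfrac1{2(k-1)}$.
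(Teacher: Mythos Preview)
The paper does not prove this theorem at all: the statement is quoted from the literature (it is the main result of H\`an--Schacht~\cite{loosecyc}, and independently of Keevash--K\"uhn--Mycroft--Osthus~\cite{KKMO}) and is closed with a \qed{} immediately after the display. So there is no ``paper's own proof'' to compare with.

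Your outline is, at the level of a sketch, an accurate summary of the absorbing-method strategy used in~\cite{loosecyc}: reservoir, connecting, absorbing path, almost-cover by $O(1)$ long loose paths via weak regularity, then assemble and absorb the leftover. You also correctly single out the almost-covering step as the place where the precise constant $\tfrac1{2(k-1)}$ enters. One detail worth tightening: your Connecting Lemma says ``one or two edges suffice, because each relevant $(k-1)$-set still has neighbours in $R$''. The ends of a loose path are single vertices, not $(k-1)$-sets, so the collective-degree hypothesis is not directly applicable to an end $x$; in the actual proofs one first pads $x$ with $k-2$ fresh reservoir vertices to form a $(k-1)$-set and only then invokes $\delta_{k-1}$, and the connection between two ends typically uses a short chain of several edges rather than literally one. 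This is routine once noticed, but as written your sketch hides the only place where care is needed in the connecting step.
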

Indeed, in \cite{loosecyc}  asymptotically sharp bounds  for Hamilton $\l$-cycles for all $\l<k/2$ were obtained. 
Later this result was generalised to all $0<\l<k$ by K\"uhn,  Mycroft, and Osthus~\cite{KMO}.
These results are asymptotically best possible for all $k$ and $0<\l<k$.
Hence, asymptotically, the problem of finding Hamilton $\l$-cycles in uniform hypergraphs with large minimum \emph{collective} degree is solved.

We focus on minimum \emph{vertex} degree conditions which ensures the existence of Hamilton cycles.
For $\delta_1(\cH)$ very few results on spanning subhypergraphs are known (see e.g. \cites{matchings, RRsurvey}).
In this paper we give an asymptotically sharp bound on the minimum vertex degree in $3$-uniform hypergraphs 
which enforces the existence of loose Hamilton cycles.
\begin{theorem}[Main result]
\label{thm:main}
For all $\gamma>0$ there exists an $n_0$ such that the following holds. Suppose $\cH$ is a $3$-uniform hypergraph on $n>n_0$ with $n\in 2\NN$ and
\[\delta_1(\cH)>\left(\frac{7}{16}+\gamma\right)\binom{n}2.\]
Then $\cH$ contains a loose Hamilton cycle.
\end{theorem}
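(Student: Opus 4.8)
\emph{Overview.} The plan is to prove Theorem~\ref{thm:main} by the absorbing method, working with a hierarchy of constants $1/n\ll\alpha\ll\gamma$ that I leave implicit. Three ingredients are needed: a \emph{connecting lemma}, letting one splice loose paths together through a few auxiliary vertices; an \emph{absorbing lemma}, producing a single loose path $P_{\mathrm{abs}}$ able to swallow any small leftover set of the correct parity; and an \emph{almost-cover lemma}, covering all but a negligible number of vertices by a bounded number of vertex-disjoint loose paths. The first two need only the weaker bound $\delta_1(\cH)\ge(\tfrac14+\gamma)\binom n2$ that the hypothesis implies; the third is where the value $\tfrac7{16}$ is genuinely used, and it is the main obstacle.

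\emph{Connecting and absorbing.} The connecting lemma asserts that, when $\delta_1(\cH)\ge(\tfrac14+\gamma)\binom n2$, any two disjoint short loose paths with prescribed ends can be joined into one loose path using a bounded number of new vertices; it is proved by a double-counting in which each new edge can be chosen in at least $\delta_1(\cH)-O(n)$ ways. For the absorbing lemma one fixes, for every pair $\{x,y\}$ of vertices, a collection of ``absorber'' segments --- short loose-path pieces that can be re-routed so as to pass additionally through $x$ and $y$ using one extra edge --- and shows, again from the vertex-degree bound alone, that every pair has polynomially many absorbers. A random selection with deletion of overlaps yields a bounded disjoint family from which a constant fraction survives for each pair; chaining its members via the connecting lemma gives $P_{\mathrm{abs}}$ with $|V(P_{\mathrm{abs}})|\le\alpha n$ such that for every $W\subseteq V\setminus V(P_{\mathrm{abs}})$ with $|W|\in2\NN$ and $|W|\le\alpha^{1/2}n$ there is a loose path on $V(P_{\mathrm{abs}})\cup W$ with the endpoints of $P_{\mathrm{abs}}$. (The parity is forced since a loose path has an odd number of vertices; $n\in2\NN$ makes this consistent at the end.) One also reserves a small random set $R$ disjoint from $P_{\mathrm{abs}}$, $|R|\le\alpha^2 n$, through which loose paths lying outside $R$ can always be connected.

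\emph{The almost-cover lemma --- the crux.} It remains to show that $\cH':=\cH-V(P_{\mathrm{abs}})-R$, which still has $\delta_1(\cH')\ge(\tfrac7{16}+\tfrac\gamma2)\binom{|V(\cH')|}2$, contains a bounded number of vertex-disjoint loose paths covering all but at most $\alpha^2 n$ vertices. The constant $\tfrac7{16}$ is sharp, matching the extremal configuration behind the theorem: for $V=A\dcup B$ with $|A|=\lceil n/4\rceil-1$ and $\cH$ the set of all triples meeting $A$ one has $\delta_1(\cH)=\bigl(\tfrac7{16}+o(1)\bigr)\binom n2$, yet every edge of a loose path meets $A$ while each vertex of $A$ lies in at most two edges of a loose path, so any $o(n)$ loose paths cover at most $4|A|+o(n)<n$ vertices. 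For the positive direction I would apply the weak (Frieze--Kannan) regularity lemma to $\cH'$, pass to the reduced $3$-uniform hypergraph $R^\ast$ on the cluster set --- which inherits minimum vertex degree $(\tfrac7{16}+o(1))\binom t2$, $t$ the number of clusters --- and prove that every such $R^\ast$ has a spanning family of a bounded number of vertex-disjoint loose paths. Each of these is turned into a loose path of $\cH'$ covering all but an $\eps$-fraction of the vertices of the clusters it uses, by greedily embedding inside the corresponding dense regular triples. The combinatorial statement about $R^\ast$ invites a stability dichotomy: either $R^\ast$ is far from the $|A|\approx t/4$ configuration, and the cover is obtained robustly (e.g.\ by repeatedly pulling off long loose paths, an endpoint of a maximal one otherwise having too small a vertex degree), or $R^\ast$ is close to it, and the surplus $\tfrac\gamma2\binom t2$ of edges above the extremal bound is precisely what completes the cover.

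\emph{Assembling the cycle.} Finally, using the connecting lemma and the reservoir $R$, I would arrange the bounded family of loose paths of the almost-cover together with $P_{\mathrm{abs}}$ cyclically and splice them into one loose cycle $\cC_0$ spanning all of $V$ except a set $W$ with $|W|\le\alpha^2 n+|R|\le\alpha^{1/2}n$; as $n$ and $|V(\cC_0)|$ are both even, $|W|\in2\NN$. The absorption property of $P_{\mathrm{abs}}$ then reroutes its segment of $\cC_0$ through $W$, producing a loose Hamilton cycle of $\cH$. The one step that is not by now routine is the almost-cover lemma at the exact density $\tfrac7{16}$ --- equivalently, that a $3$-uniform hypergraph with $\delta_1>\tfrac7{16}\binom n2$ can be almost covered by few loose paths --- and that is where I expect essentially all the difficulty to be.
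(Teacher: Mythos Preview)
Your global architecture matches the paper's exactly: absorbing path, reservoir, almost-cover by $O(1)$ vertex-disjoint loose paths, connect through the reservoir, absorb the remainder; and you correctly isolate the almost-cover step (the paper's Path-tiling Lemma, Lemma~\ref{lem:pathtiling}) as the only place where the threshold $\tfrac{7}{16}$ is genuinely used. Two minor corrections on the periphery: the paper's Absorbing Lemma actually requires $\delta_1\ge(\tfrac58+\gamma)^2\binom n2\approx 0.39\binom n2$, not $(\tfrac14+\gamma)\binom n2$ --- still below $\tfrac7{16}$, so the theorem is unaffected, but the counting of absorbing $7$-tuples (Proposition~\ref{prop:counting}) is more delicate than your one-line degree argument suggests; and the regularity used is the hypergraph extension of Szemer\'edi's lemma (Theorem~\ref{thm:wreg}), not Frieze--Kannan regularity, since one needs genuinely regular triples to embed long paths.

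Where you and the paper diverge is the proof of the Path-tiling Lemma itself. You propose a stability dichotomy on the reduced hypergraph --- far from the extremal $|A|\approx t/4$ configuration, pull off long loose paths greedily; close to it, exploit the $\gamma$-surplus --- followed by a direct blow-up of each reduced loose path. The paper does neither. It tiles the cluster hypergraph not with loose paths but with copies of a fixed $8$-vertex, $4$-edge gadget $\cM$ (edges $123,345,456,678$). The point of $\cM$ is twofold: each $\cM$-copy in the cluster hypergraph decomposes into four regular triples with class sizes in ratio $3{:}3{:}2$, which by Proposition~\ref{prop:pathinreg} can be almost perfectly covered by loose paths; and the $\cM$-tiling problem admits a \emph{fractional} relaxation (Definition~\ref{def:frachomM}) that converts back to integer tilings via Proposition~\ref{prop:Mfrac2int}. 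The almost-perfect $\cM$-tiling (Lemma~\ref{lem:Mtiling}) is then obtained by a local-improvement argument rather than stability: if a maximal $\cM$-tiling of the cluster hypergraph leaves too many vertices uncovered, a counting argument forces two uncovered vertices to see at least $29$ crossing edges into some pair of tiled $\cM$-copies, and a finite case analysis (Lemma~\ref{lem:frachomextent}, via Claims~\ref{claim:enno1} and~\ref{claim:enno2}) produces a fractional $\hom(\cM)$-tiling of weight exceeding~$16$ on those $18$ vertices, contradicting maximality. Your stability outline is plausible but is only an outline --- in particular, the ``maximal loose path has an endpoint of small degree'' heuristic that works for graph Hamiltonicity does not transfer cleanly to $3$-uniform loose paths, and the blow-up of a reduced loose path into $\cH$ requires a specific winding pattern (shared vertices alternating between the two boundary clusters of each reduced edge) rather than a naive triple-by-triple split, which would leave triples of shape $(m,m,m/2)$ that \emph{cannot} be almost-covered by loose paths. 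The paper's $\cM$-based route sidesteps both issues.
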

In the proof we apply the so-called absorbing technique.
In \cite{RRS3} R\"odl, Ruci\'nski, and Szemer\'edi introduced this elegant approach to tackle minimum degree problems for spanning graphs and hypergraphs.
In our case it reduces the problem of finding a loose Hamilton cycle to the problem of finding a \emph{nearly} spanning
loose path and indeed, finding such a path will be the main obstacle to Theorem~\ref{thm:main}.

As mentioned above, Theorem~\ref{thm:main}  is best possible up to the error constant $\gamma$ as seen by the following construction from~\cite{KO3}.

\begin{fact}\label{fact:lb}
For every  $n\in 2\NN$ there exists a $3$-uniform hypergraph 
$\cH_{3}=(V,E)$ on $|V|=n$ vertices with $\delta_{1}(\cH_{3})\geq \frac{7}{16}\binom{n}2-O(n)$, which does not 
contain a loose Hamilton  cycle.
\end{fact}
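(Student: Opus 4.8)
The plan is to exhibit an explicit extremal hypergraph. The guiding observation is the identity $\tfrac7{16}=1-\tfrac9{16}=1-\bigl(\tfrac34\bigr)^2$, which suggests splitting the vertex set as $V=A\dcup B$ with $|A|$ just below $n/4$ and $B=V\setminus A$ of size just above $3n/4$, and declaring a triple to be an edge of $\cH_3$ precisely when it meets $A$. Concretely I would take $|A|=\lceil n/4\rceil-1$, so that $|A|<n/4$ while still $|A|=n/4-O(1)$, and set
\[
E(\cH_3)=\{\,e\in\binom{V}{3}\ :\ e\cap A\neq\emptyset\,\}.
\]

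For the degree bound: every triple containing a fixed vertex of $A$ meets $A$, so that vertex has degree $\binom{n-1}{2}$. A vertex $v\in B$ lies in an edge of $\cH_3$ exactly when at least one of the remaining two vertices is in $A$, that is, when those two vertices are \emph{not} both in $B\setminus\{v\}$; hence
\[
\deg(v)=\binom{n-1}{2}-\binom{|B|-1}{2}=\binom{n-1}{2}-\binom{n-|A|-1}{2}=\frac{7}{16}\binom{n}{2}-O(n),
\]
where we used $|A|=n/4-O(1)$. As $\binom{n-1}{2}\ge\frac{7}{16}\binom{n}{2}$ once $n$ is not too small, this gives $\delta_1(\cH_3)\ge\frac{7}{16}\binom{n}{2}-O(n)$. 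This step is purely computational.

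The crux is showing that $\cH_3$ contains no loose Hamilton cycle, and this is exactly what the choice $|A|<n/4$ is engineered for. Suppose instead that it does; since $n$ is even such a cycle has exactly $m=n/2$ edges which, listed along the cyclic order, have the form $e_i=\{c_{i-1},w_i,c_i\}$ for $i=1,\dots,m$ with indices read modulo $m$, where $c_i$ is the single vertex shared by $e_i$ and $e_{i+1}$. Thus $V$ splits into the $m$ \emph{link} vertices $c_1,\dots,c_m$, each lying in exactly two edges of the cycle, and the $m$ \emph{middle} vertices $w_1,\dots,w_m$, each lying in exactly one. Set $A_c=A\cap\{c_1,\dots,c_m\}$ and $A_w=A\cap\{w_1,\dots,w_m\}$, so that $A_c$ and $A_w$ partition $A$. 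Each edge of the cycle meets $A$, being an edge of $\cH_3$; since a link vertex in $A$ can account for at most two such edges and a middle vertex in $A$ for at most one, a union bound yields
\[
m\ \le\ 2|A_c|+|A_w|\ =\ |A_c|+|A|\ \le\ 2|A|\ <\ \tfrac{n}{2}\ =\ m,
\]
a contradiction. Hence $\cH_3$ has no loose Hamilton cycle. The real content here is just the two observations that $\tfrac7{16}$ matches a vertex set of size $n/4$ and that a loose Hamilton cycle on $n$ vertices has $n/2$ edges but only $n/2$ vertices lying in two edges each; granting those, the argument is a single line of arithmetic, so I do not expect any serious obstacle beyond arriving at the construction.
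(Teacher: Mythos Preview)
Your proposal is correct and follows essentially the same approach as the paper: the identical construction $E(\cH_3)=\{e:e\cap A\neq\emptyset\}$ with $|A|$ just below $n/4$, the same degree computation (the paper writes it as $\binom{|A|}{2}+|A|(|B|-1)$ rather than $\binom{n-1}{2}-\binom{|B|-1}{2}$), and the same counting argument that any cycle has at most $2|A|<n/2$ edges since each edge meets $A$ and each vertex of $A$ lies in at most two edges of the cycle. Your distinction between link and middle vertices is a slight refinement but not needed, since the cruder bound $2|A|$ already suffices.
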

\begin{proof}
Consider the following $3$-uniform hypergraph $\cH_{3}=(V,E)$.
Let $A\dcup B=V$ be a partition of $V$ with $|A|=\lfloor\frac{n}{4}\rfloor-1$ and let $E$ be the set of all triplets
from~$V$  with at least one vertex in $A$. Clearly,
$\delta_{1}(\cH_{3})=\binom{|A|}2+|A|(|B|-1)=\frac{7}{16}\binom{n}2-O(n)$.
Now consider an arbitrary cycle in $\cH_{3}$. 
Note that every vertex, in particular every vertex from $A$, is contained
in at most two edges of this cycle. Moreover,  every
edge of the cycle must intersect $A$. Consequently, the cycle contains at most
$2|A|<n/2$ edges and, hence, cannot be a Hamilton cycle.
\end{proof}

We note that the construction $\cH_{3}$ in Fact~\ref{fact:lb} satisfies $\delta_2(\cH_{3})\geq n/4-1$ and
indeed,  the same construction proves that the minimum collective degree condition given 
in Theorem~\ref{thm:loosecyccodeg} is asymptotically best possible for the case $k=3$.

This leads to the following conjecture for minimum vertex degree conditions enforcing loose Hamilton cycles in $k$-uniform hypergraphs.
Let $k\geq 3$ and let $\cH_{k}=(V,E)$ be the $k$-uniform, $n$-vertex hypergraph on  $V=A\dcup B$ with $|A|=\frac{n}{2(k-1)}-1$. Let  $E$ consist of all 
$k$-sets intersecting $A$ in at least one vertex. Then 
$\cH_{k}$ does not contain a loose Hamilton cycle
and we believe that any $k$-uniform, $n$-vertex hypergraph $\cH$ which has minimum vertex degree $\delta_{1}(\cH)\geq \delta_{1}(\cH_{k})+o(n^2)$
contains a loose Hamilton cycle. Indeed, Theorem~\ref{thm:main} verifies this for the case $k=3$.

\section{Proof of the main result}
The proof of Theorem~\ref{thm:main}
will be given in Section~\ref{sec:prooftheorem}.  
It  uses
several auxiliary lemmas which we introduce in Section~\ref{sec:auxlemmas}.
We start with an outline of the proof.

\subsection{Outline of the proof}\label{sec:outline}
We will build a loose Hamilton cycle by connecting loose paths. 
Formally, a $3$-uniform hypergraph $\cP$ is a \emph{loose path}
if there is an ordering $(v_1,\dots,v_{t})$ of its vertices 
such that every edge consists of three consecutive vertices, every vertex is contained in an edge and two consecutive
edges intersect in exactly one vertex. 
The elements $v_1$ and $v_{t}$ are called the \emph{ends} of $\cP$.
 
The Absorbing Lemma (Lemma~\ref{lem:absorb})
asserts that  every $3$-uniform hypergraph $\cH=(V,E)$ with 
sufficiently large minimum vertex degree 
contains a  so-called \emph{absorbing} loose path~$\cP$, 
which has the following property: For every set $U\subset V\setminus V(\cP)$ with 
$|U|\in 2\NN$ and $|U|\leq \beta n$ (for some appropriate $0<\beta<\gamma$)
there exists a loose path $\cQ$ with the same ends as $\cP$, which covers precisely 
the vertices $V(\cP)\cup U$. 

The Absorbing Lemma reduces the problem of finding a loose Hamilton cycle 
to the simpler problem of finding an almost spanning loose cycle, which contains 
the absorbing path $\cP$ and covers at least $(1-\beta)n$ of the vertices.
We approach this simpler problem as follows. Let $\cH'$ be the induced subhypergraph $\cH$,
which we obtain after removing the vertices of the absorbing path $\cP$ guaranteed by the Absorbing Lemma.
We remove from~$\cH'$ a 
``small'' set~$R$ of  vertices, called the \emph{reservoir} (see Lemma~\ref{lem:reservoir}),
which has the property that many loose paths can be connected to one loose cycle by using the vertices of $R$ only.

Let $\cH''$ be the remaining hypergraph after removing the vertices from $R$. 
We will choose~$\cP$ and $R$ small enough, so that $\delta_1(\cH'')\geq (\frac{7}{16}+o(1))|\binom{V(\cH'')|}{2}$.
The third auxiliary lemma, the Path-tiling Lemma (Lemma~\ref{lem:pathtiling}), asserts that all but $o(n)$ vertices of
$\cH''$ can be covered by a family of pairwise disjoint loose paths and, moreover, the number of those paths will
be constant (independent of $n$). Consequently, we can connect those paths and~$\cP$ to form a loose cycle by using 
exclusively vertices from $R$. This way we obtain a loose cycle in $\cH$, which covers all but the $o(n)$
left-over vertices from $\cH''$ and some left-over vertices from $R$. We will ensure that the number of those yet 
uncovered vertices will be smaller than $\beta n$ and, hence, we can appeal to the absorption property of $\cP$ and 
obtain a Hamilton  cycle.

As indicated earlier, among the auxiliary lemmas mentioned above the Path-tiling Lemma
is the only one for which the full strength 
of the condition $(\frac{7}{16}+o(1))\binom{n}{2}$ is required and indeed, we consider Lemma~\ref{lem:pathtiling} to be the main obstacle to proving Theorem~\ref{thm:main}.
For the other lemmas we do not attempt to optimise 
the constants.

\subsection{Auxiliary lemmas}
\label{sec:auxlemmas}

In this section we introduce the technical lemmas needed for the proof of the main theorem. 

We start with the connecting lemma which is used to connect several ``short'' loose paths to a long one.
Let $\cH$ be a $3$-uniform hypergraph and $(a_i,b_i)_{i\in[k]}$ a set consisting of~$k$ mutually disjoint pairs of vertices.
 We say that a set of triples $(x_i,y_i,z_i)_{i\in[k]}$  \emph{connects} $(a_i,b_i)_{i\in[k]}$ if 
\begin{itemize}
\item $\big|\bigcup_{i\in[k]}\{a_i,b_i,x_i,y_i,z_i\}\big|=5k$, i.e.\ the pairs and triples are all disjoint,
\item  for all $i\in[k]$ we have $\{a_i,x_i,y_i\}, \{y_i,z_i,b_i\}\in \cH$.
\end{itemize}
Suppose that  $a$ and $b$ are ends of two disjoint loose paths not intersecting  $\{x,y,z\}$ and suppose that $(x,y,z)$ connects $(a,b)$. 
Then this connection would join the two paths to one loose path.
The following lemma states that several paths can be connected, provided the minimum vertex degree is sufficiently large.

\begin{lemma}[Connecting lemma]
\label{lem:connecting}
Let  $\gamma>0$, let $m\geq 1$ be an integer, and
let $\cH=(V,E)$ be a $3$-uniform hypergraph on~$n$ vertices with 
$\delta_1(\cH)\geq \left(\frac14+\gamma\right)\binom{n}2$ and $n\geq \gamma m/12$.

For every set $(a_i,b_i)_{i\in [m]}$ of mutually disjoint pairs of distinct vertices,
there exists a set of triples $(x_i,y_i,z_i)_{i\in[m]}$  connecting $(a_i,b_i)_{i\in[m]}$.
\end{lemma}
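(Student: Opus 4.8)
The plan is to build the connecting triples \emph{one pair at a time}, greedily. Suppose that for every $j<i$ a triple $(x_j,y_j,z_j)$ has already been fixed so that $\{a_j,x_j,y_j\},\{y_j,z_j,b_j\}\in\cH$ and so that all vertices named so far are pairwise distinct, and write
\[
F \;=\; \{a_j,b_j\colon j\in[m]\}\ \cup\ \{x_j,y_j,z_j\colon j<i\},
\]
so that $|F|\le 2m+3(i-1)\le 5m-3$ and $F$ is exactly the set of vertices to be avoided when treating the pair $(a_i,b_i)$. It therefore suffices to exhibit, at each step $i$, three distinct vertices $x_i,y_i,z_i\in V\setminus F$ with $\{a_i,x_i,y_i\}\in\cH$ and $\{y_i,z_i,b_i\}\in\cH$; performing this for $i=1,\dots,m$ proves the lemma, since then the $5m$ vertices $a_i,b_i,x_i,y_i,z_i$ are all distinct and the required edges are present.

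To carry out one step I would pass to the \emph{link graphs} of $a_i$ and $b_i$, that is, to the graph $L_a$ on $V$ with edge set $N(a_i)$ and the graph $L_b$ with edge set $N(b_i)$; by the degree hypothesis $e(L_a)=\deg(a_i)\ge\big(\frac14+\gamma\big)\binom n2$, and likewise for $L_b$. The crucial claim is that the vertex sets
\[
S=\{v\in V\colon d_{L_a}(v)\ge |F|+2\}\qquad\text{and}\qquad T=\{v\in V\colon d_{L_b}(v)\ge |F|+2\}
\]
both have more than $n/2$ elements. Indeed, the number of edges of $L_a$ with an endpoint outside $S$ is at most $\sum_{v\notin S}d_{L_a}(v)\le n(|F|+1)\le 5mn$, which by the assumption relating $n$, $m$ and $\gamma$ is below $\frac{\gamma}2\binom n2$; hence $L_a[S]$ has at least $\big(\frac14+\frac{\gamma}2\big)\binom n2$ edges, and since $L_a[S]$ is a graph on $|S|$ vertices this forces $\binom{|S|}2\ge\big(\frac14+\frac{\gamma}2\big)\binom n2$, i.e.\ $|S|\ge\big(\frac12+\gamma'\big)n$ for some constant $\gamma'=\gamma'(\gamma)>0$ once $n$ is large. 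The same bound holds for $T$.

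Because $|S|+|T|>n$ we get $|S\cap T|\ge|S|+|T|-n\ge 2\gamma' n$, which exceeds $|F|$ for $n$ large, so I may pick $y_i\in(S\cap T)\setminus F$. Now $d_{L_a}(y_i)\ge|F|+2>|F|$ permits the choice of $x_i\in N_{L_a}(y_i)\setminus F$, and then $d_{L_b}(y_i)\ge|F|+2>|F\cup\{x_i\}|$ permits the choice of $z_i\in N_{L_b}(y_i)\setminus(F\cup\{x_i\})$. The vertices $x_i,y_i,z_i$ are pairwise distinct ($x_iy_i$ and $y_iz_i$ are edges, and $x_i\ne z_i$ by construction) and lie outside $F$, while $x_iy_i\in L_a$ and $y_iz_i\in L_b$ unwind to $\{a_i,x_i,y_i\}\in\cH$ and $\{y_i,z_i,b_i\}\in\cH$. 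Updating $F$ and repeating completes the construction.

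The only step carrying any content is the size estimate $|S|,|T|>n/2$, and this is exactly where the threshold $\frac14$ is used: it is what makes two subsets of $V$ each of size exceeding $n/2$ forced to intersect. Everything else — bounding $|F|$ and checking disjointness across the $m$ rounds — is elementary bookkeeping, so no argument of the depth of the Path-tiling Lemma is needed here, and indeed the constants are not optimised.
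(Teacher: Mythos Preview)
Your argument follows the paper's framework exactly: build the triples greedily, and at each step exploit that the link graphs $L_{a_i}$ and $L_{b_i}$ retain more than $\tfrac14\binom n2$ edges after the forbidden vertices are accounted for. The only difference is cosmetic: the paper locates the middle vertex $y_i$ by contradiction (if no connecting triple exists in $U=V\setminus F$, then every vertex non-isolated in $L_a[U]$ has at most one neighbour in $L_b[U]$, forcing $e(L_b[U])<\binom{\lfloor n/2\rfloor}{2}+n$), while you find $y_i$ directly by intersecting the sets $S$, $T$ of high-degree vertices. Both routes rest on the same ``more than $\tfrac14\binom n2$ edges $\Rightarrow$ fewer than $n/2$ low-degree vertices'' observation.

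There is one quantitative slip. With the intended hypothesis $m\le\gamma n/12$ one has $5mn\le\tfrac{5\gamma}{12}n^2\approx\tfrac{5\gamma}6\binom n2$, not $\tfrac\gamma2\binom n2$ as you claim; so in fact $e(L_a[S])\ge(\tfrac14+\tfrac\gamma6)\binom n2$, giving only $|S|,|T|\ge(\tfrac12+\tfrac\gamma6+o(1))n$ and hence $|S\cap T|\gtrsim\tfrac{\gamma n}3$. This falls just short of $|F|\le\tfrac{5\gamma n}{12}$, so the step ``$|S\cap T|>|F|$'' does not close. The fix is immediate: restrict both link graphs to $U=V\setminus F$ from the outset (as the paper does) and define $S,T\subseteq U$ with degree threshold~$2$; then $S\cap T$ is already disjoint from $F$ and you merely need $S\cap T\ne\emptyset$, which the same calculation gives. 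Alternatively, strengthen the standing hypothesis to, say, $m\le\gamma n/20$; this is harmless, since every application of the lemma in the paper uses far smaller~$m$.
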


\begin{proof}
We will find the triples $(x_i,y_i,z_i)$ to connect $a_i$ with $b_i$ for $i\in[m]$
inductively as follows.
Suppose, for some $j<k$ the triples $(x_i,y_i,z_i)$ with $i<j$ are constructed so far 
and for $(a,b)=(a_j,b_j)$ we want to find a triple $(x,y,z)$ to connect $a$ and $b$. 
Let
\[
	U=V\setminus \bigg(\bigcup_{i=1}^m\{a_i,b_i\}\cup\bigcup_{i=1}^{j-1} \{x_i,y_i,z_i\}\bigg)
\] 
and for a vertex $u\in V$ let  $L_u=(V\setminus\{u\},E_u)$ be the \emph{link graph} of $v$ defined by
\[ 
	E_u=\{vw\colon uvw\in E(\cH)\}\,.
\]
We consider $L_a[U]$ and $L_b[U]$, the subgraphs of $L_a$ and $L_b$	induced on $U$. Owing to the minimum degree condition of $\cH$ and to the assumption $m\leq \gamma n/12$, we have
\begin{equation}\label{eq:lem:conn}
	e(L_a[U])\geq \left(\frac{1}{4}+\gamma\right)\binom{n}{2}-5m(n-1)\geq \left(\frac{1}{4}+\frac{\gamma}{6}\right)\binom{n}{2}
\end{equation}
and the same lower bound also holds for $e(L_b[U]))$. Note that any pair of edges $xy\in L_a[U]$ and $yz\in L_b[U]$
with $x\neq z$ leads to a connecting triple $(x,y,z)$ for $(a,b)$. Thus, if no connecting triple exists, then
for every vertex $u\in U$ one of the following must hold: either $u$ is isolated in $L_a[U]$ or $L_b[U]$
or it is adjacent to exactly one vertex $w$ in both graphs $L_a[U]$ and $L_b[U]$. In other words, any vertex not isolated in 
$L_a[U]$ has at most one neighbour in $L_b[U]$.
Let $I_a$ be the set of isolated vertices in $L_a[U]$. Since $e(L_a[U])>\frac{1}{4}\binom{n}{2}$ we have $|I_a|<n/2$. Consequently,
\begin{align*}
e(L_b[U])&\leq \binom{|I_a|}{2}+|\{uw\in E(L_b[U])\colon u\in U\setminus I_a\}| \\
	&\leq \binom{|I_a|}{2}+(|U|-|I_a|)
	<\binom{\lfloor n/2\rfloor}{2}+n\,.
\end{align*}
Using $\gamma\leq 3/4$ and $n\geq \gamma/12$ we see that this upper bound 
violates the lower bound on~$e(L_b[U])$ from~\eqref{eq:lem:conn}. 
\end{proof}

When connecting several paths to a long one we want to make sure that the vertices used for the connection  
all come from a small set, called reservoir, which is disjoint to the paths. 
The existence of such a set is guaranteed by the following.

\begin{lemma}[Reservoir lemma]
\label{lem:reservoir}
For all $0<\gamma <1/4$  there exists an $n_0$ such that for every $3$-uniform hypergraph $\cH=(V,E)$ on $n>n_0$ vertices with minimum vertex degree
$\delta_1(\cH)\geq\left( \frac{1}{4}+\gamma\right)\binom{n}{2}$ there is a set $R$ of size at most $\gamma n$ with the following property: For every system
$(a_i,b_i)_{i\in [k]}$ consisting of  $k\leq \gamma^3n/12$ mutually disjoint pairs of vertices  from~$V$ there is a triple system connecting $(a_i,b_i)_{i\in[k]}$ which, moreover,
contains  vertices from $R$ only.
\end{lemma}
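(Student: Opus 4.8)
The plan is to take $R$ at random and then verify that inside $R$ the hypergraph still satisfies the hypothesis of the Connecting lemma (Lemma~\ref{lem:connecting}) with a slightly smaller parameter; applying that lemma to the subhypergraph induced by $R$ together with the given pairs will then produce the connecting triples \emph{within} $R$ for free.

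Concretely, set $p=\gamma/2$ and let $R$ be the random subset of $V$ in which each vertex is included independently with probability $p$. (Recall that for $v\in V$ the link graph $L_v$ has vertex set $V\setminus\{v\}$ and edge set $\{xy:vxy\in\cH\}$, so the hypothesis reads $e(L_v)\ge(\tfrac14+\gamma)\binom n2$ for every $v$.) First I would show that, for $n$ large, with probability $1-o(1)$ the set $R$ enjoys the two properties \emph{(a)}~$\tfrac14\gamma n\le|R|\le\gamma n$ and \emph{(b)}~$e(L_v[R])\ge(\tfrac14+\tfrac\gamma2)\binom{|R|}2$ for every $v\in V$. Property~\emph{(a)}, together with the estimate $\binom{|R|}2=(1+o(1))p^2\binom n2$, follows from Chernoff's inequality applied to $|R|\sim\Bin(n,p)$, whose mean is $\gamma n/2$. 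For~\emph{(b)}, fix $v$: since $\ex{e(L_v[R])}=p^2\,e(L_v)\ge(\tfrac14+\gamma)p^2\binom n2$, it is enough that $e(L_v[R])$ lie within a $(1-o(1))$ factor of its mean with probability $1-o(1/n)$, because a union bound over the $n$ choices of $v$ then finishes the claim. Regarding $e(L_v[R])$ as a function of the independent indicators $\mathbf{1}[w\in R]$, $w\in V$, flipping any one of them changes the value by at most $\deg_{L_v}(w)\le n$, so McDiarmid's bounded-differences inequality gives $\pr{|e(L_v[R])-\ex{e(L_v[R])}|>\eps p^2n^2}\le 2\exp(-2\eps^2p^4n)$, which comfortably beats the union bound. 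Hence some $R$ satisfying~\emph{(a)} and~\emph{(b)} exists; I fix such an $R$, noting that $|R|\le\gamma n$ as demanded.

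Next, given a system $(a_i,b_i)_{i\in[k]}$ with $k\le\gamma^3n/12$ (the case $k=0$ being trivial), I would set $\cH^{\ast}=\cH[R']$, where $R'=R\cup\bigcup_{i\in[k]}\{a_i,b_i\}$ and $N:=|R'|\le|R|+2k$. For every $v\in R'$ we have $\deg_{\cH^{\ast}}(v)=e(L_v[R'])\ge e(L_v[R])\ge(\tfrac14+\tfrac\gamma2)\binom{|R|}2$ by~\emph{(b)}. From~\emph{(a)} and $k\le\gamma^3n/12$ one gets $2k\le\tfrac23\gamma^2|R|$, hence $N\le(1+\tfrac23\gamma^2)|R|$ and $\binom N2\le(1+2\gamma^2)\binom{|R|}2$; a short computation using $\gamma<1/4$ then yields $(\tfrac14+\tfrac\gamma2)\binom{|R|}2\ge(\tfrac14+\tfrac\gamma8)\binom N2$. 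So $\delta_1(\cH^{\ast})\ge(\tfrac14+\gamma')\binom N2$ with $\gamma':=\gamma/8$, and in addition $N\ge|R|\ge\tfrac14\gamma n\ge\gamma'k/12$. Lemma~\ref{lem:connecting}, applied to $\cH^{\ast}$ with parameters $\gamma'$ and $m=k$, therefore supplies triples $(x_i,y_i,z_i)_{i\in[k]}$ connecting $(a_i,b_i)_{i\in[k]}$; as these $5k$ vertices are pairwise distinct and all lie in $V(\cH^{\ast})$, the triple vertices in fact lie in $V(\cH^{\ast})\setminus\bigcup_j\{a_j,b_j\}=R$, which is exactly what is claimed.

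The single point that needs genuine care is property~\emph{(b)}: the reservoir has to inherit the link-graph edge density of $\cH$ \emph{uniformly over all vertices} $v$, which is precisely what the bounded-differences inequality together with the union bound deliver. Everything after that is bookkeeping, and following the proof outline I would not attempt to optimise the constants.
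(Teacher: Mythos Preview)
Your proof is correct and follows the same strategy as the paper: choose $R$ at random, show by concentration that every vertex retains a $(\tfrac14+\Omega(\gamma))$-fraction of its link edges inside $R$, and then invoke the Connecting Lemma on $\cH[R\cup\{a_i,b_i\}]$. The only difference is the concentration tool: the paper decomposes each link graph $L_v$ into at most $n$ matchings so that the number of surviving edges in each matching is genuinely binomial, and then sums Chernoff bounds; you instead apply McDiarmid's bounded-differences inequality directly to $e(L_v[R])$, which is a cleaner route to the same conclusion and avoids the matching decomposition entirely.
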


\begin{proof}
We shall show that a random set~$R$ has the required properties with positive probability.
For this proof we use some exponential tail estimates. Here we will follow a basic technique 
described in~\cite{JLR_randomgraphs}*{Section~2.6}. Alternatively Lemma~\ref{lem:reservoir}
could be deduced more directly from Janson's inequality.

For given $0<\gamma<1/4$ let $n_0$ be sufficiently large.
Let $\cH$ be as stated in the lemma and $v\in V(\cH)$. Let $L(v)$ be the link graph   defined on the vertex set $V(\cH)\setminus\{v\}$, having
the edges $e\in E(L(v))$ if $e\cup\{v\}\in \cH$. Note that $L(v)$ contains $\deg_{\cH}(v)$ edges.
Since the edge set of the omplete graph $K_n$ can be decomposed into $n-1$ edge disjoint matchings,
we can decompose the edge set of $L$ into  $i_0=i_0(v)<n$ pairwise edge disjoint matchings.
We denote these matchings by $M_1(v),\dots, M_{i_0}(v)$. 

We randomly choose a vertex set $V_{p}$ from $V$ by including each vertex $u\in V$ into~$V_p$ with probability $p=\gamma-\gamma^3$ independently. 
For every $i\in[i_0]$ let 
\[
	X_i(v)=\Big|M_i(v)\cap\binom{V_{p}}2\Big|
\] 
denote the number of edges $e\in M_i(v)$ contained in $V_{p}$. This way $X_i(v)$ is a binomially distributed random variable
with parameters $|M_i(v)|$ and $p^{2}$.
Using  the following Chernoff bounds for $t>0$ (see, e.g.,~\cite{JLR_randomgraphs}*{Theorem~2.1})
\begin{align}
\label{eq:chernoffut}
\pr{\Bin(m,\zeta)\geq m\zeta+ t}&< e^{-t^2/(2\zeta m+t/3)}\\
\label{eq:chernofflt}
\pr{\Bin(m,\zeta)\leq m\zeta- t}&< e^{-t^2/(2\zeta m)}
\end{align}
we see that 
\begin{equation}
\label{eq:reservoirsize}
\gamma \frac{n}2\leq|V_p|\leq p n+(3 n\ln 20)^{1/2}\leq\gamma n-2k
\end{equation} with probability at least $9/10$. 

Further, using \eqref{eq:chernofflt} and $|M_i(v)|\leq n/2$ we see that with probability at most~$n^{-2}$ there exists an index  $i\in[i_0]$
such that $X_i(v)\leq |M_i(v)|p^2-(3n \ln n)^{1/2}$. 
Using $\sum_{i\in[i_0]}|M_i(v)|=\deg_{\cH}(v)$ and recalling that  $\deg_{V_{p}}(v)$ denotes the degree of $v$ in $\cH[V_p\cup\{v\}]$ we obtain that 
\begin{equation}
\label{eq:reservoirdeg}
\deg_{V_{p}}(v)=\sum_{i\in[i_0]}X_i(v)\geq p^2\deg_{\cH}(v)-n (3n \ln n)^{1/2}\,,
\end{equation}
holds with probability at least $1-n^{-2}$.

Repeating the same argument for every vertex $v\in V$ we infer from the union bound 
that~\eqref{eq:reservoirdeg} holds for all vertices $v\in V$ simultaneously with probability 
at least $1-1/n$.
Hence, with positive probability we obtain a set $R$ satisfying \eqref{eq:reservoirsize} and 
\eqref{eq:reservoirdeg} for all $v\in V$.

Let $(a_i,b_i)_{i\in[k]}$ be  given and let $S=\bigcup_{i\in[k]}\{a_i,b_i\}$.
Then we have $|R\cup S|\leq \gamma n$ and
\[
\deg_{R\cup S}(v)\geq \deg_{R}(v)\geq \left(\frac14+\gamma^2\right)\binom{\gamma n}2\geq \left(\frac14+\gamma^2\right)\binom{|R\cup S|}2
\]
for all $v\in V$.
Thus, we can appeal to the Connecting Lemma (Lemma~\ref{lem:connecting}) to obtain a triple system
which connects $(a_i,b_i)_{i\in[k]}$ and  which consists of vertices from $R$ only.
\end{proof}

Next, we introduce the Absorbing Lemma which asserts the existence of a ``short'' but powerful loose path $\cP$ which can absorb any small set $U\subset V\setminus V(\cP)$.
In the following note that $\left(\frac58\right)^2<\frac7{16}$.
\begin{lemma}[Absorbing lemma]
\label{lem:absorb}
For all $\gamma>0$ there exist $\beta>0$ and $n_0$ such that the following holds.
Let $\cH=(V,E)$ be a $3$-uniform hypergraph on $n>n_0$ vertices which satisfies $\delta_1(\cH)\geq\left(\frac58+\gamma\right)^2\binom{n}2$.
Then there is a loose path $\cP$ with $|V(\cP)|\leq \gamma^{7}n$ such that for all subsets $U\subset V\setminus V(\cP)$ of size
at most $\beta n$ and $|U|\in 2\NN$ there exists a loose path~$\cQ\subset \cH$ with $V(\cQ)=V(\cP)\cup U$ and $\cP$ and $\cQ$ have
exactly the same ends.
\end{lemma}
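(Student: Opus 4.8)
The plan is to use the absorbing technique in its standard form: for each pair $\{x,y\}$ of vertices we define a family of ``absorbing gadgets'' --- short loose paths that can swallow $x$ and $y$ while keeping their ends fixed --- show that a random collection of disjoint gadgets has, with positive probability, the property that \emph{every} pair $\{x,y\}$ has many gadgets available to it, and then connect all the chosen gadgets into one loose path $\cP$ using the Connecting Lemma (Lemma~\ref{lem:connecting}).

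\medskip

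First I would make precise what a single absorbing step looks like. Since a loose path alternates between ``inner'' vertices (of degree $2$ on the path) and ``link'' vertices, the natural gadget absorbing a \emph{pair} $\{x,y\}$ is a loose path on a constant number of vertices, say with ends $p,q$, that can be rerouted through $x$ and $y$ so that the new path still has ends $p,q$ but covers two more vertices. Concretely, a convenient choice is an \emph{absorbing $4$-tuple} $(p,v,w,q)$: the edges $pvx$, $xwy$, $ywq$ would be replaced by... --- more cleanly, I would use the fact that from any ordered $4$-set $(p_1,p_2,p_3,p_4)$ forming a configuration with $p_1p_2p_3,\ p_3p_4p_5\in\cH$ one can insert a pair; the cleanest formulation is that a triple of vertices $(a,m,b)$ is \emph{absorbing} for $\{x,y\}$ if $amx,\ xby$ -- in any case, I would fix a notion so that: (i) an absorbing gadget for $\{x,y\}$ is a loose subpath on, say, $5$ or $7$ vertices with designated ends; (ii) in the ``off'' state it uses only its own vertices, and in the ``on'' state it additionally covers $x,y$, keeping the ends; (iii) the number of absorbing gadgets for a given pair is bounded below by the degree condition. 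The degree hypothesis $\delta_1(\cH)\ge(\tfrac58+\gamma)^2\binom n2$ is exactly what is needed for (iii): because $(\tfrac58+\gamma)^2$ is (slightly more than) the product of two factors each $\ge \tfrac58$, a counting argument shows that for every pair $\{x,y\}$ there are at least $c n^{t-2}$ ordered gadgets (where $t$ is the gadget size), for some $c=c(\gamma)>0$. This is the step where the specific constant $\tfrac58$ enters, and I would prove (iii) by a short double-counting / deletion argument over the common neighbourhoods of the relevant vertices.

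\medskip

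Next I would run the standard probabilistic selection. Choose a family $\cF$ of gadgets by picking each ordered $t$-tuple of distinct vertices independently with probability $p=\Theta(n^{1-t})$. Then $\ex{|\cF|}=\Theta(n)$; the number of \emph{intersecting} pairs of gadgets in $\cF$ has expectation $o(n)$; and for each pair $\{x,y\}$ the number of gadgets in $\cF$ absorbing it has expectation $\ge c' n$. By Markov / Chernoff (the same tail bounds as in Lemma~\ref{lem:reservoir}) these hold simultaneously with positive probability, so we can fix such an $\cF$, delete one gadget from each intersecting pair, and obtain a family $\cF'$ of pairwise disjoint gadgets with $|\cF'|\le \tfrac12\gamma^7 n/t$ (say), $|\cF'|$ is at most a constant times $n$, and every pair $\{x,y\}$ is absorbed by at least $\beta n$ gadgets of $\cF'$ for a suitable $\beta=\beta(\gamma)>0$. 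Here I would also throw away gadgets if necessary so that $|V(\cF')|\le \tfrac12\gamma^7 n$, leaving room for the connecting vertices.

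\medskip

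Finally I would connect. The family $\cF'$ consists of $k:=|\cF'|$ disjoint loose paths with ends $(a_i,b_i)_{i\in[k]}$; since $k$ is linear in $n$ but with a tiny constant, the hypotheses of the Connecting Lemma (which needs $\delta_1\ge(\tfrac14+\gamma)\binom n2$ and $n\ge \gamma m/12$, both amply satisfied) give a disjoint triple system connecting the $(a_i,b_i)$ in series into a single loose path $\cP$ with $|V(\cP)|\le 5k+ \text{(connecting vertices)} \le \gamma^7 n$. Now given any $U\subset V\setminus V(\cP)$ with $|U|\in 2\NN$ and $|U|\le\beta n$, pair up $U$ arbitrarily as $\{x_1,y_1\},\dots,\{x_{|U|/2},y_{|U|/2}\}$; greedily assign to each pair a distinct gadget of $\cF'$ absorbing it --- possible because each pair has $\ge\beta n\ge |U|/2$ available gadgets and we have assigned fewer than $|U|/2$ so far --- and switch those gadgets to the ``on'' state. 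This yields a loose path $\cQ$ with the same ends as $\cP$ and $V(\cQ)=V(\cP)\cup U$, as required. The main obstacle is the first block: setting up the gadget definition so that absorption genuinely preserves the loose-path structure and the ends, and proving the lower bound $\ge c n^{t-2}$ on the number of gadgets per pair from the degree condition --- this is where the value $\tfrac58$ (and hence $\tfrac7{16}$ in the main theorem) is pinned down; the probabilistic deletion and the invocation of the Connecting Lemma are routine by comparison.
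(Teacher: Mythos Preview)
Your outline is correct and follows the same scheme as the paper: define short absorbing gadgets for pairs, show every pair has many of them, select a random sparse family, delete intersecting pairs, and connect with Lemma~\ref{lem:connecting}. The paper's concrete gadget is the $7$-tuple $(v_1,\dots,v_7)$ with $v_1v_2v_3,\,v_3v_4v_5,\,v_5v_6v_7\in\cH$ and $v_2xv_4,\,v_4yv_6\in\cH$; the ``on'' state is the loose path $(v_1,v_3,v_2,x,v_4,y,v_6,v_5,v_7)$ with the same ends $v_1,v_7$. Your waffling between $4$-, $5$-, and $7$-vertex gadgets should be replaced by a single committed definition of this kind, since the rerouting has to be checked carefully to preserve the loose-path structure.

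The one place where your sketch is too optimistic is the counting step and your explanation of the constant. You write that $(\tfrac58+\gamma)^2$ works ``because it is the product of two factors each $\ge\tfrac58$'' and that a ``short double-counting / deletion argument'' suffices. In the paper this is Proposition~\ref{prop:counting}, and its heart (Claim~\ref{claim:countingD}) is not a simple product-of-link-degrees argument: one shows that for every pair $x,y$ there is a set $D$ of size $\gamma n$ such that every $d\in D$ has $\deg(x,d)\ge\gamma n$ and $\deg(y,d)\ge\tfrac38 n$ (or with $x,y$ swapped). The proof is an optimisation over the sizes of the ``bad'' sets $A(z)=\{d:\deg(z,d)<\gamma n\}$, and the extremum $a=b=\tfrac18$ is exactly what produces the threshold $(\tfrac58)^2$. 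Once such a $D$ is in hand, one picks $v_4\in D$, then $v_2\in N(x,v_4)$, then $v_3\in D(v_2,v_4)$, etc., getting at least $(\gamma n)^7/8$ absorbing $7$-tuples per pair. So your plan is right, but the ``short'' argument you allude to is precisely the non-routine part; the probabilistic selection and the use of the Connecting Lemma are, as you say, standard.
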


The principle used in the proof of Lemma~\ref{lem:absorb} goes back to R\"odl, Ruci\'nski, and Szemer\'edi. They introduced
the concept  of \emph{absorption}, which, roughly speaking, stands for a local extension of a given structure, which preserves the global structure.
In our context of loose cycle we say that a $7$-tuple $(v_1,\dots,v_7)$ \emph{absorbs} the two vertices
$x,y\in V$ if 
\begin{itemize}
\item $v_1v_2v_3,v_3v_4v_5,v_5v_6v_7\in \cH$ and 
\item $v_2xv_4$, $v_4yv_6\in\cH$ 
\end{itemize}are guaranteed. In particular, $(v_1,\dots,v_7)$  and $(v_1,v_3,v_2,x,v_4,y,v_6,v_5,v_7)$ both form loose paths which, moreover, have the same ends.
The proof of Lemma~\ref{lem:absorb} relies on the following result which states that for each pair of vertices there are many $7$-tuples absorbing this pair, provided
the minimum vertex degree of $\cH$ is sufficiently large.

\begin{proposition}
\label{prop:counting}
For every $\gamma\in(0,3/8)$ there exists an $n_0$ such that the following holds. 
Suppose $\cH=(V,E)$ is a $3$-uniform hypergraph on $n>n_0$ vertices
with $\delta_1(\cH)\geq \left(\frac58+\gamma\right)^2\binom{n}2$. Then for every pair of vertices $x,y\in V$ the number of $7$-tuples absorbing $x$ and $y$
is at least~$(\gamma n)^7/8$. 
\end{proposition}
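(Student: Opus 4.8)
The plan is to build the absorbing $7$-tuple $(v_1,\dots,v_7)$ greedily, one vertex at a time, always keeping track of how many choices remain. Recall that the defining conditions are $v_1v_2v_3,v_3v_4v_5,v_5v_6v_7\in\cH$ together with $v_2xv_4,v_4yv_6\in\cH$. The key observation is that the roles of $v_2,v_4,v_6$ are the ``heavy'' ones: $v_4$ must lie in the link of $x$ together with $v_2$, and in the link of $y$ together with $v_6$; so I would pin these down first. Abbreviate $\delta=(\tfrac58+\gamma)^2$ and note $\delta n/2 \le \deg(w) \le \binom n2$ for every vertex $w$, and that the link graph $L_w$ has at least $\delta\binom n2$ edges, so at least $(1-\sqrt{1-\delta})n \ge (\tfrac38 + \tfrac\gamma2)n$, say, of the vertices have $L_w$-degree at least $\tfrac\gamma2 n$ — more crudely, in any graph on $n$ vertices with $\ge \delta\binom n2 > \tfrac14\binom n2$ edges, all but fewer than $\tfrac34 n$ vertices are non-isolated and in fact a positive fraction have linear degree. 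The real leverage comes from the elementary fact that if $G$ is a graph on $n$ vertices with $e(G)\ge \delta\binom n2$, then for \emph{most} vertices $u$, $\deg_G(u)\ge (\text{something like }\tfrac\gamma2)n$; I would make this precise with a simple averaging/counting bound.

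Concretely, here is the order I would carry out the construction. First choose $v_4$: it must be a common neighbour-candidate, i.e.\ $v_4$ should have many neighbours in both $L_x$ and $L_y$. Since $e(L_x),e(L_y)\ge\delta\binom n2$, a counting argument shows there are $\Omega(n)$ — in fact at least, say, $\tfrac{3\gamma}4 n$ — choices of $v_4$ such that $\deg_{L_x}(v_4)$ and $\deg_{L_y}(v_4)$ are each at least $\tfrac\gamma2 n$ or so; this uses that at most a bounded fraction of vertices can have small degree in either link. Having fixed $v_4$, choose $v_2\in N_{L_x}(v_4)$ and $v_6\in N_{L_y}(v_4)$, avoiding collisions: this gives $\ge \tfrac\gamma2 n - O(1)$ choices for each. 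Now $v_4$ is also required to lie in edges $v_3v_4v_5$ of $\cH$, i.e.\ $v_3v_5\in L_{v_4}$, and $e(L_{v_4})\ge\delta\binom n2$, so there are $\ge \delta\binom n2 - O(n)$ ordered choices of the pair $(v_3,v_5)$ — but I also need $v_2v_3\in L_{\text{(third vertex)}}$... rather, $v_1v_2v_3\in\cH$ and $v_5v_6v_7\in\cH$. So after fixing $v_2,v_3,v_4,v_5,v_6$, I pick $v_1\in N_{L_{v_2}}(v_3)$ and $v_7\in N_{L_{v_6}}(v_5)$. The subtlety is that when I choose the pair $(v_3,v_5)$ from $L_{v_4}$ I must ensure $v_3$ still has many $\cH$-neighbours completing $v_1v_2v_3$ and likewise $v_5$ for $v_5v_6v_7$; concretely I need $\deg(v_2,v_3)\ge 1$ and $\deg(v_5,v_6)\ge 1$, but to get the counting bound I want these pair-degrees to be large. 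This is where the minimum \emph{vertex} degree has to be converted into many good \emph{pairs}: in $L_{v_4}$, all but $o(n^2)$ — or at worst a $(1-c)$-fraction — of edges $v_3v_5$ have the property that $v_3$ lies in $\ge \tfrac\gamma2 n$ edges of $\cH$ through $v_2$ (equivalently $v_3$ is a high-degree vertex of $L_{v_2}$), since $L_{v_2}$ itself has $\ge\delta\binom n2$ edges. So I restrict the choice of $(v_3,v_5)$ to the $\ge (\delta - o(1))\binom n2 \ge \tfrac{\gamma}{2}\binom n2$ pairs that are simultaneously good for both endpoints; a union-bound/inclusion–exclusion over the (bounded number of) bad conditions suffices since each bad condition kills only a bounded fraction.

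Multiplying the number of choices at each stage — roughly $(\tfrac{3\gamma}4 n)$ for $v_4$, $(\tfrac\gamma2 n - O(1))$ each for $v_2$ and $v_6$, $(c\gamma\binom n2)$ for the pair $(v_3,v_5)$, and $(\tfrac\gamma2 n - O(1))$ each for $v_1$ and $v_7$ — gives on the order of $\gamma^7 n^7$ absorbing $7$-tuples, and a careful bookkeeping of the constants (choosing the fractions generously, and $n_0$ large enough to swallow the $O(n^6)$ error terms coming from forbidding the $O(1)$ already-used vertices at each step) yields the stated bound $(\gamma n)^7/8$. \textbf{The main obstacle} is precisely the middle step: guaranteeing that after committing to $v_2,v_4,v_6$ one can still pick the pair $(v_3,v_5)$ inside the link $L_{v_4}$ so that $v_3$ and $v_5$ are not ``trapped'' as low-degree vertices in $L_{v_2}$ and $L_{v_6}$ respectively — i.e.\ that the several linear-sized ``good vertex'' sets and the quadratic-sized edge set $L_{v_4}$ intersect in a quadratic-sized set. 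Everything else is a routine greedy count; the degree hypothesis $\delta_1(\cH)\ge(\tfrac58+\gamma)^2\binom n2 > \tfrac14\binom n2$ is exactly strong enough that each ``bad'' set one must avoid has size a strictly-less-than-$1$ fraction of the relevant ground set, so finitely many of them cannot exhaust it.
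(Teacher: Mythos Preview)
Your overall strategy --- greedily building the tuple and counting choices at each step --- is exactly what the paper does, and you correctly isolate the crux: after committing to $v_2,v_4,v_6$, one must still find $v_3,v_5$ so that the pair-degrees $\deg(v_2,v_3)$ and $\deg(v_5,v_6)$ are linear. But your proposed resolution of this step does not work.

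Here is the arithmetic. For any vertex $w$ set $B_w=\{v:\deg_{L_w}(v)<\tau n\}$ with $\tau$ small. From $e(L_w)\le\binom{|V\setminus B_w|}{2}+|B_w|\tau n$ and $e(L_w)\ge\delta\binom{n}{2}$ with $\delta=(\tfrac58+\gamma)^2$ one gets only $|V\setminus B_w|\ge(\tfrac58+\gamma-o(1))n$, i.e.\ $|B_w|$ can be as large as $(\tfrac38-\gamma)n$. Now the ordered pairs $(v_3,v_5)$ with $v_3v_5\in L_{v_4}$ number $2e(L_{v_4})\ge 2\delta\binom{n}{2}$; those with $v_3\in B_{v_2}$ can number up to $|B_{v_2}|(n-1)$, and similarly for $v_5\in B_{v_6}$. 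The union bound leaves at least
\[
\Big[(\tfrac58+\gamma)^2-2(\tfrac38-\gamma)\Big]n^2
=\Big[-\tfrac{23}{64}+\tfrac{13}{4}\gamma+\gamma^2\Big]n^2,
\]
which is negative for small $\gamma$. So ``each bad condition kills only a bounded fraction, hence finitely many cannot exhaust the ground set'' is simply false here: two sets of size $0.37n$ can easily wipe out a graph of density~$0.39$.

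The paper repairs exactly this point with an \emph{asymmetric} degree claim (its Claim~\ref{claim:countingD}): for every pair $x,y$ there is a set $D(x,y)$ of $\gamma n$ vertices~$d$ with $\deg(x,d)\ge\gamma n$ \emph{and} $\deg(y,d)\ge\tfrac38 n$ (or the roles of $x,y$ swapped). The proof is a small quadratic optimisation, not an averaging bound; the asymmetry $(\gamma n$ versus $\tfrac38 n)$ is what makes the numbers close. The claim is then applied twice --- once to $(x,y)$ to pick $v_4$, once to $(v_2,v_4)$ to pick $v_3$ --- and the large side ($\tfrac38 n$) of each application is exactly what guarantees the later neighbourhoods intersect the remaining ``good'' sets. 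Your symmetric small-threshold scheme cannot substitute for this.
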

\begin{proof}
For given $\gamma>0$ we choose  $n_0=168/\gamma^7$. First we show the following.
\begin{claim}
\label{claim:countingD}
For every pair $x, y \in V(\cH)$ of vertices there exists a set $D =D(x,y)\subset V$ of size $|D| = \gamma n$  such that one of the following holds:
\begin{itemize}
\item  $\deg(x,d)\geq\gamma n$ and $\deg(y,d) \geq\frac38 n$  for all $d \in D$ or 
\item  $\deg(y,d)\geq \gamma n$ and $\deg(x,d) \geq \frac38 n$ for all $d\in D$.
\end{itemize}
\end{claim}

\begin{proof}[Proof of Claim~\ref{claim:countingD}]
By assuming the contrary there exist two vertices $x$ and $y$ such that no set~$D=D(x,y)$ fulfills Claim~\ref{claim:countingD}.

Let $A(z) = \lbrace d\in V \colon \deg(z,d) < \gamma n \rbrace$ and  let $a = {|A(x)|}/{n}$ and $b = {|A(y)|}/{n}$. Without loss of generality we assume $a \leq b$. 
There are at most  $(a+\gamma) n$ vertices $v \in V$ satisfying $\deg(y,v) \geq \frac{3}{8}n$. 
Let $B(y)=\{v\in V\colon \deg(y,v)\geq 3n/8 \}$ and note that $|B(y)|<(a+\gamma)n$.
Hence, the number of ordered pairs $(u,v)$ such that $u\in B(y)$ and $\{u,v,y\}\in \cH$ 
is at most
\[|B(y)|(n-|A(y)|)+|A(y)|\gamma n\leq (a+\gamma)(1-b)n^2+b\gamma n^2.\]
Consequently, with $2\deg(y)$ being the number of ordered pairs $(u,v)$ such that 
$\{u,v,y\}\in\cH$ we have 
\begin{align*}
\left(\left(\frac{5}{8}\right)^2 +\frac{9\gamma}{8}\right)n^2&\leq 2\deg(y)\leq  
(1-b-a-\gamma)\frac38n^2+(a+\gamma)(1-b)n^2 +2b\gamma n^2\\
&\leq \frac{n^2}{8}(5a-3b-8ab) + \frac{(3 + 8\gamma)n^2}{8},
\end{align*}

Hence, we obtain
\begin{align*}
\left(\left(\frac{5}{8}\right)^2 +\frac{9\gamma}{8}\right)n^2&\leq 2\deg(y)\leq 
\frac{3n^2}{8}(1-b) + (a + \gamma)\left(\frac{5}{8} - b\right)n^2+ 2b\gamma n^2 \\
&\leq \frac{n^2}{8}(5a-3b-8ab) + \frac{(3 + 8\gamma)n^2}{8},
\end{align*}
where in the last inequality we use the fact that $b\leq 3/8$ which is a direct consequence of the condition on $\delta_1(\cH).$
It is easily seen that this maximum is attained by $a = b= 1/8$, for which we would obtain 
\[\deg(y)\leq \left(\left(\frac{5}{8}\right)^2 +\gamma\right)n^2,\]
a contradiction.
\end{proof}

We continue the proof of Proposition~\ref{prop:counting}.
For a given pair $x,y\in V$  we will select the tuple $v_1,\dots,v_7$ such that the edges
\begin{itemize}
\item $v_1v_2v_3,v_3v_4v_5,v_5v_6v_7\in \cH$ and 
\item $v_2xv_4$, $v_4yv_6\in\cH$ 
\end{itemize}are guaranteed. Note that $(v_1,\dots,v_7)$ forms a loose path with the ends $v_1$ and $v_7$ and 
$(v_1,v_3,v_2,x,v_4,y,v_6,v_5,v_7)$ also forms a loose path with the same ends, showing that $(v_1,\dots,v_7)$ is indeed an absorbing tuple for the pair $a,b$.
Moreover, we will show that the number of choices for each $v_i$ will give rise to the number of absorbing tuples stated in the proposition.

First, we want to choose $v_4$ and let $D(x,y)$ be a set with the properties  stated in Claim~\ref{claim:countingD}. 
Without loss of generality we may assume that $|N(y,d)|\geq \frac38n$ for all $d\in D(x,y)$. Fixing some $v_4\in D(x,y)$
We choose $v_2\in N(x,v_4)$ for which there are $\gamma n$ choices. This gives rise to to hyperegde $v_2xv_4\in \cH$.
and applying  Claim~\ref{claim:countingD} to $v_2$ and $v_4$ we obtain a set $D(v_2,v_4)$ 
with the properties stated in  Claim~\ref{claim:countingD} and we choose $v_3\in D(v_2,v_4)$.
We choose $v_1\in N(v_2,v_3) $ to obtain the edge $v_1v_2v_3\in \cH$. Note that $|N(v_2,v_3)|\geq \gamma n$.
Next, we choose  $v_5 $ from  $N(v_3,v_4)$ which has size $|N(v_3,v_4)|\geq \gamma n$.  This gives rise to the edge $v_3v_4v_5\in \cH$.
We choose  $v_6$ from the set~$N(y,v_4)$ with the additional property that $\deg(v_5,v_6) \geq \gamma n/2$. Hence, we obtain~$v_4yv_6\in\cH$  and we claim that
there are at least $\gamma n/2$ such choices.
Otherwise at least  $(|N(y,v_4)|-\gamma n/2)$ vertices $v\in V$ satisfy $\deg(v_5,v) < \gamma n/2$, hence 
\[\deg(v_5) < \frac{3\gamma}{16}n^2 + \binom{(\frac{5}8+ \frac{\gamma}2)n}{2} < \delta(\cH),\]
which is a contradiction.
Lastly we choose $v_7\in N(v_5,v_6)$ to obtain the edge $v_5v_6v_7\in\cH$ which completes the absorbing tuple $(v_1,\dots,v_7)$.

The number of choices for $v_1,\dots,v_7$ is at least $(\gamma n)^7/4$ and there are at most $\binom72n^6$ choices such that $v_i=v_j$ for some $i\neq j$.
Hence, we obtain at least $(\gamma n)^7/8$ absorbing $7$-tuples for the pair $x,y$.
\end{proof}

With  Proposition~\ref{prop:counting} and the connecting lemma (Lemma~\ref{lem:connecting}) at hand the proof of the absorbing lemma follows a scheme 
which can be found in \cites{RRS3,loosecyc}. We choose a family $\cF$ of $7$-tuples by selecting each $7$-tuples with probability $p=\gamma^7 n^{-6}/448$ independently.  
Then, it is easily shown that with non-zero probability the family $\cF$ satisfies
\begin{itemize}
\item $|\cF|\leq \gamma^7n/12$,
\item for all pairs $x,y\in V$ there are at least $p\gamma^7n^7/16$ tuples in $\cF$ which absorbs $x,y$
\item the number of intersecting pairs of $7$-tuples in $\cF$ is at most $p\gamma^7 n^7/32$
\end{itemize}
We eliminate intersecting pairs of $7$-tuples by deleting one tuple
for each such pair.
By definition each for the remaining $7$-tuples $(v_1^i,\dots,v_7^i)_{i\in[k]}$ with $k\leq \gamma^7 n/12$ forms a loose path with ends $v_1^i$ and $v_7^i$ and appealing 
to Lemma~\ref{lem:connecting} we can connect them to one loose path
which can absorb any $p\gamma^7 n^7/32=\beta$ pairs of vertices, proving the lemma.
To avoid unnecessary calculations we omit the details here. \qed

The next lemma is the  main obstacle when proving Theorem~\ref{thm:main}.
It asserts that the vertex set of a $3$-uniform hypergraph $\cH$ with minimum vertex degree $\delta_1(\cH)\geq \left(\frac{7}{16}+o(1)\right)\binom{n}2$
can be almost perfectly covered by a constant number of vertex disjoint loose paths.
\begin{lemma}[Path-tiling lemma]
\label{lem:pathtiling}
For all $\gamma>0$ and $\alpha>0$ there exist integers $p$ and~$n_0$ such that for $n>n_0$ the following holds.
Suppose $\cH$ is a $3$-uniform hypergraph on $n$ vertices with minimum vertex degree 
\[\delta_1(\cH)\geq \left(\frac{7}{16}+\gamma\right)\binom{n}2.\]
Then there is a family of $p$ disjoint loose paths in $\cH$ which covers all but at most $\alpha n$ vertices of $\cH$.
\end{lemma}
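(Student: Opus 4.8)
The plan is to run the weak regularity method for $3$-uniform hypergraphs. First I would apply the weak hypergraph regularity lemma to $\cH$ with auxiliary constants $\eps\ll d\ll\gamma$, obtaining a vertex partition $V=V_0\cup V_1\cup\dots\cup V_t$ with $|V_0|\le\eps n$, $|V_1|=\dots=|V_t|=m$, $1/\eps\le t\le T_0(\eps)$, and all but at most $\eps\binom t3$ of the triples $\{V_i,V_j,V_k\}$ being $\eps$-regular. Let $\cR$ be the reduced $3$-graph on $[t]$ whose edges are the triples that are $\eps$-regular and have density at least $d$. A routine counting argument transfers the degree hypothesis: for all but an $o(1)$-fraction of the clusters $V_i$, a typical vertex $v\in V_i$ sends almost all of its $\ge(\tfrac7{16}+\gamma)\binom n2$ edges into $\eps$-regular triples of density $\ge d$ through $V_i$, since the edges lying in irregular triples, in triples of density $<d$, in triples meeting $V_0$, or with two endpoints inside one cluster number only $o(n^2)$ in total. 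Discarding the exceptional clusters, I may assume $\delta_1(\cR)\ge(\tfrac7{16}+\tfrac\gamma2)\binom t2$.

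The combinatorial heart of the argument — and the only place where the precise value $\tfrac7{16}$ is needed — is the following statement about the reduced graph: \emph{every $3$-graph $\cR$ on $t$ vertices with $\delta_1(\cR)\ge(\tfrac7{16}+c)\binom t2$ admits a family of at most $C=C(c)$ vertex-disjoint loose paths covering all but at most $\alpha't$ of its vertices.} That $\tfrac7{16}$ cannot be lowered is witnessed by Fact~\ref{fact:lb} read inside $\cR$: taking a set $S$ of $(\tfrac14-\beta)t$ ``$A$-clusters'' and all triples meeting $S$ gives $\delta_1(\cR)\ge(\tfrac7{16}-O(\beta))\binom t2$, while every edge meets $S$ and each cluster of $S$ lies on at most two edges of any loose-path family, so such a family covers at most $\approx 4|S|=(1-4\beta)t$ clusters. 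The natural tile is thus the loose path with two edges: one through a vertex of $S$ covers four non-$S$ clusters per $S$-cluster, matching the extremal ratio, whereas a matching would cover only three per $S$-cluster and hence would require a strictly larger degree threshold. To prove the statement I would argue by stability: if $\cR$ is not close to the extremal configuration then the degree condition forces a robust structure in which a greedy loose-path tiling does not get stuck with a large uncovered remainder, while if $\cR$ is close to the extremal configuration — a set $S$ of size $(\tfrac14+\Omega(c))t$ meeting almost every edge, with the complement almost independent and almost all $S$-incident triples present — then one tiles this almost-explicit structure directly, the slack $|S|-t/4=\Omega(ct)$ providing the room needed. Carrying out the ``far'' case and the interface between the two cases is where essentially all the difficulty of Lemma~\ref{lem:pathtiling} lies.

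Finally I would convert the $\cR$-tiling into loose paths in $\cH$. A loose path $i_1 i_2\dots i_r$ of $\cR$ — a sequence of clusters in which consecutive triples overlap in one cluster and every consecutive triple is $\eps$-regular of density $\ge d$ — is realised by a greedy embedding: one builds an actual loose path of $\cH$ running through $V_{i_1},\dots,V_{i_r}$, at each step choosing the next two new vertices from the link of the current joint restricted to the two appropriate clusters (which still has density $\ge d-\eps$, hence many pairs, among the as-yet-unused vertices as long as no cluster is nearly exhausted), avoiding the $\sqrt\eps$-fraction of vertices that are bad for the regularity estimates, and choosing at each joint which of its two clusters hosts it so that all clusters involved get used up at the same rate (a cluster lying in two consecutive $\cR$-triples being filled from both of its sub-paths). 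This covers all but $o(m)$ vertices of $V_{i_1}\cup\dots\cup V_{i_r}$ by a bounded number of loose paths of $\cH$. Taking the union over the at most $C$ paths of the $\cR$-tiling gives a bounded number $p$ of vertex-disjoint loose paths in $\cH$ whose uncovered complement — $V_0$, the discarded or uncovered clusters, and the $o(m)$ leftovers inside each used cluster — has at most $\alpha n$ vertices once $\eps,d,\alpha'$ were chosen small enough in terms of $\alpha$. Steps one and three are routine regularity bookkeeping; the whole weight of the lemma sits in the combinatorial tiling of $\cR$.
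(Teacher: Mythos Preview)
Your outer scaffolding --- regularity, tile the cluster hypergraph, convert back --- matches the paper's. The divergence is at the combinatorial core, and there your proposal is both genuinely different from the paper and incomplete.

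The paper does not tile the cluster hypergraph $\cK$ with loose paths, and it does not argue by stability. It tiles $\cK$ with copies of a fixed $8$-vertex gadget $\cM$ (edges $123$, $345$, $456$, $678$), chosen so that each tile splits into four sub-triples with exactly the $3{:}3{:}2$ size ratio required by the embedding lemma (Proposition~\ref{prop:pathinreg}) for the conversion. The $\cM$-tiling of $\cK$ (Lemma~\ref{lem:Mtiling}) is itself obtained via a \emph{second} pass of regularity combined with fractional $\hom(\cM)$-tilings: one fixes a largest $\cM$-tiling of the resulting cluster hypergraph, and if too many vertices remain uncovered one locates two of them which, together with two $\cM$-tiles, span a configuration from a finite family $\dL_{29}$; the case analysis of Lemma~\ref{lem:frachomextent} then produces a fractional tiling of weight strictly above $16$ on those $18$ vertices, which converts back to a strictly better integer tiling --- a contradiction, run through a supremum argument on the optimal deficiency. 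No extremal/non-extremal dichotomy appears.

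Your alternative has the gap exactly where you flag it. The statement you isolate in step two --- every $3$-graph on $t$ vertices with $\delta_1\ge(\tfrac7{16}+c)\binom t2$ admits a bounded loose-path tiling missing at most $\alpha' t$ vertices --- \emph{is} Lemma~\ref{lem:pathtiling}; the regularity wrapper does not reduce it, since any direct argument for $\cR$ applies verbatim to $\cH$. So the entire content of your proof is carried by the non-extremal branch, for which you offer only ``a greedy loose-path tiling does not get stuck'', and that is not an argument. A stability route may well be workable, but you would have to actually supply it; the paper's fractional-tiling machinery is its replacement for precisely this missing piece. Your conversion step is also thinner than advertised: winding a long $\cH$-path through an $\cR$-path while keeping joint and middle clusters balanced can be done, but it needs its own embedding lemma rather than a one-line greedy sketch --- the paper sidesteps this entirely by engineering $\cM$ so that the balancing becomes the explicit $3{:}3{:}2$ split.
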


The proof of Lemma~\ref{lem:pathtiling} uses the weak regularity lemma for hypergraphs and  will be given in Section~\ref{sec:pathtiling}.

\subsection{Proof of the main theorem}
\label{sec:prooftheorem}

In this section we give the proof of the main result, Theorem~\ref{thm:main}. The proof is based on the three auxiliary lemmas introduced in Section~\ref{sec:auxlemmas}
and follows the outline given in Section~\ref{sec:outline}.

\begin{proof}[Proof of Theorem \ref{thm:main}]
For given  $\gamma>0$ we apply the Absorbing Lemma (Lemma~\ref{lem:absorb}) with~$\gamma/8$ 
to obtain $\beta>0$ and $n_{\ref{lem:absorb}}$. 
We apply 
the Reservoir Lemma (Lemma~\ref{lem:reservoir}) for   
$\gamma'=\min\{\beta/ 3,\gamma/8\}$ to obtain $n_{\ref{lem:reservoir}}$ which is $n_0$ of Lemma~\ref{lem:reservoir}. 
Finally, we apply the Path-tiling Lemma (Lemma~\ref{lem:pathtiling}) with $\gamma/2$ and
$\alpha=\beta/3$  to obtain $p$ and
$n_{\ref{lem:pathtiling}}$. The $n_0$ of Theorem~\ref{thm:main} is chosen by 
\[
	n_0=\max\{n_{\ref{lem:absorb}},2n_{\ref{lem:reservoir}},2n_{\ref{lem:pathtiling}},24(p+1)/\gamma'^{\,3}\}.
\]

Now let $n\geq n_0$, $n\in2\NN$ and let $\cH=(V,E)$ be a $3$-uniform hypergraph
on~$n$  vertices with
\[\delta_1(\cH)\geq \left(\frac7{16}+\gamma\right)\binom{n}2.\]
Let $\cP_0\subset \cH$ be the absorbing path guaranteed by Lemma~\ref{lem:absorb}. Let $a_0$ and 
$b_0$ be the ends of~$\cP_0$ and note that 
\[
|V(\cP_0)|\leq \gamma'n<\gamma n/8\,.
\]
Moreover, the path $\cP_0$ has the
absorption property, i.e.,
for all  $U\subset V\setminus V(\cP_0)$  with  $|U|\leq \beta n$ \ and  $|U|\in2\NN$  there exists 
\begin{equation}
\label{eq:absorb}
\text{a loose path } \cQ\subset \cH \text{ s.t.\ }
V(\cQ)=V(\cP_0)\cup U \text{ and }\cQ \text{ has the ends } a_0 \text{ and } b_0.
\end{equation}
Let $V'=(V\setminus V(\cP_0))\cup\{a_0,b_0\}$ and let $\cH'=\cH[V']=(V',E(\cH)\cap \binom{V'}{3})$ be the
induced subhypergraph of $\cH$ on $V'$.
Note that $\delta_1(\cH')\geq (\frac7{16}+\frac34\gamma)\binom{n}2$.

Due to Lemma~\ref{lem:reservoir} we can choose a set $R\subset V'$ of 
size at most $\gamma' |V'|\leq \gamma' n$ such that 
for every system
consisting of  at most $(\gamma')^3|V'|/12$ mutually disjoint pairs of vertices  from $V$
can be connected using vertices from $R$ only. 

Set $V''=V\setminus (V(\cP_0)\cup R)$ and let $\cH''=\cH[V'']$ be the induced
subhypergraph of $\cH$ on~$V''$.
Clearly, 
\[\delta(\cH'')\geq \left(\frac7{16}+\frac{\gamma}2\right)\binom{n}{2}\]
Consequently, Lemma~\ref{lem:pathtiling} applied to $\cH''$ 
(with $\gamma_{\ref{lem:pathtiling}}$ and $\alpha$)
yields a loose path tiling of $\cH''$
which covers all but at most $\alpha|V''|\leq \alpha n$
vertices from $V''$ and which consists of at most $p$ paths. 
We denote the set of the uncovered vertices in $V''$ by $T$. 
Further, 
let $\cP_1,\cP_2\dots,\cP_{q}$ with $q\leq p$ denote the paths of the tiling.
By applying the reservoir lemma appropriately 
we connect the loose paths $\cP_0,\cP_1,\dots,\cP_q$ to one loose cycle $\cC\subset \cH$.

Let $U=V\setminus V(\cC)$ be the set of vertices not covered by the cycle $\cC$.
Since $U\subseteq R\cup T$ we have
$|U|\leq (\alpha+\gamma_{\ref{lem:reservoir}})n\leq\beta n$. 
Moreover, since $\cC$ is a loose cycle and $n\in2\NN$ we have $|U|\in2\NN$. 
Thus, using  the absorption property of $\cP_0$ (see~\eqref{eq:absorb}) we 
can replace the subpath~$\cP_0$ in $\cC$ by a path $\cQ$ (since $\cP_0$ and $\cQ$ have the same ends) 
and since $V(\cQ)=V(\cP_0)\cup U$ the resulting cycle  is  a loose Hamilton cycle of $\cH$. 
\end{proof}

\section{Proof of the   Path-tiling Lemma}
\label{sec:pathtiling}
In this section we give the proof of  the Path-tiling Lemma, Lemma~\ref{lem:pathtiling}.
Lemma~\ref{lem:pathtiling} will be  derived from the following lemma.
Let $\cM$ be the $3$-uniform hypergraph defined on the vertex set $\{1,\dots,8\}$ with the edges $123,345,456,678\in \cM$.
We will show that the condition $\delta_1(\cH)\geq \left(\frac{7}{16}+o(1)\right)\binom{n}2$ will ensure an almost perfect 
\emph{$\cM$-tiling of $\cH$}, i.e., a family of vertex disjoint copies of $\cM$, which covers almost all vertices.

\begin{lemma}
\label{lem:Mtiling}
For all $\gamma>0$ and $\alpha>0$ there exists $n_0$ such that the following holds.
Suppose $\cH$ is a $3$-uniform hypergraph on $n>n_0$ vertices with minimum vertex degree 
\[\delta_1(\cH)\geq \left(\frac{7}{16}+\gamma\right)\binom{n}2.\]
Then there is an  $\cM$-tiling of $\cH$ which covers all but at most $\alpha n$ vertices of $\cH$.
\end{lemma}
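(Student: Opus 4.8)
The plan is to prove Lemma~\ref{lem:Mtiling} via the weak (Chernfiel--R\"odl) regularity lemma for $3$-uniform hypergraphs, exactly as one proves tiling results for graphs from the Szemer\'edi regularity lemma. First I would apply the weak regularity lemma to $\cH$ to obtain a partition $V=V_0\dcup V_1\dcup\cdots\dcup V_t$ into a bounded number of parts of equal size (with $|V_0|$ negligible) so that all but a $\eps$-fraction of the triples $(V_i,V_j,V_k)$ are $\eps$-regular, and then pass to the \emph{cluster hypergraph} $\cK$ on vertex set $[t]$ whose edges are the triples $\{i,j,k\}$ that are $\eps$-regular and have density at least, say, $d=\sqrt\eps$. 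A standard averaging/counting argument shows that the minimum vertex degree of $\cH$ transfers to $\cK$: we get $\delta_1(\cK)\geq(\tfrac{7}{16}+\tfrac{\gamma}{2})\binom{t}{2}$, after discarding the few vertices incident to too many irregular or sparse triples.

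The heart of the argument is then a purely extremal statement about the cluster hypergraph: \emph{any $3$-uniform hypergraph $\cK$ on $t$ vertices with $\delta_1(\cK)\geq(\tfrac{7}{16}+o(1))\binom{t}{2}$ contains a nearly perfect matching}, i.e.\ a matching covering all but a constant number of vertices. Granting this, I would take such a matching $e_1,\dots,e_s$ in $\cK$, and for each edge $e_r=\{i,j,k\}$ with density $\geq d$ and all three pairs $\eps$-regular, use a greedy embedding inside the corresponding triad $(V_i,V_j,V_k)$ to find vertex-disjoint copies of $\cM$ covering almost all of $V_i\cup V_j\cup V_k$. The key point is that $\cM$ has $8$ vertices and is $3$-partite in a balanced-enough way; more precisely, one checks that $\cM$ embeds into any triple of large sets between which all three bipartite-type link densities are bounded below, and that by iterating a one-copy-at-a-time greedy argument (removing $8$ vertices at a time and using that regularity is inherited by not-too-small subsets) we can tile a $(1-o(1))$-fraction of each triad. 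Since the matching in $\cK$ covers a $(1-o(1))$-fraction of the clusters and each tiled triad leaves only $o(1)$ of its vertices uncovered, the resulting $\cM$-tiling of $\cH$ misses at most $\alpha n$ vertices, which is what Lemma~\ref{lem:Mtiling} asserts.

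The main obstacle is the extremal near-perfect matching statement for cluster hypergraphs with vertex-degree $\tfrac{7}{16}\binom{t}{2}$; this is where the precise constant $7/16$ is forced, and it is presumably the content of a separate lemma (matching the extremal example $\cH_3$ from Fact~\ref{fact:lb}, where $|A|\approx n/4$ and a matching covers at most $4|A|<n$ vertices, but an \emph{$\cM$-tiling} cares about covering vertices, not edges, so one needs $7/16$ rather than $1/4$). I would prove it by a short argument: if $\cK$ has a maximum matching missing a set $W$ with $|W|$ bounded below by a small linear amount, then each vertex of $W$ has all its $\geq(\tfrac{7}{16}+o(1))\binom{t}{2}$ edges meeting the matched part $V\setminus W$, and a counting argument over the matched triples---using that any matched triple can absorb only boundedly many ``rotations'' without creating a larger matching---derives a contradiction; alternatively one invokes an existing fractional-matching or absorption result for hypergraphs with high vertex degree. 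A secondary technical point is verifying that $\cM$ in particular (as opposed to a single edge, which would only need $\tfrac14$) can be greedily packed into a regular triad almost perfectly; this needs only that $\cM$ is linear and each of its vertices lies in few edges, so a straightforward greedy embedding using the Slicing Lemma for weakly regular triples suffices, and I would not belabor the computation.
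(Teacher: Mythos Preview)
Your approach has a genuine gap at its ``heart'': the claim that $\delta_1(\cK)\geq(\tfrac{7}{16}+o(1))\binom{t}{2}$ forces a near-perfect matching in $\cK$ is false. The construction $\cH_3$ from Fact~\ref{fact:lb}, read on the cluster level, is already a counterexample: with $|A|=\lfloor t/4\rfloor$ and edge set consisting of all triples meeting $A$, one has $\delta_1=(\tfrac{7}{16}+o(1))\binom{t}{2}$, yet every matching has at most $|A|$ edges and hence leaves at least $t/4$ vertices uncovered. Your proposed counting over matched triples cannot produce a contradiction here, since every edge through an uncovered vertex of $B$ must use a vertex of $A$, and $A$ is already saturated by the matching---no rotation is available. (The vertex-degree threshold for near-perfect matchings in $3$-graphs is well above $7/16$.)

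What saves the lemma is that $\cM$, unlike a single edge, can be embedded in this extremal example using only two vertices of $A$ per copy (place vertices $3$ and $6$ in $A$; then all four edges of $\cM$ meet $A$), so a perfect $\cM$-tiling does exist there. The paper exploits this, but cannot simply assert a near-perfect $\cM$-tiling of the cluster hypergraph---that would be Lemma~\ref{lem:Mtiling} applied to $\cK$, a circularity. Instead it argues by bootstrap: let $\alpha_0$ be the supremum of those $\alpha$ for which the conclusion fails; pass to a cluster hypergraph $\cK$ and take a maximum $\cM$-tiling $\ccM$ there, which by choice of $\alpha_0$ is roughly $\alpha_0$-deficient. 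A counting argument (Claim~\ref{claim:Cbig}, where the constant $7/16$ is actually used) then locates many configurations consisting of two uncovered vertices together with two copies of $\cM$ from $\ccM$ joined by at least $29$ crossing edges, and a detailed case analysis (Lemma~\ref{lem:frachomextent}) shows each such configuration carries a \emph{fractional} $\hom(\cM)$-tiling of weight strictly exceeding~$16$. Converting the resulting fractional tiling of $\cK$ back to an integer $\cM$-tiling of $\cH$ (Proposition~\ref{prop:Mfrac2int}) beats the $\alpha_0$ bound, the desired contradiction.
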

The proof of Lemma~\ref{lem:Mtiling} requires the regularity lemma which we introduce in Section~\ref{sec:wreg}.  
Sections~\ref{sec:hfunction} and~\ref{sec:Mtiling} are devoted to the proof of Lemma~\ref{lem:Mtiling} and finally, in Section~\ref{sec:pathtilingproof}, 
we deduce Lemma~\ref{lem:pathtiling} from Lemma~\ref{lem:Mtiling} by making use of the regularity lemma.

\subsection{The weak regularity lemma  and the cluster hypergraph}
\label{sec:wreg}
In this section we introduce the  \emph{weak hypergraph regularity lemma}, a 
straightforward  extension of Szemer\'edi's regularity lemma for graphs~\cite{Sz78}.
Since we only apply the lemma to $3$-uniform hypergraphs we will restrict the introduction to this case.

Let $\cH=(V,E)$ be a $3$-uniform hypergraph and let $A_1,A_2,A_3$ be
mutually disjoint non-empty subsets of~$V$. We define $e(A_1,A_2,A_3)$  
to be the number of edges with one vertex in each 
$A_i$, $i\in[3]$, and the {density} of $\cH$ with respect to $(A_1,A_2, A_3)$ as
\[
d(A_1,A_2,A_3)=\frac{e_{\cH}(A_1,A_2,A_3)}{|A_1||A_2||A_3|}\,.
\]
We say the triple $(V_1,V_2,V_3)$ of mutually disjoint subsets
$V_1,V_2,V_3\subseteq V$
is \emph{$(\eps,d)$-regular}, for  constants $\eps>0$ and $d\geq 0$,
if
\[
|d(A_1,A_2,A_3)-d|\leq \eps
\]
for all triple of subsets $A_i\subset V_i$, $i\in[3]$,
satisfying $|A_i|\geq \eps|V_i|$. 
The triple  $(V_1,V_2,V_3)$ is called \emph{$\eps$-regular} if it is
$(\eps,d)$-regular for some $d\geq 0$.
It is immediate from the definition that an $(\eps,d)$-regular triple  $(V_1,V_2,V_3)$ is  $(\eps',d)$-regular for all $\eps'>\eps$ 
and if  $V_i'\subset V_i$ has size $|V_i'|\geq c|V_i|$, then $(V_1',V_2',V_3')$ is $(\eps/c,d)$-regular.

Next we show that regular triples can be almost perfectly covered by  copies of $\cM$ provided the sizes of the partition classes obey certain restrictions.
 First note that $\cM$ is a subhypergraph of a tight path. The latter is defined 
similarly to a loose path, i.e.\  there is an ordering $(v_1,\dots,v_{t})$ of the vertices 
such that every edge consists of three consecutive vertices, every vertex is contained in an edge and two consecutive
edges intersect in exactly two vertices. 
\begin{proposition}
\label{prop:oneM}
Suppose $\cH$ is a  $3$-uniform hypergraph on $m$ vertices  with at least $dm^3$ edges. 
Then there is a tight path in $\cH$ which covers at least $2(dm+1)$ vertices.
In particular, if $\cH$ is $3$-partite with the partition classes $V_1,V_2,V_3$ and $2dm>10$
then for each $i\in[3]$ there is a copy of $\cM$ in $\cH$ which intersects $V_i$ in exactly two vertices and the other partition classes in three vertices.
\end{proposition}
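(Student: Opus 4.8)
The plan is to establish the first assertion — that $\cH$ contains a tight path on at least $2(dm+1)$ vertices — and then to read off from such a long tight path a copy of $\cM$ with the prescribed intersection pattern.

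For the first assertion I would argue by induction on $m$, the purpose of the induction being to reduce to the case where $\cH$ has large minimum vertex degree. We may assume $d<1/6$, since otherwise $dm^3\ge\binom m3\ge e(\cH)$ contradicts the hypothesis; in particular $2(dm+1)<m/3+2$, so there is room for the desired path. For the reduction, suppose some vertex $v$ satisfies $\deg(v)\le dm(2m-1)$ and delete it. Then $\cH-v$ has $m-1$ vertices and at least $dm^3-dm(2m-1)=dm(m-1)^2$ edges; setting $d':=e(\cH-v)/(m-1)^3$ we get $d'(m-1)=e(\cH-v)/(m-1)^2\ge dm$, so by the inductive hypothesis $\cH-v$, and hence $\cH$, contains a tight path on at least $2(d'(m-1)+1)\ge 2(dm+1)$ vertices. (The base is handled by the observation that a single edge is a tight path on $3$ vertices, which suffices whenever $dm\le 1/2$.) It therefore remains to treat hypergraphs with $\delta_1(\cH)>dm(2m-1)$.

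In this dense case I would build the tight path greedily, extending a tight path $(w_1,\dots,w_s)$ by one vertex at a time. An extension is possible precisely when the current end-pair $\{w_{s-1},w_s\}$ has codegree exceeding $s-2$, the number of other vertices already used; hence the process can only stall before reaching $2dm+2$ vertices if it lands on an end-pair of small codegree. The key technical point — and what I expect to be the main obstacle — is to run the greedy process so that this does not happen: one starts from a pair of large codegree (such pairs are abundant, since $\delta_1(\cH)>dm(2m-1)$ forces the average codegree to be of order $dm$), and at each step uses the large minimum vertex degree to choose, among the valid extensions, one whose new end-pair is again of large codegree, thereby maintaining an invariant strong enough to survive all of the fewer than $2dm$ extension steps needed. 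This produces a tight path on at least $2(dm+1)$ vertices.

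For the second assertion, assume in addition that $\cH$ is $3$-partite with parts $V_1,V_2,V_3$ and that $2dm>10$; then the first part supplies a tight path $Q=(w_1,\dots,w_s)$ with $s\ge 2(dm+1)>12$, so $s\ge 13$. Since every edge of $Q$ is a triple of consecutive vertices meeting each $V_j$ exactly once, the sequence of partition classes of $w_1,w_2,w_3,\dots$ is periodic with period $3$ and runs through all three classes within each period. Hence, for any given $i\in[3]$ we may choose $j\in\{1,\dots,s-7\}$ — possible since $s-7\ge 3$ — with $w_{j+2}\in V_i$, which also forces $w_{j+5}\in V_i$. Mapping the vertex $\ell$ of $\cM$ to $w_{j+\ell-1}$ for $\ell=1,\dots,8$ sends the edges $123,\ 345,\ 456,\ 678$ of $\cM$ to the consecutive triples $\{w_j,w_{j+1},w_{j+2}\}$, $\{w_{j+2},w_{j+3},w_{j+4}\}$, $\{w_{j+3},w_{j+4},w_{j+5}\}$, $\{w_{j+5},w_{j+6},w_{j+7}\}$ of $Q$, all edges of $\cH$; this is the desired copy of $\cM$. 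It meets $V_i$ in exactly the two vertices $w_{j+2},w_{j+5}$ (the images of vertices $3$ and $6$ of $\cM$), and it meets each of the other two classes in exactly three vertices (the images of $1,4,7$ and of $2,5,8$). As $i\in[3]$ was arbitrary, this completes the proof.
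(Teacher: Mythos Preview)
Your treatment of the second assertion is correct and matches the paper's: both use that in a $3$-partite hypergraph the class sequence along a tight path is periodic with period~$3$, so a tight path on at least ten vertices contains a copy of $\cM$ with any prescribed two-vertex class after a shift of at most two.

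For the first assertion, your induction correctly reduces to the case $\delta_1(\cH)>dm(2m-1)$, but the greedy step that follows has a real gap. To maintain your invariant you must, at each stage, find some $w_{s+1}\in N(w_{s-1},w_s)\setminus\{w_1,\dots,w_{s-2}\}$ for which the \emph{new} end-pair $\{w_s,w_{s+1}\}$ again has large codegree. Large minimum vertex degree of $w_s$ does not give this: the many edges through $w_s$ could avoid the set $N(w_{s-1},w_s)$ entirely except via vertices $c$ with $\deg(w_s,c)$ small. Nothing you have assumed rules out that every valid extension lands on an end-pair of codegree~$1$, and you yourself flag this step as ``the main obstacle'' without resolving it. It is, in fact, the entire content of the proposition.

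The paper avoids this difficulty by cleaning on \emph{pair} degree rather than vertex degree: as long as some pair $\{u,v\}$ has $0<\deg(u,v)<2dm$, delete all edges containing it. Fewer than $\binom{m}{2}\cdot 2dm<dm^3$ edges are removed in total, so the resulting hypergraph $\cH'$ is nonempty and every pair in it has codegree either $0$ or at least $2dm$. Now take any maximal tight path in~$\cH'$; its end-pair lies in an edge, hence has codegree at least $2dm$ in~$\cH'$, and by maximality all of those third vertices already lie on the path --- giving the bound at once, with no invariant to maintain. This replaces both your induction and your greedy argument with a single preprocessing step.
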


\begin{proof}
Starting from $\cH$ we remove all edges containing $u, v$ 
for each pair $u,v\in V$ of vertices  such that $0<\deg(u,v)  < 2dm$. 
We keep doing this until every pair $u,v$ satisfies $\deg(u,v) = 0$ or $\deg(u,v) \geq 2dm$ in the current hypergraph $\cH'$. 
Since less than 
\[
	\binom{m}2 \cdot 2dm < dm^3\leq e(\cH)
\] 
edges were removed during the process
we know that $\cH'$ is not empty.
Hence we can pick a maximal non-empty tight path $(v_1,v_2,\dots,v_t)$ in $\cH'$.  Since the pair $v_1,v_2$ is contained in an edge in $\cH'$ it is contained in $2dm$ edges and since the path
was chosen to be maximal all these vertices must lie in the path. Hence, the chosen tight path contains at least $2(dm+1)$ vertices. This completes the first part of the proof.

For the second part, note that there is only one way to embed a tight path into a $3$-partite $3$-uniform hypergraph once the two starting vertices are fixed. Since
 $\cM$ is a subhypergraph of the tight path on eight vertices we obtain the second part of the statement by possibly deleting up to two starting vertices.
 \end{proof}

\begin{proposition}
\label{prop:embedM}
Suppose the triple $(V_1,V_2,V_3)$ is $(\eps,d)$-regular with $d\geq2\eps$ and suppose the sizes of the partition classes 
satisfy  
\begin{equation}\label{eq:embedM}
m=|V_1|\geq|V_2|\geq |V_3|  \text{ with }  5|V_1|\leq3(|V_2|+|V_3|) 
\end{equation}
and $2\eps^2 m>7$. Then there is an $\cM$-tiling of $(V_1,V_2,V_3)$  leaving at most $3\eps m$ vertices uncovered.
\end{proposition}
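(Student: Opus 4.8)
The plan is to cover $(V_1,V_2,V_3)$ by copies of $\cM$ greedily, one copy at a time, removing its vertices and arguing that regularity (inherited by the leftover sets) keeps producing new copies until only few vertices remain. The key tool is Proposition~\ref{prop:oneM}: as long as a sub-triple on the remaining vertices is still regular with the right relative sizes, it contains a copy of $\cM$ hitting any prescribed class in exactly two vertices and the other two classes in three. The freedom to choose which class is hit twice is exactly what is needed to maintain the size balance $5|V_1'|\le 3(|V_2'|+|V_3'|)$ as vertices are deleted, since a copy of $\cM$ removes either $(2,3,3)$, $(3,2,3)$, or $(3,3,2)$ vertices from the three classes.

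First I would set up the greedy process: at step $i$ let $W_1^{(i)},W_2^{(i)},W_3^{(i)}$ be the current leftover sets, ordered so that $|W_1^{(i)}|\ge|W_2^{(i)}|\ge|W_3^{(i)}|$, and stop once $\sum_j|W_j^{(i)}|\le 3\eps m$. While we have not stopped, at least one class has more than $\eps m$ vertices; in fact all three do once we are careful, because each step removes at most $3$ vertices from each class and we can balance. Since each $|W_j^{(i)}|\ge\eps|V_j|$ (as long as, say, more than $3\eps m$ vertices remain in total and the classes stay balanced), the triple $(W_1^{(i)},W_2^{(i)},W_3^{(i)})$ is $(\eps/c,d)$-regular with $c\ge\eps$, hence $(1,d)$-regular, and in particular has density at least $d-1>0$ — more precisely at least $d-\eps\ge d/2\ge\eps$. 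Then the induced $3$-partite hypergraph on these sets has at least $(d-\eps)|W_1^{(i)}||W_2^{(i)}||W_3^{(i)}|$ edges, which is at least $(d-\eps)(\eps m)^3/$ something; invoking the second part of Proposition~\ref{prop:oneM} with $m'$ the current smallest class size, the hypothesis $2\eps^2 m>7$ translates (after accounting for the shrinkage, which costs only a constant factor absorbed into the $>7$) into $2dm'>10$, so a copy of $\cM$ exists hitting whichever class we please twice.

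Next I would specify the balancing rule that keeps~\eqref{eq:embedM} invariant: whenever $5|W_1^{(i)}|>3(|W_2^{(i)}|+|W_3^{(i)}|)$ is in danger, remove the next copy of $\cM$ so that it takes $3$ vertices from $W_1^{(i)}$ and only $2$ from the current smallest of $W_2^{(i)},W_3^{(i)}$ — this decreases the quantity $5|W_1|-3|W_2|-3|W_3|$ by $15-6=9$ each such step, whereas a ``neutral'' step changes it by at most $6$ in absolute value. A short induction shows the invariant $5|W_1^{(i)}|\le 3(|W_2^{(i)}|+|W_3^{(i)}|)+O(1)$ persists, and the additive constant is harmless since we stop once fewer than $3\eps m$ vertices remain anyway. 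The process terminates because the total number of leftover vertices strictly decreases by $8$ each step, so after at most $m/8$ steps we reach the stopping condition; at that point at most $3\eps m$ vertices are uncovered, as claimed.

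The main obstacle I expect is not the greedy argument itself but bookkeeping the regularity parameter and the size hypotheses of Proposition~\ref{prop:oneM} through the deletions: one must check that $(W_1^{(i)},W_2^{(i)},W_3^{(i)})$ still satisfies $d\ge 2\eps'$ for its inherited regularity constant $\eps'=\eps/c$, that the smallest class is still large enough for $2d m'>10$, and that the ordering of the classes by size can flip during the process (so ``hit $V_i$ twice'' must be reinterpreted as ``hit the current class of given rank twice''). All of these are robust because we stop while $\Omega(\eps m)$ vertices remain in total and the balance condition forces each class to retain $\Omega(\eps m)$ vertices; so $c$ stays bounded below by a constant multiple of $\eps$ and the inherited parameters degrade only by a bounded factor, which the assumption $2\eps^2 m>7$ (rather than merely $2\eps m>\text{const}$) is designed to absorb. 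I would not grind through the exact constants; the point is simply that every quantity that needs to be ``large'' is a fixed fraction of $\eps m$ or $\eps^2 m$.
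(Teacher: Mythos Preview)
Your greedy approach is correct and is essentially the paper's idea. The paper avoids the adaptive balancing by solving the linear system upfront: with $t_i=(1-\eps)\tfrac18(3|V_j|+3|V_k|-5|V_i|)$ for $\{i,j,k\}=\{1,2,3\}$, hypothesis~\eqref{eq:embedM} (together with $|V_1|\ge|V_2|\ge|V_3|$) makes each $t_i\ge 0$, and removing exactly $t_i$ copies that hit $V_i$ twice leaves precisely $\eps|V_i|$ vertices in each class. Thus at every intermediate step each class still has at least $\eps|V_i|$ vertices, and one applies the \emph{original} $(\eps,d)$-regularity to subsets of size $\eps|V_i|$ directly --- no parameter degradation, no dynamic invariant to maintain. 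One caveat on your write-up: a copy of $\cM$ removes a $(2,3,3)$ pattern across \emph{all three} classes, so removing $(3,3,2)$ from $(W_1,W_2,W_3)$ changes $5|W_1|-3|W_2|-3|W_3|$ by $-15+9+6=0$, not by $15-6=9$ as you wrote, and the only other pattern $(2,3,3)$ changes it by $+8$, not ``at most $6$''. Your rule of hitting the currently smallest class twice still preserves the balance (and hence, as you assert without proof, forces $|W_3|\ge\tfrac23|W_1|>\tfrac23\eps m$ whenever the total exceeds $3\eps m$), but the supporting arithmetic needs to be redone.
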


\begin{proof}
Note that if we take a copy of $\cM$ intersecting $V_i$, $i\in[3]$ in exactly two vertices then this copy intersects the other partition classes in exactly three vertices.
We define \[t_i=(1-\eps)\frac18\left(3|V_j|+3|V_k|-5|V_i|\right) \quad\text{where}\quad i,j,k\in[3] \text{ are distinct.}\]
Due to our assumption all $t_i$ are non-negative and we choose $t_i$ copies of $\cM$ intersecting~$V_i$ in exactly two vertices. 
This would leave $|V_i|- (2t_i+3t_j+3t_k)=\eps |V_i|$ vertices in $V_i$ uncovered, hence at most $3\eps m$ in total.

To complete the proof we exhibit a copy of $\cM$ in all three possible types 
in the remaining hypergraph, hence showing that the choices of the copies above are indeed possible.
To this end, from the  remaining vertices of each partition class $V_i$ take a subset $U_i$, $i\in[3]$ of size~$\eps |V_i|$.  
Due to the regularity of the triple $(V_1,V_2,V_3)$ we have 
$e(U_1,U_2,U_3)\geq(d- \eps) (\eps m)^3$. Hence, by Proposition~\ref{prop:oneM} there is a copy of $\cM$ (of each type) in $(U_1,U_2,U_3)$.
\end{proof}

The connection of regular partitions and dense hypergraphs is established by regularity lemmas.
The version introduced here is a straightforward generalisation of the original regularity lemma to hypergraphs (see, e.g.,~\cites{Ch91,FR92,Steger}).
\begin{theorem}
  \label{thm:wreg}
  For all $t_0 \geq 0$ and  $\eps>0$, there exist $T_0=T_0(t_0,\eps)$ and $n_0 = n_0(t_0, \eps)$
  so  that for every $3$-uniform hypergraph $\cH=(V,E)$
  on $n \geq n_0$ vertices, 
  there exists a partition
  $V=V_0\dcup V_1\dcup\dots\dcup V_t$ such that
  \begin{itemize}
  \item[\iti{i}] $t_0\leq t \leq T_0$,
  \item[\iti{ii}] $|V_1|=|V_2|=\dots= |V_t|$ and $|V_0|\leq \eps n$, 
  \item[\iti{iii}] for all but at most $\eps\binom{t}{3}$ sets
    $\{i_1,i_2,i_3\}\in\binom{[t]}3$,
     the triple $(V_{i_1},V_{i_2},V_{i_3})$ is
  $\eps$-regular.\qed
  \end{itemize}
\end{theorem}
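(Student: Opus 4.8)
The statement is the standard weak (Szemer\'edi-type) regularity lemma for $3$-uniform hypergraphs, and the plan is to prove it by the usual energy-increment argument, now carried out for $3$-partite densities. For a partition $\cQ=\{W_1,\dots,W_s\}$ of all but an exceptional part $W_0$ of $V$, with the $W_i$ $(i\ge1)$ of nearly equal size, define the \emph{index}
\[
q(\cQ)=\sum_{\{i,j,k\}\in\binom{[s]}{3}}\frac{|W_i|\,|W_j|\,|W_k|}{n^3}\,d(W_i,W_j,W_k)^2\in[0,1].
\]
Everything rests on two facts, both instances of the defect form of the Cauchy--Schwarz (Jensen) inequality for the density function: (a) if $\cQ'$ refines $\cQ$ then $q(\cQ')\ge q(\cQ)$; and (b) if a triple $(W_i,W_j,W_k)$ is not $\eps$-regular, witnessed by $A_i\subseteq W_i$, $A_j\subseteq W_j$, $A_k\subseteq W_k$ with $|A_\ell|\ge\eps|W_\ell|$ and $|d(A_i,A_j,A_k)-d(W_i,W_j,W_k)|>\eps$, then splitting each of $W_i,W_j,W_k$ into its witness set and the complement increases the contribution of that triple to $q$ by more than $\eps^5\,|W_i||W_j||W_k|/n^3$ ($\eps^3$ from the three witness proportions, $\eps^2$ from the density defect). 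I would first isolate (a) and (b) as a self-contained sublemma; this is the only computational step and it is routine.

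Then I would iterate. Start from an arbitrary equitable partition $V=V_0\dcup V_1\dcup\dots\dcup V_{t_0}$ with $|V_1|=\dots=|V_{t_0}|$ and $|V_0|<t_0$. Given a current partition with non-exceptional parts $V_1,\dots,V_t$: if at most $\eps\binom{t}{3}$ triples fail to be $\eps$-regular, stop --- the partition then satisfies \iti{i}--\iti{iii}. Otherwise, for each of the more than $\eps\binom{t}{3}$ irregular triples fix one witnessing triple of subsets, and refine every $V_i$ by the common refinement of all witness sets lying inside it; as $V_i$ belongs to at most $\binom{t-1}{2}$ triples, each $V_i$ is replaced by at most $2^{\binom{t-1}{2}}$ pieces. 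Summing the gain from (b) over all irregular triples (each involved part having size $\approx n/t$) shows $q$ jumps by at least $c\eps^6$ for an absolute constant $c>0$. Since $0\le q\le 1$, at most $O(\eps^{-6})$ refinement rounds occur, and the bounded per-round blow-up of the number of parts then forces the final number of parts below a tower-type bound $T_0=T_0(t_0,\eps)$.

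Finally I would deal with equitability and the exceptional set. After a refinement round the pieces are of unequal sizes, so in round $i$ I would re-cut every piece into chunks of one common size chosen small enough that the total number of leftover vertices is below $\eps n/2^{i+1}$, and move those leftovers into $V_0$; since also $|V_0|<t_0\le\eps n/2$ at the start, this keeps $|V_0|\le\eps n$ throughout. Each re-cut is itself a refinement except for the transferred vertices, so by (a) no index is lost there, while the index lost to the $V_0$-transfers is $O(\eps)$ in total over all rounds --- a bounded quantity, hence harmless for the $0\le q\le1$ counting of refinement rounds. One also takes $n_0$ large enough that all the chunk sizes stay positive, which is possible because the number of rounds and the number of parts are bounded in terms of $t_0$ and $\eps$ alone. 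I expect the only real difficulty to be exactly this bookkeeping --- threading the equitability and exceptional-set constraints through the rounds without spoiling the clean $c\eps^6$ energy increment; the mathematical heart, facts (a) and (b), is the familiar Cauchy--Schwarz computation and presents no trouble.
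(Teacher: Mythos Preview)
Your outline is the standard energy-increment proof of the weak hypergraph regularity lemma and is correct as a sketch; the index, the $\eps^5$ gain per irregular triple, the $c\eps^6$ global increment, the tower-type bound on the number of parts, and the equitisation bookkeeping are all as one finds them in the literature. The paper, however, does not give its own proof of this theorem at all: it is stated with a \qed and attributed to Chung, Frankl--R\"odl, and Steger as a known straightforward extension of Szemer\'edi's lemma. So there is nothing to compare against --- you are supplying a proof where the paper simply quotes the result.

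One small remark on your equitisation step: your accounting ``total index lost to $V_0$-transfers is $O(\eps)$, hence the bound on the number of rounds survives'' is the right idea, but as written it needs one more line. You should make explicit that after $R$ rounds the index satisfies $q_R\ge q_0 + Rc\eps^6 - O(\eps)$, so $q_R\le 1$ forces $R=O(\eps^{-6})$; otherwise a reader might worry that an $O(\eps)$ loss swamps a $c\eps^6$ gain within a single round. Alternatively, and slightly cleaner, one can define the index over all of $V$ (treating $V_0$ as an ordinary part for bookkeeping purposes only), so that the equitisation step is itself a genuine refinement and no loss occurs at all.
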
 

A partition as given in Theorem \ref{thm:wreg} is called an \emph{ $(\eps,t)$-regular partition} of $\cH$.
For an $(\eps,t)$-regular partition of $\cH$ and $d\geq 0$ we refer to $\cQ=(V_i)_{ i \in[t]}$ as the family of \emph{ clusters} (note that 
the exceptional vertex set $V_0$ is excluded)
and define the \emph{cluster hypergraph} $\cK=\cK(\eps,d,\cQ)$  with
vertex set $[t]$ and $\{i_1,i_2, i_3\}\in\binom{[t]}{3}$ being an edge if and only if 
$(V_{i_1},V_{i_2},V_{i_3})$ is $\eps$-regular and 
$d(V_{i_1},V_{i_2},V_{i_3})\geq d$.

In the following we show that the cluster hypergraph almost inherits the minimum vertex degree of the original hypergraph. 
The proof which we give for completeness  is standard and can be found e.g. in \cite{reglemsurvey} for the case of graphs.

\begin{proposition}
\label{prop:clustermindeg}
For all $\gamma>d>\eps>0$ and all $t_0$ there exist $T_0$ and $n_0\in\NN$ such that the following holds.

If $\cH$ is a $3$-uniform hypergraph on $n>n_0$ vertices with 
$\delta_1(\cH)\geq\left(\frac{7}{16}+\gamma\right)\binom{n}2$, then there exists an $(\eps,t)$-regular partition $\cQ$ 
with $t_0<t<T_0$ such that the cluster hypergraph $\cK=\cK(\eps,d,\cQ)$ 
has minimum vertex degree
 $\delta_1(\cK)\geq \left(\frac{7}{16}+\gamma-\eps-d\right)\binom{t}2.$
\end{proposition}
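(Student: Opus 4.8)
The plan is to apply the weak regularity lemma (Theorem~\ref{thm:wreg}) with an auxiliary parameter $\eps^\ast$ that is much smaller than $\eps$ and $d$ and with a lower bound on the number of clusters that is large in terms of $\eps$; then to discard the few clusters that lie in an atypically large number of irregular triples; and finally to transfer the degree condition from $\cH$ to $\cK$ by a double counting of the edges of $\cH$ meeting a fixed cluster. Concretely, I would first run Theorem~\ref{thm:wreg} with parameters $\eps^\ast$ and $t_0^\ast:=\max\{3t_0,\lceil 200/\eps\rceil\}$, where $\eps^\ast>0$ is chosen small enough that $\eps^\ast+\sqrt{\eps^\ast}\le\eps$ and $\sqrt{\eps^\ast}$ is tiny compared with $\eps$; this produces $T_0:=T_0(t_0^\ast,\eps^\ast)$, $n_0:=n_0(t_0^\ast,\eps^\ast)$, and an $(\eps^\ast,t^\ast)$-regular partition $V=V_0\dcup V_1\dcup\dots\dcup V_{t^\ast}$ with clusters of common size $m$.

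Next I would clean up this partition. Call a cluster $V_i$ \emph{bad} if it lies in more than $\sqrt{\eps^\ast}\binom{t^\ast}{2}$ triples of clusters that are not $\eps^\ast$-regular. Since at most $\eps^\ast\binom{t^\ast}{3}$ triples are not $\eps^\ast$-regular, counting incidences between clusters and such triples shows that there are fewer than $\sqrt{\eps^\ast}\,t^\ast$ bad clusters. I would move every bad cluster into the exceptional set, relabel, and keep the remaining clusters as $\cQ=(V_i)_{i\in[t]}$. Because $\eps^\ast$ is tiny, the result is again an $(\eps,t)$-regular partition: the exceptional set has grown by at most $\sqrt{\eps^\ast}\,t^\ast m\le\sqrt{\eps^\ast}\,n$ and stays below $\eps n$; one still has $t_0<t<T_0$ because $t\ge(1-\sqrt{\eps^\ast})t^\ast\ge\tfrac23 t_0^\ast$; and every $\eps^\ast$-regular triple is $\eps$-regular, so at most $\eps^\ast\binom{t^\ast}{3}\le\eps\binom{t}{3}$ triples of $\cQ$ fail to be $\eps$-regular.

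To transfer the degree condition, fix any surviving cluster $V_i$ and start from $\sum_{v\in V_i}\deg_{\cH}(v)\ge m\,\delta_1(\cH)\ge m(\tfrac{7}{16}+\gamma)\binom{n}{2}$. The left-hand side equals $\sum_{e\in\cH}|e\cap V_i|$, which I would bound from above by sorting the edges of $\cH$ meeting $V_i$: those with at least two vertices in $V_i$, those meeting $V_0$, and those having two vertices in a single common cluster $V_j$ contribute only $O(tm^3)+\sqrt{\eps^\ast}\,mn^2$ in total; every remaining edge is transversal to a triple $\{i,j,k\}$ of distinct clusters of $\cQ$ and hence contributes at most $m^3$, with at most $\deg_{\cK}(i)$ such triples in $\cK$, at most $\sqrt{\eps^\ast}\binom{t^\ast}{2}$ further triples through $i$ that are not $\eps$-regular (here one uses that $V_i$ is not bad), and every other triple through $i$ having density below $d$ and thus carrying at most $dm^3$ edges. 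Dividing by $m^3$ and using $tm\le n\le tm/(1-\sqrt{\eps^\ast})$ together with $\binom{n}{2}/m^2\ge\binom{t}{2}$, I obtain
\[
\deg_{\cK}(i)\ \ge\ \Bigl(\tfrac{7}{16}+\gamma\Bigr)\binom{t}{2}-(\eps+d)\binom{t}{2}-O(t),
\]
and the $O(t)$ error is absorbed by the quadratic main term since $t\ge t_0^\ast\ge 200/\eps$; as $i$ was arbitrary this gives the claimed bound on $\delta_1(\cK)$.

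The delicate point is the handling of the bad clusters: a cluster sitting in many irregular triples can genuinely have small degree in $\cK$, so some clusters must be discarded, and one has to verify that this discarding keeps the exceptional set below $\eps n$ and the number of clusters above $t_0$ while changing the count of irregular triples only harmlessly. Intertwined with this is the need to make the lower-order error terms — the degenerate edges and the contribution of $V_0$ — negligible against $\binom{t}{2}$, which is precisely what forces both the passage to the smaller parameter $\eps^\ast$ and the enlarged lower bound on the number of clusters.
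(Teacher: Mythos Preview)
Your proposal is correct and follows essentially the same approach as the paper: apply the weak regularity lemma with a much smaller auxiliary parameter, discard the few clusters lying in too many irregular triples, and then transfer the minimum-degree condition by double counting the edges of $\cH$ meeting a fixed surviving cluster. The only cosmetic differences are in the specific choice of the auxiliary parameter ($\eps^2/144$ in the paper versus your $\eps^\ast$ with $\eps^\ast+\sqrt{\eps^\ast}\le\eps$) and that the paper phrases the final counting as a proof by contradiction rather than as a direct lower bound on $\deg_{\cK}(i)$.
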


\begin{proof}
Let $\gamma>d>\eps$ and $ t_0$ be given. 
We apply the regularity lemma with $\eps'=\eps^2/144$ and $t_0'=\max\{2t_0,10/\eps\}$ 
to obtain $T_{0}'$ and $n_{0}'$.
We set $T_0=T_{0}'$ and $n_0=n_{0}'$.
Let $\cH$ be a $3$-uniform hypergraph on $n>n_0$ vertices which satisfies $\delta(\cH)\geq(7/16+\gamma)\binom{n}2$.
By applying the regularity lemma we obtain an $(\eps',t')$-regular partition $V'_0\dcup V_1\dcup \dots\dcup V_{t'}$ of $V$
and let $m=|V_1|=(1-\eps')n/t'$ denote the size of the partition classes.

Let $I=\{i\in[t']\colon V_i \text{ is contained in more than } \eps\binom{t'}2/8 \text{ non }\eps'\text{-regular triples}\}$ and observe that 
$|I|< 8\eps't'/\eps$ due to the property $(iii)$ of Theorem~\ref{thm:wreg}. 
Set $V_0=V'_0\cup\bigcup_{i\in I}V_i$ and let $J=[t']\setminus I$ and $t=|J|$. We now claim that $V_0$ and $\cQ=(V_j)_{j\in J}$ is the desired partition.
Indeed, we have $T_0>t'\geq t>t'(1-8\eps'/\eps)\geq t_0$ and $|V_0|< \eps'n+8\eps'n/\eps \leq \eps n/16$.
The property $(iii)$ follows directly from  Theorem~\ref{thm:wreg}.
For a contradiction, assume now that $\deg_{\cK}(V_j)<(\frac{7}{16}+\gamma-\eps-d)\binom{t}2$ for some $j\in J$. 
Let $x_j$ denote the number of edges which intersect $V_j$ 
in exactly one vertex and each other $V_i$, $i\in J$, in at most one vertex. 
Then, the assumption yields
 \begin{align*}
 x_j&\leq |V_j|\left[\left(\frac{7}{16}+\gamma-\eps-d\right)\binom{t}2m^2+\frac{\eps}8\binom{t'}2m^2+\frac{\eps}{16}n^2+
 d\binom{t}2m^2\right]\\ &\leq|V_j|\frac{n^2}2\left(\frac{7}{16}+\gamma-\frac{\eps}2\right) 
 \end{align*}
 On the other hand, from the minimum degree of $\cH$ we obtain
 \begin{align*}
 x_j\geq |V_j|\left(\frac{7}{16}+\gamma\right)\binom{n}2-2\binom{|V_j|}2n-3\binom{|V_j|}3
 \geq |V_j|\binom{n}2\left(\frac{7}{16}+\gamma-\frac{4}{t'}\right)
 \end{align*}
a contradiction.
\end{proof}

\subsection{Fractional \texorpdfstring{$\hom(\cM)$}{hom(M)}-tiling}
\label{sec:hfunction}
To obtain a large $\cM$-tiling in the hypergraph $\cH$, we consider weighted homomorphisms from $\cM$ into the cluster hypergraph $\cK$. 
To this purpose, we define the following.

\begin{definition}
\label{def:frachomM}Let $\cL$ be a $3$-uniform hypergraph. 
A function $h\colon V(\cL) \times E(\cL) \rightarrow [0,1]$ is called a fractional $\hom(\cM)$-tiling of $\cL$ if
\begin{enumerate}[label=\alabel]
\item $h(v,e) \neq 0 \Rightarrow v \in e$,
\item \label{en:pk2} $h(v) = \sum_{e \in E(\cL)}h(v,e) \leq 1$,
\item  \label{en:pk3} for every $e \in E(\cL)$ there exists a labeling of the vertices of $e = uvw$ such that \[h(u,e) = h(v,e) \geq h(w,e) \geq \frac23 h(u,e)\]
\end{enumerate}
By $h_{\min}$ we denote the smallest non-zero value of $h(v,e)$ (and we set $h_{\min}=\infty$ if $h\equiv 0$) and
 the sum over all values is the \emph{weight} $w(h)$ of $h$
  \[w(h) = \sum_{(v,e) \in V(\cL) \times E(\cL)} h(v,e)\,.\]
\end{definition}

The allowed values of $h$ are based on the homomorphisms from $\cM$ to a single edge, hence the term $\hom(\cM)$-tiling.
Given one such homomorphism, assign each vertex in the image the number of vertices from $\cM$ mapped to it.
In fact, for any such homomorphism the preimage of one vertex has size two, while the preimages 
of the other two vertices has size three. Consequently, for any family of homomorphisms of $\cM$
into a single edge the smallest and the largest class of preimages can differ by a factor of $2/3$ at most
and this observation is the reason for condition~\ref{en:pk3} in Definition~\ref{def:frachomM}.
We also note the following.
\begin{fact}
\label{fact:homM}
There is a fractional $\hom(\cM)$-tiling $h$ of the hypergraph $\cM$ which has $h_{\min}\geq 1/3$ and weight $w(h)=8$.
\end{fact}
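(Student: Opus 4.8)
The plan is to exhibit an explicit fractional $\hom(\cM)$-tiling of $\cM$ itself and verify the three defining properties together with the claimed bounds on $h_{\min}$ and $w(h)$. Recall that $\cM$ has vertex set $\{1,\dots,8\}$ and edge set $\{123,345,456,678\}$. The weight target $w(h)=8$ suggests that $h$ should "use up" each of the $8$ vertices completely, i.e.\ attain $h(v)=1$ for every $v\in V(\cM)$; indeed $\sum_v h(v)=\sum_{(v,e)}h(v,e)=w(h)$, so $w(h)=8$ forces $h(v)=1$ for all $v$, which in turn makes property~\ref{en:pk2} tight. So the real task is to split the mass $1$ at each vertex among the edges containing it, in a way compatible with the per-edge labelling constraint~\ref{en:pk3}.

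The natural guess comes from the homomorphism picture described just before the Fact: a homomorphism of $\cM$ onto a single edge has fibre sizes $(3,3,2)$ in some order, so normalising by $8$ gives per-edge weights $(3/8,3/8,2/8)=(3/8,3/8,1/4)$, which satisfy $3/8=3/8\geq 1/4\geq \tfrac23\cdot\tfrac38=1/4$ — exactly the borderline case of~\ref{en:pk3}. Concretely I would try: $h(1,123)=h(2,123)=3/8$, $h(3,123)=1/4$; then on the edge $345$ the vertex $3$ already carries $1/4$, so it should receive $3/4$ more and play the role of a "large" vertex there, suggesting $h(3,345)=h(5,345)=3/8$ is wrong since $3$ needs $3/4$. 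A cleaner route: assign $h(3,123)=1/2$, $h(1,123)=h(2,123)=1/4$? That violates~\ref{en:pk3} ($1/4<\tfrac23\cdot\tfrac12$). So instead use the genuinely symmetric assignment forced by the fibre-size analysis: on each edge one vertex gets $2/8$ and the other two get $3/8$, and we must choose which vertex is the "small" one on each edge so that the four local contributions at each vertex sum to exactly $1$. Since vertices $1,2,6,7,8$ lie in a single edge each, they automatically get the value assigned on that edge; vertices $3,4,5$ lie in two edges each and must get a total of $1$, i.e.\ a split into two pieces from the multiset $\{1/4,3/8\}$ — and $1/4+3/8=5/8\neq 1$ while $3/8+3/8=3/4\neq 1$ and $1/4+1/4=1/2\neq 1$. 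This shows the naive $(3,3,2)$ values cannot literally work for all edges simultaneously, so $w(h)=8$ with $h\equiv 1$ on vertices must be achieved differently.

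Re-examining: property~\ref{en:pk3} only requires $h(w,e)\geq \tfrac23 h(u,e)$ where $h(u,e)=h(v,e)$, it does not pin the values to $\{1/4,3/8\}$. So I would instead parametrise each edge $e$ by $(x_e,x_e,y_e)$ with $\tfrac23 x_e\le y_e\le x_e$, and set up the linear system "$h(v)=1$ for all $v$". The five degree-one vertices give $x_{123}$ or $y_{123}$ etc.\ equal to $1$ directly — e.g.\ vertex $1$ in edge $123$ forces that edge's value at $1$ to be $1$, and since $1,2$ can be the equal pair, $x_{123}=1$, then $y_{123}\in[2/3,1]$ is the value at $3$; similarly $x_{678}=1$ with value $y_{678}\in[2/3,1]$ at $6$ — wait, $6$ is degree one, so it must receive $1$, forcing $6$ to be in the equal pair of edge $678$, i.e.\ $x_{678}=1$ and the small vertex of $678$ is $8$ or $7$. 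Propagating, vertices $3,4,5$ then must absorb the complementary amounts, and one finds a consistent solution with all local values in $[1/3,1]$, giving $h_{\min}\ge 1/3$; the weight is then $\sum_v h(v)=8$ by construction. The main obstacle — really the only content here — is choosing the orientation (which vertex is "small") on each of the four edges so that the overlap vertices $3,4,5$ end up with total exactly $1$ while every individual value stays $\ge \tfrac23$ times its partner and $\ge 1/3$; once a valid assignment is written down, properties~\ref{en:pk2} and~\ref{en:pk3} are immediate and $w(h)=8$, $h_{\min}\ge 1/3$ follow by inspection.
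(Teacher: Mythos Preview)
Your approach---write down an explicit assignment with $h(v)=1$ for every vertex---is exactly what the paper does, and your reduction ``$w(h)=8$ forces $h(v)=1$ for all $v$'' is correct. But the execution has a concrete error and is ultimately incomplete. The degree-one vertices of $\cM$ (edges $123$, $345$, $456$, $678$) are $1,2,7,8$, not $1,2,6,7,8$: vertex $6$ lies in both $456$ and $678$, so the four degree-two vertices are $3,4,5,6$. Your ``wait, $6$ is degree one'' is simply false, and the reasoning from that point on (forcing $6$ into the equal pair of $678$) would make $h(6)>1$. More importantly, you never actually exhibit a valid assignment---``one finds a consistent solution'' is not a proof for a statement whose entire content is the existence of such an assignment.

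Once the degrees are corrected, your parametrisation immediately yields the paper's construction: on the outer edges $123$ and $678$ put $(1,1,\tfrac23)$ with the $\tfrac23$ on the shared vertices $3$ and $6$; on the inner edges $345$ and $456$ put $(\tfrac12,\tfrac12,\tfrac13)$ with the $\tfrac13$ on $3$ and $6$ respectively. Then $h(3)=\tfrac23+\tfrac13=1$, $h(4)=h(5)=\tfrac12+\tfrac12=1$, $h(6)=\tfrac13+\tfrac23=1$, the remaining vertices get $1$ from their unique edge, every edge satisfies~\ref{en:pk3} (with equality in the lower bound), $h_{\min}=\tfrac13$, and $w(h)=8$.
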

\begin{proof}
Let $x_1$, $x_2$, $w_1$, $y_1$, $y_2$, $w_2$, $z_1$, and $z_2$ be the vertices of 
$\cM$ and let 
\[
	x_1x_2w_1\,,\quad w_1y_1y_2\,,\quad y_1y_2w_2\,,\qand w_2z_1z_2
\]
be the edges 
of~$\cM$. 
On the edges $x_1x_2w_1$ and $z_1z_2w_2$ we assign the vertex weights
$(1,1,2/3)$, where the weight $2/3$ is assigned to~$w_1$ and~$w_2$. The vertex weights 
for edges $y_1y_2w_1$ and $y_1y_2w_2$ are $(1/2,1/2,1/3)$, where $w_1$ and $w_2$ get 
the weight $1/3$. It is easy to see that those vertex weights give rise to 
a  $\hom(\cM)$-tiling $h$ on $\cM$ with $h_{\min}=1/3$ and $w(h)=8$.
\end{proof}

The notion $\hom(\cM)$-tiling is also motivated by the following 
proposition 
which shows that such a fractional $\hom(\cM)$-tiling in a cluster hypergraph  
can be ``converted'' to an integer $\cM$-tiling in the original hypergraph.

\begin{proposition}
\label{prop:Mfrac2int}
Let $\cQ$ be an $(\eps,t)$-regular partition of a $3$-uniform, $n$-vertex hypergraph~$\cH$ with $n>21\eps^{-2}$ and let
$\cK=\cK(\eps,6\eps,\cQ)$  
be the corresponding cluster hypergraph.
Furthermore, let $h\colon V(\cK)\times E(\cK)\to [0,1]$ be a fractional $\hom(\cM)$-tiling of $\cK$ with $h_{\min}\geq 1/3$.
Then there exists an $\cM$-tiling of $\cH$ which covers all but at most $(w(h)-27t\eps) |V_1|$ vertices.
\end{proposition}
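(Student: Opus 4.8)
The plan is to process the fractional $\hom(\cM)$-tiling $h$ edge by edge, turning the fractional weights into an honest collection of vertex-disjoint copies of $\cM$ inside $\cH$. First I would fix, for each cluster $V_i$, a ``budget'' accounting: I intend to reserve within each $V_i$ a pool of vertices of size essentially $h(i)|V_1|$, split among the edges $e\ni i$ according to the weights $h(i,e)$. Condition~\ref{en:pk2} guarantees $h(i)\le 1$, so these pools fit disjointly inside $V_i$ (up to a negligible rounding loss, which is where a few of the $27t\eps$ lost vertices come from). Condition~\ref{en:pk3} is the crucial structural input: for each edge $e=\{i,j,k\}\in E(\cK)$ with chosen labeling $h(u,e)=h(v,e)\ge h(w,e)\ge \tfrac23 h(u,e)$, the three reserved sub-pools $A_u\subset V_u$, $A_v\subset V_v$, $A_w\subset V_w$ have sizes in the ratio required to tile a $3$-partite triple by copies of $\cM$ — namely, writing $a=|A_u|=|A_v|\ge |A_w|\ge \tfrac23 a$, one checks $5|A_w|\le 3(|A_u|+|A_v|)$, exactly hypothesis~\eqref{eq:embedM} of Proposition~\ref{prop:embedM} with $V_w$ playing the role of the ``small'' class (one also checks the two larger classes satisfy the analogous inequality trivially).

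Next I would verify the regularity and size prerequisites so that Proposition~\ref{prop:embedM} applies to each such sub-triple. Since $e\in E(\cK)=E(\cK(\eps,6\eps,\cQ))$, the triple $(V_i,V_j,V_k)$ is $\eps$-regular with density at least $6\eps$; restricting to sub-pools each of size at least $h_{\min}|V_1|\ge \tfrac13|V_1|$ of their clusters, the induced triple $(A_u,A_v,A_w)$ is $(3\eps,d)$-regular for some $d\ge 6\eps - \eps = 5\eps \ge 2\cdot 3\eps$, so the ``$d\ge 2\eps'$'' hypothesis of Proposition~\ref{prop:embedM} holds with $\eps'=3\eps$; and the size bound $2(3\eps)^2|A_w| > 7$ follows from $|A_w|\ge \tfrac13|V_1|\ge \tfrac13(1-\eps)n/t \ge n/(7t)$ together with $n>21\eps^{-2}$ — here I'd absorb the small constants into $t_0$ implicitly or just note $n/t$ is large. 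Proposition~\ref{prop:embedM} then yields an $\cM$-tiling of $(A_u,A_v,A_w)$ leaving at most $3\cdot 3\eps\cdot|A_u| \le 9\eps|V_1|$ vertices uncovered in that sub-triple.

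Finally I would total up the count. Summing the copies of $\cM$ produced over all edges $e\in E(\cK)$, these copies are pairwise vertex-disjoint because they live in disjoint reserved pools, so together they form a genuine $\cM$-tiling of $\cH$. The number of vertices covered in cluster $V_i$ is at least $\sum_{e\ni i}|A_{i,e}|$ minus the per-edge leftover $9\eps|V_1|$ and the rounding loss; summed over all $i$, the total number of covered vertices is at least
\[
\Big(\sum_{(i,e)} h(i,e) - C\eps t\Big)|V_1| = \big(w(h) - C\eps t\big)|V_1|
\]
for some absolute constant $C$. One must bookkeep so that $C\le 27$: there are $t$ clusters contributing a rounding loss of at most $O(1)$ vertices each, and each edge contributes a leftover of $9\eps|V_1|$ with at most $\binom{t}{2}$-ish edges through any structure — so the accounting has to be organized by clusters rather than edges, noting each vertex of $\cK$ lies in copies of $\cM$ coming from its incident edges and $h(i)\le 1$ caps the total. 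Carefully tracking which factor of $\eps$ and $t$ lands where to land exactly on $27t\eps$ is the only delicate point; the rest is routine. I expect the main obstacle to be precisely this final bookkeeping — in particular making sure the leftover from the $3\eps m$-type bound in Proposition~\ref{prop:embedM}, applied to pools of varying sizes across potentially many edges per cluster, aggregates to a bound proportional to $t\eps$ rather than (say) $t^2\eps$ — which forces the per-edge application to be set up with the correct normalization relative to $|V_1|$ rather than $|A_{i,e}|$.
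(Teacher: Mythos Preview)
Your approach is essentially identical to the paper's: partition each cluster $V_i$ into disjoint pools $U_i^e$ of size $h(i,e)|V_i|$, observe that each resulting sub-triple inherits regularity, apply Proposition~\ref{prop:embedM}, and sum. Two points need correcting, one minor and one being exactly the gap you flag.

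First, in your regularity check you write that the restricted triple is $(3\eps,d)$-regular ``for some $d\ge 6\eps-\eps=5\eps\ge 2\cdot 3\eps$''. But $5\eps<6\eps$, so that chain of inequalities is false. The fix is that under the paper's definition of $(\eps,d)$-regularity, passing to subsets each of relative size at least $1/3$ yields a $(3\eps,d)$-regular triple with the \emph{same} nominal $d$ (not $d-\eps$). Since cluster-hypergraph edges have $d\ge 6\eps=2\cdot(3\eps)$, the hypothesis $d\ge 2\eps'$ of Proposition~\ref{prop:embedM} holds (with equality) for $\eps'=3\eps$.

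Second, and more importantly, the bookkeeping you identify as ``the main obstacle'' has a one-line resolution that you circle around (``$h(i)\le 1$ caps the total'') but never state: since $h_{\min}\ge 1/3$ and $h(i)=\sum_{e\ni i}h(i,e)\le 1$, each cluster lies in at most three edges of $\cK$ carrying positive weight. Hence the set $\cK'$ of such edges satisfies $|\cK'|\le 3t$ (in fact $|\cK'|\le t$, but $3t$ suffices), and summing the per-edge leftover of at most $9\eps|V_1|$ over at most $3t$ edges gives exactly the $27t\eps|V_1|$ appearing in the statement. There is no danger of a $t^2\eps$ term and no need to reorganise the accounting by clusters; the loss is automatically linear in $t$ precisely because $h_{\min}\ge 1/3$ bounds the number of active edges. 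This is the paper's argument verbatim.
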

\begin{proof}
We restrict our consideration to the subhypergraph $\cK'\subset \cK$ consisting of the hyperedges 
with positive weight, i.e., $e=abc\in\cK$ with $h(a), h(b),h(c)\geq h_{\min}$. 
For each $a\in V(\cK')$ let $V_a$ be the corresponding  partition class in $\cQ$. Due to the property~\ref{en:pk2} of Definition~\ref{def:frachomM} 
we can subdivide $V_a$ (arbitrarily) into a collection
of pairwise disjoint sets  $(U_a^e)_{a\in e\in \cK}$ of size $|U_a^e|=h(a,e)|V_a|$. 
Note that every edge $e=abc\in\cK$ corresponds to the $(\eps,6\eps)$-regular triplet $(V_a,V_b,V_c)$. 
Hence we obtain from the definition of regularity  and $h_{\min}\geq 1/3$ that the triplet $(U_a^e,U_b^e,U_c^e)$
is  $(3\eps,6\eps)$-regular. From the property~\ref{en:pk3} in Definition~\ref{def:frachomM} and Proposition~\ref{prop:embedM} we obtain an $\cM$-tiling of 
$(U_a^e,U_b^e,U_c^e)$  incorporating at least $\big(h(a,e)+h(b,e)+h(c,e)-9\eps\big)|V_a|$ vertices.
Applying this to all hyperedges of $\cK'$ we obtain an $\cM$-tiling incorporating at least
\[\left(\sum_{abc=e \in \cK'} h(a,e)+h(b,e)+h(c,e)-9\eps\right)|V_a|\geq \big(w(h)-9|\cK'|\eps\big)|V_a|\]
vertices. Noting that $|\cK'|\leq 3t$ (because of $h_{\min}\geq 1/3$) and $|V_a|\geq |V_1|$ we obtain the proposition.
\end{proof}

Owing to Proposition~\ref{prop:Mfrac2int}, we are given a connection between 
fractional $\hom(\cM)$-tilings of the cluster hypergraph $\cK$ of $\cH$ and $\cM$-tilings in $\cH$. 
A vertex $i \in V(\cK)$ corresponds to a class of vertices $V_i$ in the regular partition of $\cH$. 
The total vertex weight $h(i)$ essentially translates to the proportion of vertices of $V_i$
which can be covered by the corresponding $\cM$-tilings in $\cH$.
Consequently, $w(h)$ essentially translates to the proportion of vertices covered by the corresponding 
$\cM$-tiling in~$\cH$.
This reduces our task to finding a fractional $\hom(\cM)$-tiling with weight greater than the number of vertices previously covered in $\cK$.

The following lemma (Lemma~\ref{lem:frachomextent}),
which is the main tool for the proof of Lemma~\ref{lem:Mtiling}, follows the idea discussed above.
In the proof of Lemma~\ref{lem:Mtiling} we fix a maximal $\cM$-tiling 
in the cluster hypergraph $\cK$ of the given hypergraph~$\cH$. Owing to the minimum degree
condition of $\cH$ and Proposition~\ref{prop:clustermindeg},
a typical vertex in the cluster hypergraph $\cK$ will be 
contained in at least $(7/16+o(1))\binom{|V(\cK)|}{2}$ hyperedges of $\cK$.
We will show that a typical vertex $u$ of $\cK$ which is not covered by
the maximal $\cM$-tiling of $\cK$, has the property that 
$(7/16+o(1))\cdot 64 > 28$ of the edges incident to $u$ intersect 
some pair of copies of $\cM$ from the $\cM$-tiling of~$\cK$.  
Lemma~\ref{lem:frachomextent}  asserts that two such  vertices and the pair of copies of $\cM$ can be used to obtain a 
fractional $\hom(\cM)$-tiling with a weight significantly larger than~16, the number of vertices of the two copies of $\cM$.
This lemma will come in handy in the proof of  Lemma~\ref{lem:Mtiling},
where it is used to show that one can cover a higher proportion of the vertices of $\cH$
than the proportion of vertices covered by the largest $\cM$-tiling in~$\cK$.

We consider a set of hypergraphs $\dL_{29}$ definied as follows: Every $\cL\in\dL_{29}$
consists of two (vertex disjoint) copies of $\cM$, say $\cM_1$ and $\cM_2$,
and two additional vertices~$u$ and $v$ such that all edges incident to $u$ or $v$ contain precisely one vertex from $V(\cM_1)$ and 
one vertex from $V(\cM_2)$. Moreover, $\cL$ satisfies the following properties
\begin{itemize}
\item for every $a\in V(\cM_1)$ and $b\in V(\cM_2)$ we have $uab \in E(\cL)$ iff $vab \in E(\cL)$
\item $\deg(u)=\deg(v) \geq 29$.
\end{itemize}

\begin{lemma}
\label{lem:frachomextent}
For every $\cL \in \dL_{29}$ there exists a fractional $\hom(\cM)$-tiling $h$ with 
$h_{\min}\geq 1/3$ and $w(h) \geq 16 + \frac{1}{3}$.
\end{lemma}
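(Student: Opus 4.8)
The plan is to pass to the auxiliary bipartite graph $S$ on vertex classes $V(\cM_1)$ and $V(\cM_2)$ whose edges are the pairs $(a,b)$ with $uab\in E(\cL)$ — equivalently, by the defining property of $\dL_{29}$, with $vab\in E(\cL)$; since every edge of $\cL$ through $u$ meets each of $V(\cM_1)$, $V(\cM_2)$ in exactly one vertex, $|S|=\deg_{\cL}(u)\ge 29$. The only other ingredient is Fact~\ref{fact:homM}, which gives a fractional $\hom(\cM)$-tiling of one copy of $\cM$ with $h_{\min}\ge 1/3$ and weight $8$; I will place (slightly modified) copies of it on the parts of $\cM_1$ and $\cM_2$ left after removing a few vertices that are used for edges through $u$ and $v$, and add a small amount of weight through $u$ and $v$.

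The structure of $S$ I want to exploit is a $4$-cycle. Since $29$ exceeds the Zarankiewicz number $z(8,8;2,2)$ — indeed $\sum_{a\in V(\cM_1)}\binom{d_S(a)}2\ge 8\binom{29/8}2>\binom82$ — there are $a_1,a_2\in V(\cM_1)$ and $b_1,b_2\in V(\cM_2)$ with all four $(a_i,b_j)\in S$, so each of $ua_ib_j$ and $va_ib_j$ is an edge of $\cL$. Recalling that $\operatorname{Aut}(\cM)$ has three orbits — the four "ends'', the two "link'' vertices (degree $2$, two edges meeting in one vertex), and the two "central'' vertices — there are only finitely many essentially different positions of the pairs $\{a_1,a_2\}$ and $\{b_1,b_2\}$ inside $\cM_1$ and $\cM_2$. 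One checks that deleting a pair of vertices from $\cM$ still leaves all remaining vertices coverable with weight $6$ exactly when the pair is $\{x_1,x_2\}$ (or $\{z_1,z_2\}$) or $\{y_1,y_2\}$, while deleting a single end forces its twin to be dropped, and deleting a link vertex destroys half of the edges. The proof then splits according to which orbits $\{a_1,a_2\}$ and $\{b_1,b_2\}$ meet. In the favourable cases — e.g.\ $\{a_1,a_2\}=\{x_1,x_2\}$ and $\{b_1,b_2\}=\{x_1,x_2\}$ in the respective labellings — one puts the Fact~\ref{fact:homM}-tiling on $\cM_1\setminus\{a_1,a_2\}$ and $\cM_2\setminus\{b_1,b_2\}$, uses $ua_1b_1$ and $va_2b_2$ with weights $(1,1,1)$, and already gets weight $18$. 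When only one side is well placed one keeps that copy almost intact but reroutes weight through $u$ and $v$ into an \emph{extended} copy of $\cM$: if $(w,b)\in S$ for a link vertex $w$ of $\cM_1$ and some $b\in V(\cM_2)$, then $(\cM_1\setminus\{\text{the two ends joined to }w\})\cup\{u,b\}$ together with the edge $uwb$ is again a copy of $\cM$, and similarly with $v$. The plan in the harder cases is to use that $|S|\ge 29$ to place two vertex-disjoint copies of $\cM$ (possibly extended) covering $16$ of the $18$ vertices with weight $16$, and then to slacken these copies at a link vertex or an end so that the freed weight $\tfrac13$ is instead supplied by an unused internal edge of $\cM_1$ or $\cM_2$ reaching one of the two left-over vertices, for a total of $16+\tfrac13$.

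The main obstacle is the degenerate case in which every $4$-cycle of $S$ is forced onto link vertices: there the naive local modifications of the default tiling all lose at least as much weight as they gain, and one genuinely needs $\deg(u)\ge 29$ — not merely the existence of one $4$-cycle — to find enough slack among the many $4$-cycles of $S$ to realise the covering above. Verifying that the resulting weight is always at least $16+\tfrac13$, uniformly over all admissible link patterns $S$, is where essentially all the work lies; the remaining cases follow routinely from Fact~\ref{fact:homM} and the combinatorics of $\cM$.
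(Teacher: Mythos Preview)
Your opening move is on target: the paper also passes to the bipartite graph $L$ (your $S$) on $V(\cM_1)\times V(\cM_2)$, and the favourable case where a matching of size two sits on a pair like $\{x_1,x_2\}\times\{x'_1,x'_2\}$ (your weight-$18$ example) is essentially the paper's first subcase. But your proposal remains a sketch, and the gap is exactly where you flag it. Once every $K_{2,2}$ is forced to meet the link vertices $w_1,w_2,w'_1,w'_2$---and this can happen, since for $|S|=28$ the pattern $S=F:=\{ab: a\in\{w_1,w_2\}\text{ or }b\in\{w'_1,w'_2\}\}$ already contains many $K_{2,2}$'s but none that helps---you only say that one must ``find enough slack among the many $4$-cycles'' and that ``verifying that the resulting weight is always at least $16+\tfrac13$\dots is where essentially all the work lies''. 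That is a description of what remains, not a proof; a single $K_{2,2}$, even combined with your extended-$\cM$ trick, does not suffice in general, and you have not said which additional structure of $S$ you would exploit or why $29$ (rather than $28$) edges are decisive.

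The paper bypasses the $K_{2,2}$ search and instead splits $L=L_1\dcup L_2$, where $L_2\subseteq F$ collects the pairs meeting $\{w_1,w_2,w'_1,w'_2\}$, so trivially $|L_2|\le |F|=28$. A short case analysis shows that if no tiling of weight $\ge 16+\tfrac13$ exists then $L_1$ is contained in a $K_{3,3}$ hitting each of $X,Y,Z,X',Y',Z'$ in exactly one vertex. Then for every edge $ab\in L_1$ one exhibits, \emph{injectively}, a ``forbidden'' pair $f(a,b)\in F$ whose simultaneous presence in $L_2$ would already yield such a tiling. Since $|L_1|+|L_2|\ge 29>28=|F|$ and $L_2\subseteq F$, the image $f(L_1)$ must intersect $L_2$, finishing the proof. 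This converts the problem into one pigeonhole step backed by about nine explicit small tiling pictures---precisely the organising idea your sketch is missing for the degenerate case.
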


The following proof of Lemma~\ref{lem:frachomextent} is based on straightforward, but 
somewhat tedious case distinction.

\begin{proof}
For the proof we fix the following labeling of the vertices of the two disjoint copies of $\cM$.
Let 
\[ V(\cM_1)=\{x_1,x_2,w_1,y_1,y_2,w_2,z_1,z_2\}\!\qand\!
   E(\cM_1)=\big\{x_1x_2w_1,\, w_1y_1y_2,\, y_1y_2w_2,\, w_2z_1z_2\big\} 
\] 
be the vertices 
and edges of the first copy of $\cM$. Analogously, let 
\[
	V(\cM_2)=\{x_1',x_2',w_1',y_1',y_2',w_2',z_1',z_2'\}\!\qand\! 
	E(\cM_2)=\big\{x'_1x'_2w'_1,\,w'_1y'_1y'_2,\,y'_1y'_2w'_2,\,w'_2z'_1z'_2\big\} 
\]
be the vertices and edges of the 
other copy of $\cM$ (see Figure~\ref{pic:basic}). 
Moreover, we set $X=\{x_1,x_2\}$,
$Y=\{y_1,y_2\}$, and $Z=\{z_1,z_2\}$ and, let $X'$, $Y'$, and $Z'$ be defined analogously
for $\cM_2$. 

\begin{figure}[htp]\small
	\caption{Labels and case: $a_1b_1$, $a_2b_2\in L_1$ with 
			$\{b_1,b_2\}\in \{X',Y',Z'\}$}
	\begin{subfigure}[t]{.47\linewidth}
		\setcounter{figure}{1}
		\centering
    	\includegraphics{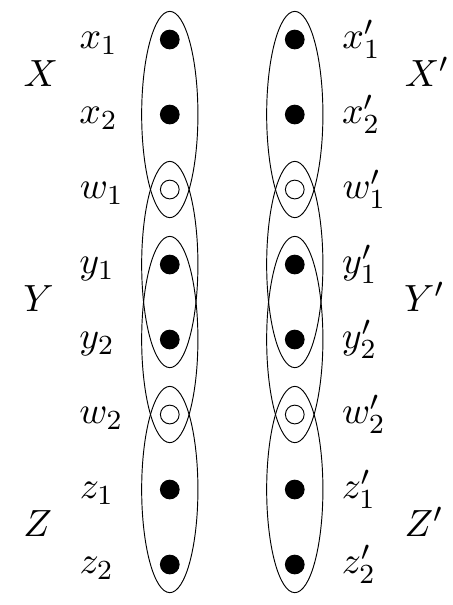}
  		\caption{Vertex labels of $\cM_1$ and $\cM_2$ in $\cL$}
  		\label{pic:basic}
	\end{subfigure}
	\hfill
	\begin{subfigure}[t]{.47\linewidth}
		\centering
    	\includegraphics{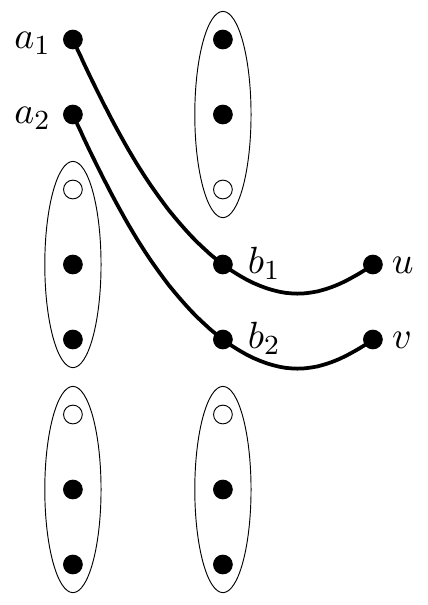}
		\caption{All edges are (a1)-edges}
  		\label{pic:case1}
	\end{subfigure}
	\setcounter{subfigure}{0}
\end{figure}
The proof of Lemma~\ref{lem:frachomextent} proceeds in two steps.
First, we show that in any possible counterexample $\cL$, the edges incident to 
$u$ and $v$ which do not contain any vertex from $\lbrace w_1,w_2,w_1',w_2' \rbrace$ form a subgraph of $K_{2,3,3}$ (see Claim~\ref{claim:enno1}). In the second step we show that 
every edge contained in this subgraph of $K_{2,3,3}$ forbids too many other edges incident to $u$ and $v$, which will yield a contradiction to the condition 
$\deg(u)=\deg(v)\geq 29$ of $\cL$ (see Claim~\ref{claim:enno2}).

We introduce the following notation to simplify later arguments. 
For a given $\cL\in\dL_{29}$ with~$\cM_1$ and $\cM_2$ being the copies of $\cM$, let~$L$ be set of the  pairs $(a,b)\in  V(\cM_1)\times V(\cM_2)$ such that $uab \in E(\cL)$.
We split $L$ into $L_1\dcup L_2$ according to 
\[
(a,b) \in
\begin{cases} 
	L_1, & \text{if $\{a,b\}\cap \{w_1,w_2, w_1',w_2' \}=\emptyset$},\\ 
	L_2, &\text{otherwise.}
\end{cases}
\]
It will be convenient to view $L_1$ and $L_2$ as bipartite graphs 
with vertex classes $V(\cM_1)$ and~$V(\cM_2)$.

We split the proof of Lemma~\ref{lem:frachomextent} into the following two claims.

\begin{claim}\label{claim:enno1}
For all $\cL \in \dL_{29}$ without a fractional $\hom(\cM)$-tiling with $h_{\min}\geq 1/3$ and 
$w(h) \geq 16 + 1/3$, we have $L_1 \subseteq K_{3,3}$, where each of the sets 
$X$, $Y$, $Z$ and $X'$, $Y'$, $Z'$ contains precisely one of the vertices of the 
$K_{3,3}$.
\end{claim}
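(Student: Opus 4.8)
The plan is to prove the contrapositive: assuming that $L_1$ does \emph{not} lie inside a copy of $K_{3,3}$ meeting each of $X,Y,Z,X',Y',Z'$ in exactly one vertex, I will produce a fractional $\hom(\cM)$-tiling $h$ of $\cL$ with $h_{\min}\ge 1/3$ and $w(h)\ge 16+\tfrac13$. The first step is to reformulate the hypothesis combinatorially. Regarding $L_1$ as a bipartite graph between $V(\cM_1)\setminus\{w_1,w_2\}=X\cup Y\cup Z$ and $V(\cM_2)\setminus\{w_1',w_2'\}=X'\cup Y'\cup Z'$, one checks that $L_1$ can be covered by such a $K_{3,3}$ if and only if its set of endpoints on each side contains at most one vertex from each of the three pairs on that side; hence ``$L_1\not\subseteq K_{3,3}$'' means that on one side — say the $\cM_1$-side, after using the symmetry of $\dL_{29}$ exchanging $\cM_1$ and $\cM_2$ — the endpoints of $L_1$ include \emph{both} vertices of some pair. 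Using the automorphisms of $\cM$ (swapping the two vertices within any of $X,Y,Z$, and the reflection exchanging $X\leftrightarrow Z$ and $w_1\leftrightarrow w_2$) I then reduce to two cases: $L_1$ contains edges $x_1b_1,\,x_2b_2$ (the ``end-pair'' case), or $L_1$ contains edges $y_1b_1,\,y_2b_2$ (the ``middle-pair'' case), where $b_1,b_2\in X'\cup Y'\cup Z'$.

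The core construction starts from Fact~\ref{fact:homM}, which supplies on each of $\cM_1$ and $\cM_2$ a fractional $\hom(\cM)$-tiling of weight $8$ in which every vertex carries total weight $1$; together these give weight $16$, and the task is to reroute a little weight through $u$ and $v$ to gain $1/3$ more. In the end-pair case I lower the weights on the edge $x_1x_2w_1$ from $(1,1,\tfrac23)$ to the admissible pattern $(\tfrac23,\tfrac23,\tfrac23)$: this keeps $w_1$ at total weight $1$ but frees $1/3$ of room at each of $x_1$ and $x_2$. Since $ucd\in E(\cL)\iff vcd\in E(\cL)$, both $ux_1b_1$ and $vx_2b_2$ are edges of $\cL$; assigning each the pattern $(\tfrac13,\tfrac13,\tfrac13)$ restores $x_1,x_2$ to weight $1$ and puts weight $1/3$ on each of $u$ and $v$, provided the matching $1/3$ of room exists at $b_1$ and $b_2$ — which it does when $\{b_1,b_2\}$ is one of $X',Y',Z'$, after lowering the corresponding edge of $\cM_2$ to $(\tfrac23,\tfrac23,\tfrac23)$ as well. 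This yields $w(h)=16+\tfrac23\ge 16+\tfrac13$. The middle-pair case is analogous, lowering both $w_1y_1y_2$ and $y_1y_2w_2$ from $(\tfrac12,\tfrac12,\tfrac13)$ to $(\tfrac13,\tfrac13,\tfrac13)$, which again keeps $w_1,w_2$ at weight $1$ and frees $1/3$ at each of $y_1,y_2$. Only the weights $1,\tfrac23,\tfrac12,\tfrac13$ ever occur, so $h_{\min}\ge 1/3$; every modified edge still admits the labelling demanded by~\ref{en:pk3}, and conditions~(a) and~\ref{en:pk2} are immediate.

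The genuinely tedious part is that this clean rerouting only works losslessly when $b_1$ and $b_2$ can be paired on the $\cM_2$-side the way $x_1,x_2$ are on the $\cM_1$-side. When $b_1=b_2$, or when $b_1$ and $b_2$ lie in different pairs among $X',Y',Z'$, freeing $1/3$ at each of them by the naive trick wastes weight on their unused partners, and a direct count shows the naive construction can drop below $16$. I expect to handle these configurations by being more economical on the $\cM_2$-side — shifting weight onto the $w$-vertices rather than zeroing whole edges — and, when $L_1$ is too sparse for that to suffice, by exploiting that $\deg(u)=\deg(v)\ge 29$ forces $L_2$ (the part of the common link of $u$ and $v$ using one of $w_1,w_2,w_1',w_2'$) to contain almost all of its at most $28$ admissible pairs, so that the rerouted weight can instead be carried through the $w$-vertices. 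Organising this case distinction so that it stays finite — which is exactly what exhausting the $\cM$-automorphisms and the $\cM_1\leftrightarrow\cM_2$ symmetry at the outset achieves — and then verifying~\ref{en:pk3} and the bound $h(v)\le 1$ in each remaining case, is where essentially all the work lies.
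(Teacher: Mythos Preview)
Your reformulation of ``$L_1\not\subseteq K_{3,3}$'' and the reduction by symmetry are correct, and your perturbation of the Fact~\ref{fact:homM} tiling does yield $w(h)=16+\tfrac23$ when $\{b_1,b_2\}$ is one of the pairs $X',Y',Z'$ (the matching sub-case). The gap is in the remaining sub-cases $b_1=b_2$ and $b_1,b_2$ in distinct pairs. There your perturbation approach provably stalls at $w(h)\le 16$: to free $1/3$ at a single vertex such as $x_1'$ one must, by condition~\ref{en:pk3}, lower the entire edge $x_1'x_2'w_1'$ to the pattern $(\tfrac23,\tfrac23,\tfrac23)$, and the $1/3$ freed at $x_2'$ is wasted since no available edge touches it. A direct computation for, say, $a_1b_1=x_1x_1'$, $a_2b_2=x_2z_1'$ gives exactly $16$, not $16+\tfrac13$. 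Your suggested remedy of ``shifting weight onto the $w$-vertices'' does not help here because $w_1',w_2'$ already carry total weight~$1$ in the base tiling, and your fallback to $L_2$ via the bound $\deg(u)\ge 29$ imports the content of Claim~\ref{claim:enno2} into Claim~\ref{claim:enno1}; besides being only sketched, it is unnecessary --- the paper proves Claim~\ref{claim:enno1} using nothing beyond the assumed $L_1$-edges and the edges of $\cM_1,\cM_2$.

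What you are missing is that one should not perturb the Fact~\ref{fact:homM} tiling at all on the $\cM_1$-side. The key observation is that for any $A\in\{X,Y,Z\}$ the hypergraph $\cM_1-A$ still contains two disjoint edges; putting weight~$1$ on every vertex of those two edges already accounts for six vertices of $\cM_1$ and leaves both vertices of $A$ \emph{completely} free. One can then place the much heavier pattern $(1,1,\tfrac23)$ on the edges $ua_1b_1$ and $va_2b_2$ (rather than your $(\tfrac13,\tfrac13,\tfrac13)$), so that only $1/3$ of room is needed at each of $b_1,b_2$. With this extra slack the $\cM_2$-side can be retiled case by case (using the edge types (a1), (a2), (b1), (b2), (b3) the paper introduces) to reach $w(h)=17$ when $b_1,b_2$ lie in different pairs and $w(h)=16+\tfrac13$ when $b_1=b_2$; the matching sub-case even gives $18$ via six disjoint (a1)-edges. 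All of this uses only the two $L_1$-edges, so the degree condition never enters the proof of this claim.
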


Claim~\ref{claim:enno1} will be used in the proof of the next claim, which clearly implies 
Lemma~\ref{lem:frachomextent}.

\begin{claim}\label{claim:enno2}
Let $F=\big\{a'b'\in V(\cM_1)\times V(\cM_2)\colon\  a'\in\{w_1,w_2\} \text{ or } b'\in\{w'_1,w'_2\}\big\}$
and for every edge  $ab\in L_1$ let  $\Forb(a,b)\subseteq F$ be the set of those
$e\in F$, whose appearance in $\cL$ (i.e.\ $e\in L_2$)  implies the existence of a fractional $\hom(\cM)$-tiling~$h$ 
with $h_{\min}\geq 1/3$ and $w(h)\geq 16+1/3$.
Then there is an  injection $f\colon L_1\to F $ such that $f(a,b)\in\Forb(a,b)$ for every 
pair $ab\in L_1$.
\end{claim}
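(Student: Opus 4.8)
The plan is to build the injection $f\colon L_1\to F$ edge by edge, using Claim~\ref{claim:enno1} to control the structure of $L_1$, and then to verify that the ``forbidden'' sets $\Forb(a,b)$ are large and disjoint enough to admit a system of distinct representatives. First I would invoke Claim~\ref{claim:enno1}: $L_1$ is a subgraph of a single $K_{3,3}$ whose two vertex classes are $\{x,y,z\}$ and $\{x',y',z'\}$, one chosen vertex from each of $X,Y,Z$ and each of $X',Y',Z'$. So $|L_1|\le 9$, and in fact, since $\deg_\cL(u)=\deg_\cL(v)\ge 29$ and the edges counted by $\deg(u)$ split as $|L_1|+|L_2|$ with $|L_2|\le|F|$, and $|F|=2\cdot 8+2\cdot 8-4=28$ (pairs meeting $\{w_1,w_2\}$ or $\{w_1',w_2'\}$, corrected for double counting), we must have $|L_1|\ge 1$; more carefully the bound $29\le |L_1|+|L_2|$ forces $|L_1|$ to be positive and, once we show each edge of $L_1$ kills a distinct element of $F$ via the injection, the count $|L_1|+|L_2|\le |L_1| + (|F|-|L_1|) = 28 < 29$ gives the contradiction that proves Lemma~\ref{lem:frachomextent}.

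The core of the argument is the construction of $\Forb(a,b)$ and of $f$. For each edge $ab\in L_1$, say with $a\in X$ (i.e.\ $a\in\{x_1\}$ after fixing labels) and $b\in X'$, I would exhibit one or more specific edges $e\in F$ such that: if $e=a'b'\in L_2$ as well, then one can assemble from $\cM_1$, $\cM_2$, the vertices $u,v$, and the two edges $uab$, $ua'b'$ (together with their twins $vab$, $va'b'$ guaranteed by the first bullet in the definition of $\dL_{29}$) a fractional $\hom(\cM)$-tiling $h$ with $h_{\min}\ge 1/3$ and $w(h)\ge 16+1/3$. The recipe: start from the weight-$8$ tiling of each of $\cM_1,\cM_2$ from Fact~\ref{fact:homM} (total weight $16$), then locally reroute weight near the four edges $uab$, $vab$, $ua'b'$, $va'b'$ to pick up an extra $1/3$ using $u$ and $v$. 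Because $a'$ or $b'$ lies in $\{w_1,w_2,w_1',w_2'\}$, which are exactly the ``light'' vertices (weight $2/3$ resp.\ $1/3$ and appearing in two edges of $\cM$), there is slack at $a'$ or $b'$ to shift weight onto $u$ and $v$ without violating (b) or (c); the gain of $1/3$ comes from assigning weight $1/3$ at $u$ and at $v$ to the new edges. I would tabulate, for each of the $9$ possible positions of $ab$ in the $K_{3,3}$ and each of the two ``legs'' ($a\in X,Y,Z$ paired appropriately, $b\in X',Y',Z'$), which $e\in F$ works — this is the ``tedious case distinction'' the authors warn about — and arrange the choices so that distinct edges $ab$ of $L_1$ are matched to distinct $e\in F$. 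Concretely I would partition $F$ into blocks indexed by which of $w_1,w_2$ (or $w_1',w_2'$) the edge touches and on which side, match each row/column of the $K_{3,3}$ to its own block, and within a block pick representatives greedily; a short Hall-type check then yields the injection.

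The main obstacle will be the bookkeeping in the case analysis: one must check that the rerouting genuinely preserves all three defining properties of a fractional $\hom(\cM)$-tiling — in particular property (c), the $2/3$-ratio condition on every edge, which is delicate precisely at the $w$-vertices where we are moving weight — and that the assignment of forbidden edges can be made injective rather than merely nonempty for each edge. I expect the clean way to organize this is to treat the ``$a\in\{w_1,w_2\}$'' columns of $F$ and the ``$b\in\{w_1',w_2'\}$'' rows of $F$ symmetrically, to note that an edge $ab\in L_1$ with $a$ a chosen vertex of $X$ (say) forbids the edges of $F$ in the same ``strand'' of $\cM_1$ through $x_1x_2w_1$, giving at least as many forbidden edges as there are edges of $L_1$ in that strand, and then to invoke Hall's theorem on the resulting bipartite ``$L_1$ versus $F$'' incidence structure. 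With the injection in hand, $|L_2|\le |F\setminus f(L_1)| = 28-|L_1|$, so $\deg(u)=|L_1|+|L_2|\le 28$, contradicting $\deg(u)\ge 29$; hence no counterexample $\cL$ exists and Lemma~\ref{lem:frachomextent} follows.
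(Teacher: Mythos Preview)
Your overall strategy matches the paper: reduce via Claim~\ref{claim:enno1} to $L_1\subseteq K_{3,3}$ on $\{x_1,y_1,z_1\}\times\{x_1',y_1',z_1'\}$, then for each of the (at most nine) edges $ab\in L_1$ exhibit a specific $f(a,b)\in\Forb(a,b)\subseteq F$ so that $f$ is injective. The paper does exactly this, listing the nine values of $f$ explicitly and verifying membership in $\Forb(a,b)$ by drawing the required tilings.

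The genuine gap is in your recipe for producing the tilings. Your plan is to start from the two weight-$8$ tilings of Fact~\ref{fact:homM} and ``locally reroute'' weight, gaining $1/3$ by putting weight $1/3$ on $u$ and on $v$ in the new edges. This does not work as stated: in the base tiling every vertex of $X\cup Y\cup Z\cup X'\cup Y'\cup Z'$ already has total weight exactly $1$, so any edge $uab$ receiving positive weight forces (by condition~\ref{en:pk3}) positive weight at $a$ and $b$, which violates condition~\ref{en:pk2} unless you first \emph{remove} weight at $a$ and $b$ elsewhere --- and removing it while preserving condition~\ref{en:pk3} on the affected $\cM_i$-edges costs at least as much as you gain. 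The paper's constructions are not perturbations of the base tiling at all: they typically assign full $(1,1,1)$-weight to one of the cross-edges (e.g.\ $ux_1x_1'$), drop an entire edge of $\cM_1$ or $\cM_2$, and rebalance the remaining edges with mixtures of the types (a1)--(b3), sometimes achieving $w(h)=17$ rather than $16+\tfrac13$. The ``slack at the $w$-vertices'' you invoke is real, but it is exploited in a more global way: for instance, to show $x_2w_1'\in\Forb(x_1,x_1')$, one uses $ux_1x_1'$ as an (a1)-edge, $vx_2w_1'$ as a (b1)-edge, keeps $w_1y_1y_2$ and $w_2z_1z_2$ as (a1)-edges (so the edge $x_1x_2w_1$ of $\cM_1$ is abandoned entirely), and spreads the remaining $\cM_2$-weight as (b1)/(b2)-edges. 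Each of the nine cases needs such a tailored construction, and your proposal does not supply one.

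A secondary issue: the Hall-type argument you sketch for the injection is unnecessary and not obviously set up correctly --- you have not shown that $|\Forb(a,b)|$ is large enough, or that the sets $\Forb(a,b)$ satisfy Hall's condition. The paper sidesteps this by finding, for each $ab$, \emph{two} elements of $\Forb(a,b)$ (except for $y_1y_1'$, where one suffices), and then simply writing down nine pairwise distinct representatives.
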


Clearly, $|F|=28$ and  $L_2\subset F$. Hence, from $|L_2|+|f(L_1)|=|L_2|+|L_1|\geq 29$ we derive that $L_2$ and $f(L_1)$ must intersect. By Claim~\ref{claim:enno2} 
this yields the desired  fractional $\hom(\cM)$-tiling and Lemma~\ref{lem:frachomextent} follows.
\end{proof}

In the proofs of Claim~\ref{claim:enno1} and Claim~\ref{claim:enno2} we will consider fractional $\hom(\cM)$-tilings $h$ which use 
vertex weights of special types. In fact, for an edge $e = a_1a_2a_3$, 
the weights $h(a_1,e)$,  $h(a_2,e)$, and $h(a_3,e)$ will be of the following forms
\begin{itemize}
\item[(a1)] $h(a_1, e) = h(a_2,e) = h(a_3,e) = 1$ 
\item[(a2)] $h(a_1, e) = h(a_2,e) = h(a_3,e) = \frac{1}{2}$
\item[(a3)] $h(a_1, e) = h(a_2,e) = h(a_3,e) = \frac{1}{3}$
\item[(b1)] $h(a_1, e) = h(a_2,e) = 1$ and  $h(a_3,e) =  \frac{2}{3}$
\item[(b2)] $h(a_1, e) = h(a_2,e) = \frac{1}{2}$ and $h(a_3,e) =  \frac{1}{3}$
\item[(b3)] $h(a_1, e) = h(a_2,e) = \frac{2}{3}$ and $h(a_3,e) =  \frac{1}{2}$
\end{itemize}
An edge that satisfies (a1) is called an (a1)-edge, etc. Note that all these 
types satisfy condition~\ref{en:pk3} of Definition~\ref{def:frachomM}.

\begin{proof}[Proof of Claim~\ref{claim:enno1}]
Given $\cL\in\dL_{29}$ satisfying the assumptions of the claim 
and with the labeling from Figure~\ref{pic:basic}.
Observe that for any $A \in \lbrace X,Y,Z \rbrace$, the hypergraph 
$\cM_1-A$ contains two disjoint edges. Similarly, 
for every $B \in \lbrace X',Y',Z' \rbrace$, $\cM_2-B$ contains two disjoint edges.

\begin{figure}[ht]\small
	\caption{Case: $ab_1$, $ab_2\in L_1$ with $\{b_1,b_2\}\in\{X',Y',Z'\}$}
	\begin{subfigure}[t]{.47\linewidth}
		\setcounter{figure}{2}
		\centering
    	\includegraphics{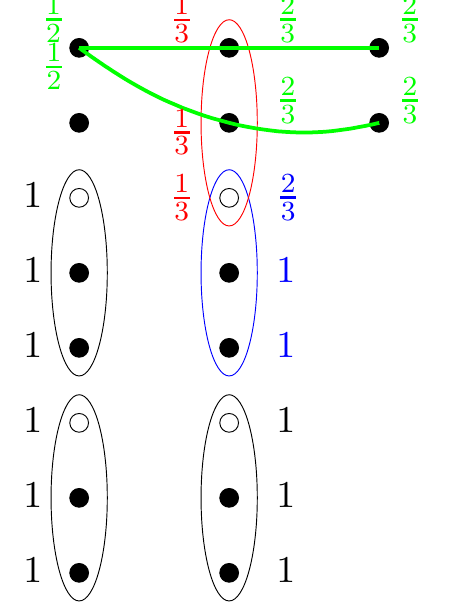}
  		\caption{(a1)-edges $w_1y_1y_2$, $w_2z_1z_2$, and $w'_2z'_1z'_2$, 
			(b3)-edges $ax_1'u$ and $ax_2'v$,  
			(b1)-edge $w_1'y_1'y_2'$, and (a3)-edge $x_1'x_2'w_1$.}
  		\label{pic:case2}
	\end{subfigure}
	\hfill
	\begin{subfigure}[t]{.47\linewidth}
		\centering
    	\includegraphics{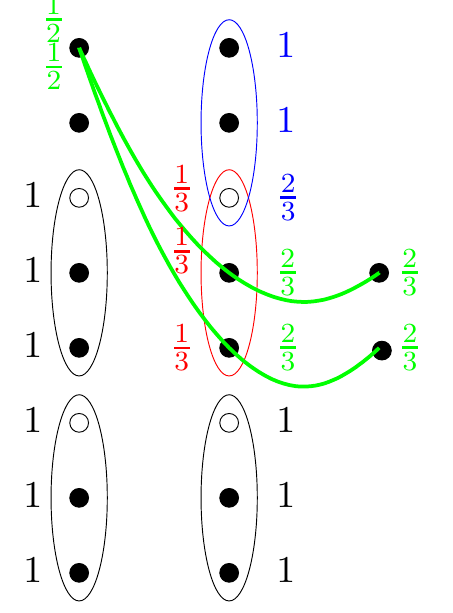}
  		\caption{(a1)-edges $w_1y_1y_2$, $w_2z_1z_2$, and $w'_2z'_1z'_2$,
				(b3)-edges $ay_1'u$ and $ay_2'v$, (b1)-edge $x_1'x_2'w_1'$, and 
				(a3)-edge $w_1'y_1'y_2'$.}
  		\label{pic:case2a}
	\end{subfigure}
	\setcounter{subfigure}{0}
\end{figure}
First we exclude the case that there is a matching $\lbrace a_1b_1, a_2b_2 \rbrace$ of size two in $L_1$ between some $
\{a_1,a_2\}=A\in \lbrace X,Y,Z \rbrace$ and some
$\{b_1,b_2\}=B \in \lbrace X',Y',Z' \rbrace$. In this case we can construct 
a fractional $\hom(\cM)$-tiling $h$ as follows: Choose two edges $ua_1b_1, va_2b_2$. 
Using these and the four disjoint edges in $(\cM_1-A)\dcup (\cM_2-B)$, we obtain six disjoint edges 
(see Figure~\ref{pic:case1}). 
Letting all these six edges be (a1)-edges, we obtain a fractional $\hom(\cM)$-tiling~$h$ with $h_{\min}=1$ and $w(h) = 18$.

We show that every $a \in A \in \lbrace X, Y, Z \rbrace$ has at most 
one neighbour in each $B \in \lbrace X', Y', Z' \rbrace$.  Assuming the contrary, let
 $a \in A \in \lbrace X, Y, Z \rbrace$ and 
$\lbrace b_1, b_2 \rbrace = B \in \lbrace X', Y', Z' \rbrace$, 
with $ab_1$, $ab_2 \in L_1$. For symmetry reasons, we only have
to consider the case $B=X'$ and~$B=Y'$. 
The case $B = Z'$ is symmetric to $B=X'$. 
In those cases, we choose $h$ as shown in  Figure~\ref{pic:case2} ($B=X'$)
and Figure~\ref{pic:case2a} ($B=Y'$) and in either case we find a fractional 
$\hom(\cM)$-tiling $h$ satisfying $h_{\min}=1/3$ and
$w(h) = 16 + 1/3$. Note that the cases $A=Y$ and $A=Z$ can be treated in the same manner since the only 
condition needed to define $h$ is that  $\cM_1-A$ contains two disjoint edges. 

To show that $L_1$ is indeed contained in a $K_{3,3}$ it remains to verify that every
 $a_1b_1$, $a_2b_2$ with 
$\{a_1, a_2 \}= A \in \{ X,Y,Z \}$, $b_1 \in B_1 \in 
\{ X',Y',Z' \}$, and $b_2 \in B_2 \in \lbrace X',Y',Z' \rbrace \setminus B_1 $ 
guarantees the existence of a fractional 
$\hom(\cM)$-tiling~$h$ with $h_{\min}\geq 1/3$ and $w(h) \geq 16 + 1/3$.
Again owing to the symmetry, the only cases we need to consider are $B_1 = X', B_2 = Y'$ (see Figure~\ref{pic:case3}) and $B_1 = X', B_2 = Z'$ (see Figure~\ref{pic:case3a}). 
In fact, the fractional 
$\hom(\cM)$-tilings~$h$ given in Figure~\ref{pic:case3} and Figure~\ref{pic:case3a}
satisfy $h_{\min}\geq 1/3$ and $w(h)=17$. Again the cases $A=Y$ and $A=Z$ can be treated in the same manner.
This concludes the proof of Claim~\ref{claim:enno1}.
\begin{figure}[thp]\small
	\caption{Case: $a_1b_1$, $a_2b_2\in L_1$ with $\{b_1,b_2\}\notin\{X',Y',Z'\}$}
	\begin{subfigure}[t]{.47\linewidth}
		\setcounter{figure}{3}
		\centering
    	\includegraphics{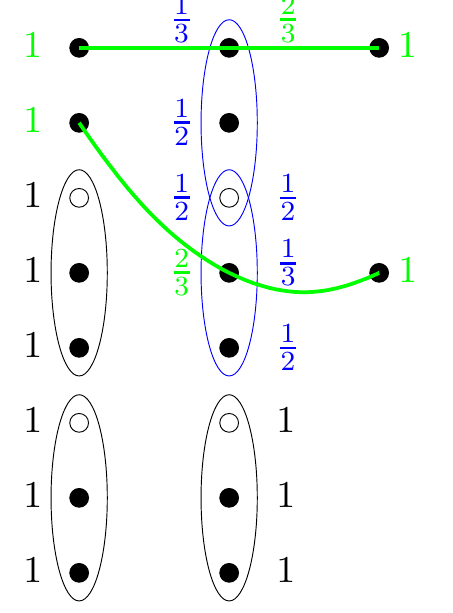}
  		\caption{(a1)-edges $w_1y_1y_2$, $w_2z_1z_2$, and $w'_2z'_1z'_2$,
			(b1)-edges $a_1x_1'u$ and $a_2y_1'v$, and
			(b2)-edges $x_1'x_2'w_1'$ and $w_1'y_1'y_2'$.}
  		\label{pic:case3}
	\end{subfigure}
	\hfill
	\begin{subfigure}[t]{.47\linewidth}
		\centering
    	\includegraphics{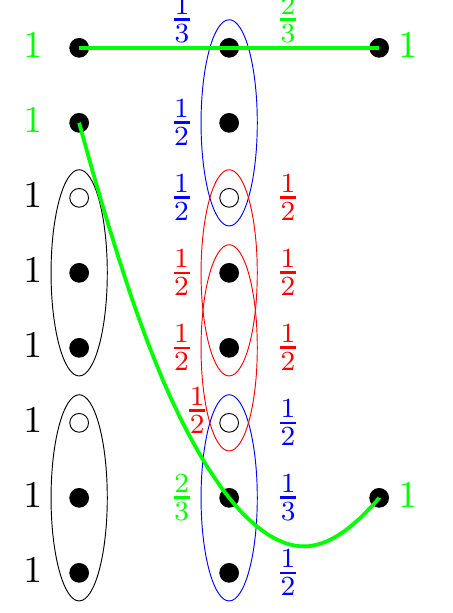}
  		\caption{(a1)-edges $w_1y_1y_2$ and $w_2z_1z_2$, 
			(b1)-edges $a_1x_1'u$ and $a_2z_1'v$,
			(b2)-edges $x_1'x_2'w_1'$ and $w_2'z_1'z_2'$, and
			(a2)-edges $w_1'y_1'y_2'$ and $y_1'y_2'w_2'$.}
  		\label{pic:case3a}
	\end{subfigure}
	\setcounter{subfigure}{0}
\end{figure}
\end{proof}

To complete the proof of  Lemma~\ref{lem:frachomextent} it is left to prove Claim~\ref{claim:enno2}.

\begin{proof}[Proof of Claim~\ref{claim:enno2}]
Before defining the injection $f\colon L_1\to F$ we collect some information about $\Forb(a,b)$ with $ab\in L_1$.
Owing to Claim~\ref{claim:enno1}, we may assume without loss of 
generality that $x_1$, $y_1$, $z_1$ and $x'_1$, $y'_1$, $z'_1$ are the vertices which span all edges of $L_1$. 
First we consider $e=y_1y_1'$.  
As shown in Figure~\ref{pic:case6} the appearance of 
$w_1y_2'\in L_1$ would give rise to a 
fractional $\hom(\cM)$-tiling~$h$ with $h_{\min}\geq 1/3$ and $w(h)=16.5$.
Consequently, we have
\[
w_1y_2'\in \Forb(y_1,y_1').
\]

For the case  $e=x_1x_1'\in L_1$, Figure~\ref{pic:case4},
shows that $x_2w'_1\in\Forb(x_1,x_1')$ and
by symmetry, it follows that 
\[
\lbrace x_2w_1',w_1x_2'\rbrace \subseteq \Forb(x_1,x_1')\,.
\]
By applying appropriate automorphisms to $\cM_1$ and $\cM_2$ we immediately obtain
information on $\Forb(x_1,z_1')$, $\Forb(z_1,x'_1)$, and $\Forb(z_1,z_1')$. Indeed, we have
\[
\lbrace x_2w_2',w_1z_2'\rbrace \subseteq \Forb(x_1,z_1')\,,\ 
\lbrace w_2x'_2,z_2w'_1\rbrace \subseteq \Forb(z_1,x_1')\,,\ 
\lbrace z_2w_2',w_2z_2'\rbrace \subseteq \Forb(z_1,z_1')\,.
\]

\begin{figure}[thp]\small
	\caption{$\Forb(y_1,y_1')$ and $\Forb(x_1,x_1')$}
	\begin{subfigure}[t]{.47\linewidth}
		\setcounter{figure}{4}
		\centering
		\includegraphics{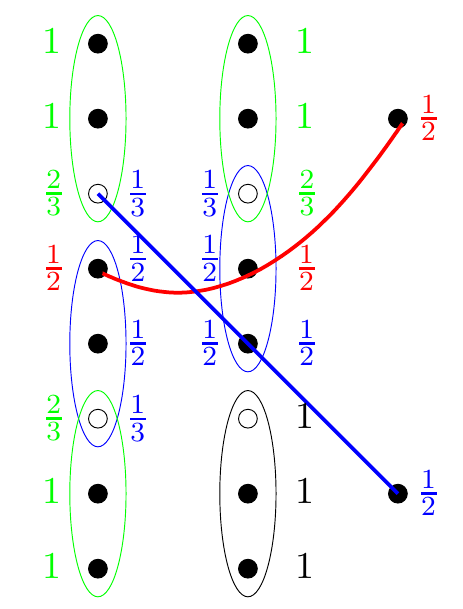}
		\caption{(a1)-edge $w'_2z'_1z'_2$, 
				(a2)-edge $y_1y_1'u$, 
				(b1)-edges $x_1x_2w_1$, $w_1z_1z_2$, and $x_1'x_2'w_1'$, and
				(b2)-edges $w_1y_2'v$, $y_1y_2w_2$, and $w_1'y_1'y_2'$.}
  		\label{pic:case6}
	\end{subfigure}
	\hfill
	\begin{subfigure}[t]{.47\linewidth}
		\centering
     	\includegraphics{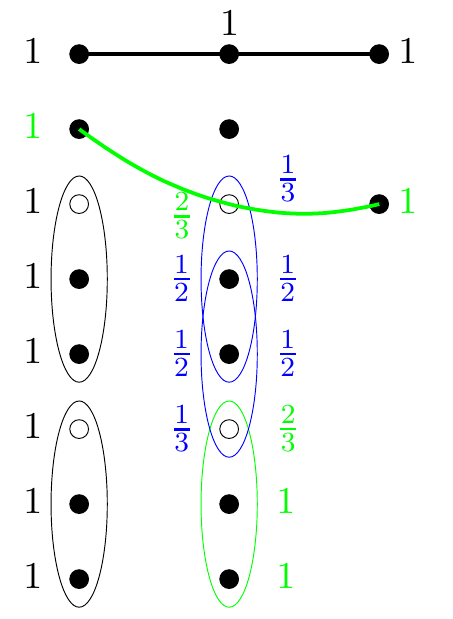}
  		\caption{(a1)-edges $x_1x'_1u$, $w_1y_1y_2$, and $w_2z_1z_2$, 
			(b1)-edges $x_2w_1'v$ and $w_2'z_1'z_2'$, and
			(b2)-edges $w_1'y_1'y_2'$ and $y_1'y_2'w_2'$.}
  		\label{pic:case4}
	\end{subfigure}
	\setcounter{subfigure}{0}
\end{figure}
Next we consider $e=y_1x_1'$. In this case Figure~\ref{pic:case4a} shows that
$y_2w_1' \in  \Forb(y_1,x_1')$. Moreover,
as shown in Figure~\ref{pic:case5} we also have
$w_1x_2'\in \Forb(y_1,x_1')$ and, consequently, we obtain
\[
	\lbrace y_2w_1', w_1x_2' \rbrace\subseteq \Forb(y_1,x_1')\,.
\]
Again applying appropriate automorphisms to $\cM_1$ and $\cM_2$ we immediately obtain
information on $\Forb(x_1,y_1')$, $\Forb(y_1,z_1')$, and $\Forb(z_1,y'_1)$. Indeed one can show
\[
\lbrace w_1y_2',x_2w_1'\rbrace \subseteq \Forb(x_1,y_1')\,,\ 
\lbrace y_2w_2',w_1z_2'\rbrace \subseteq \Forb(y_1,z_1')\,,\
\lbrace w_2y'_2,z_2w'_1\rbrace \subseteq \Forb(z_1,y_1')\,. 
\]

\begin{figure}[thp]\small
	\caption{$\Forb(y_1x_1')$}
	\begin{subfigure}[t]{.47\linewidth}
		\setcounter{figure}{5}
		\centering
    	\includegraphics{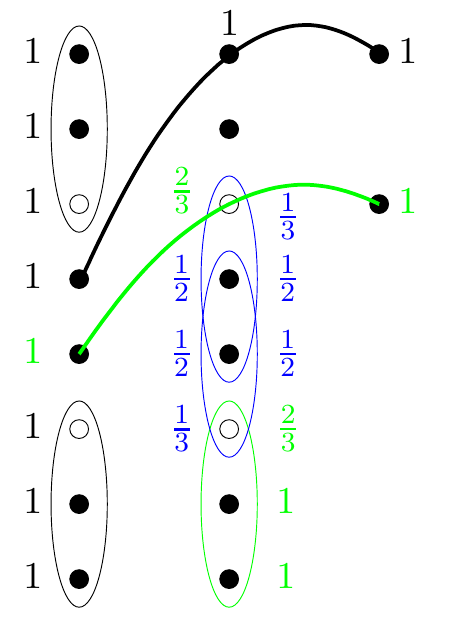}
  		\caption{(a1)-edges $y_1x_1u$, $x_1x_2w_1$, and $w_2z_1z_2$, 
			(b1)-edges $y_2w_1'v$ and $w_2'z_1'z_2'$, and
			(b2)-edges $w_1'y_1'y_2'$ and $y_1'y_2'w_2'$.}
  		\label{pic:case4a}
	\end{subfigure}
	\hfill
	\begin{subfigure}[t]{.47\linewidth}
		\centering
		\includegraphics{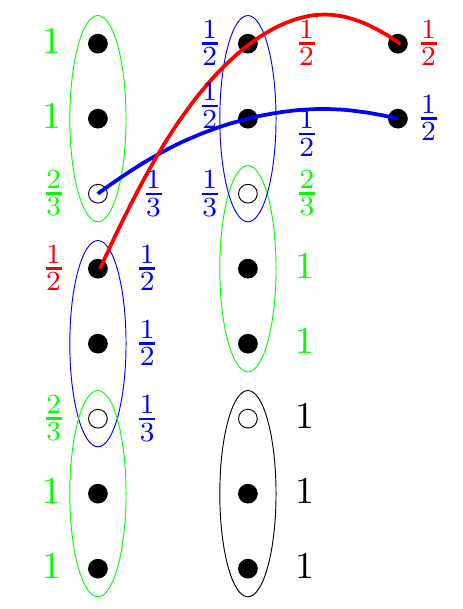}
  			\caption{(a1)-edge $w'_2z'_1z'_2$, 
					(a2)-edge $y_1x_1'u$, 
					(b1)-edges $x_1x_2w_1$, $w_2z_1z_2$, and $w_1'y_1'y_2'$, and
					(b2)-edges $w_1x_2'v$, $y_1y_2w_2$, and $x_1'x_2'w_1'$.}
  		\label{pic:case5}
	\end{subfigure}
	\setcounter{subfigure}{0}
\end{figure}

Finally, we define an injection $f\colon L_1\to F\supseteq L_2$ such that $f(a,b)\in\Forb(a,b)$ for every 
pair~$ab\in L_1$.
Recall that due to Claim~\ref{claim:enno1} we have 
$L_1\subseteq \lbrace x_1,y_1,z_1 \rbrace \times \lbrace x_1',y_1',z_1' \rbrace$
and it follows from the discussion above that we can fix $f$ as follows
\begin{align*}
f(x_1,x_1') &= w_1x_2'\,,
& f(x_1,y_1') &= x_2w_1'\,,
& f(x_1,z_1') &= x_2w_2'\,,\\
f(y_1,x_1') &= y_2w_1'\,,
& f(y_1,y_1') &= w_1y_2'\,,
& f(y_1,z_1') &= w_1z_2'\,,\\
f(z_1,x_1') &= w_2x_2'\,,
& f(z_1,y_1') &= w_2y_2'\,,
& f(z_1,z_1') &= z_2w_2'\,.
\end{align*}
Consequently, $|L_1|\leq |F|-|L_2|$ and Claim~\ref{claim:enno2} follows from 
$|L_2|+|L_1|\leq |F|\leq 28$.
\end{proof}

\subsection{Proof of the \texorpdfstring{$\cM$}{M}-tiling Lemma}
\label{sec:Mtiling}
In this section we prove Lemma~\ref{lem:Mtiling}.
Let $\cH$ be a $3$-uniform hypergraph on $n$ vertices. We say $\cH$ has a \emph{ $\beta$-deficient} $\cM$-tiling if there exists a family of pairwise disjoint copies of $\cM$ in $\cH$
leaving at most $\beta n$ vertices uncovered.
\begin{proposition} 
\label{prop:continuous}
For all $1/2>d>0$ and all $\beta, \delta>0$ the following holds.  
Suppose there exists an $n_0$ such that  every $3$-uniform hypergraph $\cH$ on $n>n_0$ vertices with minimum vertex degree $\delta_1(\cH)\geq d\binom{n}2$ has a 
$\beta$-deficient $\cM$-tiling. Then  every $3$-uniform hypergraph $\cH'$ on $n'>n_0$ vertices with $\delta_1(\cH')\geq (d-\delta)\binom{n'}2$
has a $(\beta+25\sqrt{\delta})$-deficient $\cM$-tiling.
\end{proposition}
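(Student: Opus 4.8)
The plan is to deduce this from the hypothesis by the standard padding trick: given $\cH'$ with minimum vertex degree only $(d-\delta)\binom{n'}{2}$, we would build a hypergraph $\cH$ on a few more vertices whose minimum vertex degree is back up to $d\binom{|V(\cH)|}{2}$, apply the assumed conclusion to $\cH$, and then simply discard the few copies of $\cM$ in the resulting tiling that meet the newly added vertices; what remains is an $\cM$-tiling of $\cH'$ whose deficiency we can control.

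First I would dispose of two trivial cases. If $\delta\geq 1/625$ then $25\sqrt\delta\geq 1$, so the empty tiling is $(\beta+25\sqrt\delta)$-deficient and there is nothing to prove; hence we may assume $\delta<1/625$. Likewise we may assume $\beta<1$. Finally, since enlarging $n_0$ only strengthens the hypothesis, we may assume $n_0$ is as large as convenient, in particular $n_0\geq 1/\delta$.

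Now take $\cH'$ on $n'>n_0$ vertices with $\delta_1(\cH')\geq(d-\delta)\binom{n'}{2}$, set $m=\lceil 2\delta n'\rceil$ and $n=n'+m$, and let $\cH$ be the hypergraph on $V(\cH')\dcup W$ with $|W|=m$ whose edges are all edges of $\cH'$ together with every triple meeting $W$; thus $\cH[V(\cH')]=\cH'$. Each $w\in W$ has $\deg_{\cH}(w)=\binom{n-1}{2}\geq d\binom{n}{2}$, while each $v\in V(\cH')$ has
\[
  \deg_{\cH}(v)=\deg_{\cH'}(v)+\binom{n-1}{2}-\binom{n'-1}{2}\geq(d-\delta)\binom{n'}{2}+\binom{n-1}{2}-\binom{n'-1}{2}.
\]
A short computation --- this is the one place where $d<1/2$, i.e.\ $2(1-d)>1$, is used --- shows the right-hand side is at least $d\binom{n}{2}$, so $\delta_1(\cH)\geq d\binom{n}{2}$. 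Applying the hypothesis to $\cH$ gives a family $\cF$ of pairwise disjoint copies of $\cM$ missing at most $\beta n$ vertices. Since the copies are disjoint, at most $m$ of them meet $W$; deleting these, the surviving copies lie inside $V(\cH')$ and, because $\cH[V(\cH')]=\cH'$, form an $\cM$-tiling of $\cH'$. A vertex of $V(\cH')$ left uncovered by this subfamily is either among the $\leq\beta n$ vertices missed by $\cF$ or among the $\leq 8m$ vertices on a deleted copy, so the number of uncovered vertices is at most $\beta n+8m=\beta n'+(\beta+8)m\leq\beta n'+9m\leq(\beta+25\sqrt\delta)n'$, using $m\leq 2\delta n'+1$, $n'\geq 1/\delta$ and $\delta<1/625$ in the last step. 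This is the desired $(\beta+25\sqrt\delta)$-deficient $\cM$-tiling of $\cH'$.

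The argument is essentially bookkeeping; the only substantive point is the inequality $\delta_1(\cH)\geq d\binom{n}{2}$, which amounts to choosing $|W|$ in the right range --- large enough to restore the degrees of the old vertices, yet small enough that discarding the $\leq m$ copies of $\cM$ touching $W$ costs only $O(\delta n')$ uncovered vertices. That both requirements can be met simultaneously is exactly where the assumption $d<1/2$ is needed.
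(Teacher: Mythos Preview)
Your proof is correct and follows the same padding approach as the paper: add a small set of ``universal'' vertices to restore the minimum degree, apply the hypothesis, then delete the copies of $\cM$ touching the new vertices. The only difference is quantitative---the paper pads with $3\sqrt{\delta}\,n'$ vertices rather than your $\lceil 2\delta n'\rceil$, so your version is in fact slightly more economical while still fitting comfortably under the stated $25\sqrt{\delta}$ bound.
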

\begin{proof}
Given a  $3$-uniform hypergraph $\cH'$ on $n'>n_0$ vertices with $\delta_1(\cH')\geq (d-\delta)\binom{n'}2$. By adding 
a set~$A$ of $3\sqrt{\delta}n'$ new vertices to $\cH'$ and adding all triplets to~$\cH'$ which intersect~$A$ we obtain a new hyperpgraph $\cH$ on $n=n'+|A|$
vertices which satisfies $\delta_1(\cH)\geq d\binom{n}2$. Consequently, $\cH$ has a $\beta$-deficient $\cM$-tiling and by removing the $\cM$-copies intersecting $A$, we obtain 
a $(\beta+25\sqrt{\delta})$-deficient $\cM$-tiling of $\cH'$.
\end{proof}

\begin{proof}[Proof of Lemma~\ref{lem:Mtiling}]
Let $\gamma>0$ be given and we assume that there is an $\alpha>0$ such that  for all $n_0'$ there is  a $3$-uniform hypergraph 
$\cH$ on $n>n_0'$ vertices which satisfies $\delta_1(\cH)\geq (\frac{7}{16}+\gamma)\binom{n}2$ but which does not contain an $\alpha$-deficient $\cM$-tiling.
Let $\alpha_0$ be the supremum  of all such $\alpha$ and note that $\alpha_0$ is bounded away from one due to Proposition~\ref{prop:oneM}.

We choose  $\eps=(\gamma\alpha_0/2^{100})^2$. By definition of  $\alpha_0$,
there is an $n_0$ such that all $3$-uniform hypergraphs $\cH$ on $n>n_0$ vertices 
satisfying $\delta_1(\cH)\geq (\frac{7}{16}+\gamma)n$ have an $(\alpha_0+\eps)$-deficient $\cM$-tiling. Hence, by 
Proposition~\ref{prop:continuous}
all $3$-uniform hypergraphs $\cH$ on $n>n_0$ vertices 
satisfying $\delta_1(\cH)\geq (\frac{7}{16}+\gamma-\eps)\binom{n}2$ have an $(\alpha_0+\eps+25\sqrt{\eps})$-deficient $\cM$-tiling. 
We will show that there exists an  $n_1$ (to be chosen) such that all $3$-uniform hypergraphs $\cH$ on $n>n_1$ vertices 
satisfying $\delta_1(\cH)\geq (\frac{7}{16}+\gamma)\binom{n}2$ have an $(\alpha_0-\eps)$-deficient $\cM$-tiling, contradicting the definition of $\alpha_0$.

We apply Proposition~\ref{prop:clustermindeg}  with the constants $\gamma$, 
$\eps/12$, $d=\eps/2$ and $t_{\ref{prop:clustermindeg}}=\max\{n_0,(\eps/12)^{-3}\}$ 
to obtain an $n_{\ref{prop:clustermindeg}}$ 
and $T_{\ref{prop:clustermindeg}}$.
Let  $n_1\geq\max\{n_{\ref{prop:clustermindeg}},n_0\}$ be sufficiently large and let $\cH$ be an arbitrary $3$-uniform hypergraph on $n>n_1$ 
vertices which satisfies   $\delta_1(\cH)\geq (\frac{7}{16}+\gamma)\binom{n}2$ but which does not contain
an $\alpha_0$-deficient $\cM$-tiling. We apply Proposition~\ref{prop:clustermindeg} to $\cH$ with the constants chosen above 
and obtain a cluster hypergraph $\cK=\cK(\eps/12,\eps/2,\cQ)$ on $t>t_{\ref{prop:clustermindeg}}$ vertices which satisfies
$\delta_1(\cK)\geq (\frac{7}{16}+\gamma-\eps)\binom{t}2$. 
Taking $\ccM$ to be the largest $\cM$-tiling in $\cK$ we know
by the definition of $\alpha_0$ and by
Proposition~\ref{prop:continuous} that $\ccM$ is
an $\alpha_1$-deficient $\cM$-tiling of $\cK$, for some $\alpha_1\leq\alpha_0+26\sqrt{\eps}$. 

We claim that $\ccM$ is not $(\alpha_0/2)$-deficient and for a contradiction, assume the contrary. 
Then, from Fact~\ref{fact:homM}, we know that for each $\cM_j\in\ccM$ 
there is a fractional $\hom(\cM)$-tiling~$h^j$ of $\cM_j$ with $h_{\min}^j\geq 1/3$ and weight $w(h^j)=8$.
Hence, the union of all these fractional $\hom(\cM)$-tiling gives rise to a fractional $\hom(\cM)$-tiling $h$ of $\cK$ with $h_{\min}\geq1/3$ and weight
\[w(h)\geq 8|\ccM|\geq t(1-\alpha_0/2).\]

By applying Proposition~\ref{prop:Mfrac2int} to the fractional $\hom(\cM)$-tiling $h$ (and 
recalling  that the vertex classes $V_1,\dots,V_t$ of the 
regular partition has the same size, which was at least $(1-\eps/12)n/t$)
 we obtain an $\cM$-tiling of $\cH$ which
covers at least 
\[\big(w(h)-3t\eps\big)\left(1-\frac{\eps}{12}\right)\frac{n}{t}
	\geq (1-\alpha_0+\eps)n \]
vertices of $\cH$.
This, however, yields a $(\alpha_0-\eps)$ deficient  $\cM$-tiling of $\cH$ contradicting the choice of $\cH$.
Hence, $\ccM$ is not $(\alpha_0/2)$-deficient from which we conclude that $X$, the set of vertices in $\cK$ not covered by~$\ccM$, has size 
\begin{equation}\label{eq:MX}
	|X|\geq \frac{\alpha_0 t}{2}\,.
\end{equation} 

For a pair $\cM_i\cM_j\in \binom{\ccM}2$  the edge $e\in \cK$ is $ij$-crossing if $|e\cap V(\cM_i)|=|e\cap V(\cM_j)|=1$. 
\begin{claim}
\label{claim:Cbig}
Let $\cC$ be the set of all triples $xij$ such that $x\in X$, $\cM_i\cM_j\in\binom{\ccM}2$ and 
there are at least $29$ $ij$-crossing edges containing $x$. Then we have
$|\cC|\geq \gamma \binom{t}2|X|/72$.
\end{claim}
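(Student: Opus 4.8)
The plan is to double count the set $\cC$ according to its first coordinate $x\in X$. For $x\in X$ and a pair $\cM_i\cM_j\in\binom{\ccM}{2}$ let $c_{ij}(x)$ be the number of $ij$-crossing edges through $x$; since such an edge has the form $\{x,a,b\}$ with $a\in V(\cM_i)$, $b\in V(\cM_j)$, we have $c_{ij}(x)\le 64$. Put $q=|\ccM|$ and $c(x)=\sum_{\cM_i\cM_j}c_{ij}(x)$, the number of edges through $x$ whose other two vertices lie in two distinct members of $\ccM$, and let $d(x)$ be the number of pairs with $c_{ij}(x)\ge 29$. The pairs with $c_{ij}(x)\le 28$ contribute at most $28\binom{q}{2}$ to $c(x)$ and the remaining $d(x)$ pairs contribute at most $64$ each, so $c(x)\le 36\,d(x)+28\binom{q}{2}$ and therefore
\[
 |\cC|=\sum_{x\in X}d(x)\ \ge\ \frac{1}{36}\Big(\sum_{x\in X}c(x)-28\binom{q}{2}|X|\Big).
\]
Hence it suffices to prove $\sum_{x\in X}c(x)\ge 28\binom{q}{2}|X|+\frac{\gamma}{2}\binom{t}{2}|X|$.

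To bound $\sum_{x\in X}c(x)$ from below, recall $\deg_{\cK}(x)\ge(\frac{7}{16}+\gamma-\eps)\binom{t}{2}$ for every $x$, and observe that an edge through $x$ is non-crossing exactly when its two other vertices lie inside a single $\cM_k\in\ccM$ (at most $28q$ of those through a fixed $x$) or when at least one of them lies in $X$. Writing $e_2(X)$, $e_3(X)$ for the numbers of edges of $\cK$ with exactly two, respectively three, vertices in $X$, summing the ``$\ge 1$ vertex in $X$'' incidences over $x\in X$ gives $2e_2(X)+3e_3(X)$, so
\[
 \sum_{x\in X}c(x)\ \ge\ |X|\Big(\frac{7}{16}+\gamma-\eps\Big)\binom{t}{2}-28q|X|-2e_2(X)-3e_3(X).
\]
Since $\ccM$ is a \emph{largest} $\cM$-tiling of $\cK$, the induced hypergraph $\cK[X]$ contains no copy of $\cM$; as $|X|\ge\alpha_0 t/2$ by \eqref{eq:MX} is large, Proposition~\ref{prop:oneM} forces $e_3(X)=e(\cK[X])=O(|X|^2)$, a lower order term.

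The heart of the matter — and the step I expect to be the real obstacle — is to bound $e_2(X)=\sum_{v\notin X}e(L_v[X])$, where $L_v[X]$ is the link graph of $v$ restricted to $X$. Because we only have a minimum \emph{vertex} degree condition (no codegree control), this cannot be done vertex by vertex; one must exploit the maximality of $\ccM$ once more. The point is that no $\cM_k\in\ccM$ can contain four vertices $v$ with $e(L_v[X])$ large (say bigger than $\frac12\binom{|X|}{2}$): if $v,v'$ are two such vertices then $e(L_v[X])+e(L_{v'}[X])>\binom{|X|}{2}$, so their links share an edge $cd$, and picking further disjoint edges $ab\in L_v[X]$, $ef\in L_{v'}[X]$ inside $X$ yields the copy $\{a,b,v\},\{c,d,v\},\{c,d,v'\},\{e,f,v'\}$ of $\cM$ using only $V(\cM_k)$ and a $6$-subset of $X$; four large vertices in $\cM_k$ then give two vertex-disjoint such copies, and replacing $\cM_k$ in $\ccM$ by these two copies contradicts maximality. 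A refinement of this argument — invoking Proposition~\ref{prop:oneM} inside $\cK[V(\cM_k)\cup X]$, and inside $\cK[V(\cM_k)\cup V(\cM_l)\cup X]$ when necessary — lets one lower the threshold and dispose of the borderline cases, and shows $e_2(X)\le\big(3+o(1)\big)q\binom{|X|}{2}$, the covered vertices of small link contributing a negligible amount.

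Finally, substituting $8q=t-|X|$ (each member of $\ccM$ has $8$ vertices) and expanding $28\binom{q}{2}$ and $\binom{t}{2}$, one checks that
\[
 |X|\Big(\frac{7}{16}+\gamma-\eps\Big)\binom{t}{2}-28q|X|-2e_2(X)-3e_3(X)\ \ge\ 28\binom{q}{2}|X|+\frac{\gamma}{2}\binom{t}{2}|X|,
\]
using $e_2(X)\le(3+o(1))q\binom{|X|}{2}$, $e_3(X)=O(|X|^2)$, and the facts that $\eps=(\gamma\alpha_0/2^{100})^2$ is tiny and $t$ is large, so that all error terms $O(t|X|)$, $O(|X|^2)$ and $\eps\binom{t}{2}$ are dominated by the $\Theta(t^2)$-sized main terms; here the inequality $|X|\ge\alpha_0 t/2$ is what guarantees that the main inequality survives for every $\gamma>0$. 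This gives $|\cC|\ge\gamma\binom{t}{2}|X|/72$, and essentially all of the content lies in the estimate for $e_2(X)$ via the maximality of $\ccM$.
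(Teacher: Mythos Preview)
Your overall architecture is exactly the paper's: decompose $\sum_{x\in X}\deg_{\cK}(x)$ into (i) edges entirely in $X$, (ii) edges with two vertices in $X$, (iii) edges with one vertex in $X$ and both others in a single $\cM_k$, and (iv) crossing edges; bound (i)--(iii) and solve for $|\cC|$ via the inequality $c(x)\le 36\,d(x)+28\binom{q}{2}$. The paper proves $|\cA|\le\tfrac{7}{16}\binom{|X|}{3}$ and $|\cB|\le\tfrac{7}{2}\binom{|X|}{2}\,|\ccM|$, corresponding to your $e_3(X)$ and $e_2(X)$, and the final arithmetic uses the identity $\binom{t}{2}=\binom{|X|}{2}+8|X|q+\binom{8q}{2}$ just as you do.

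The gap is in your bound on $e_2(X)$. Your explicit argument (per-vertex threshold $\tfrac12\binom{|X|}{2}$) only shows that no $\cM_k$ contains four vertices with link above $\tfrac12\binom{|X|}{2}$, giving $e_2(X)\le(3\cdot 1+5\cdot\tfrac12)\,q\binom{|X|}{2}=\tfrac{11}{2}\,q\binom{|X|}{2}$. Substituting $2e_2(X)\approx\tfrac{11}{2}\,q|X|^2$ into your final inequality and comparing with the available $\tfrac{7}{2}|X|^2q$ (the cross term in $\tfrac{7}{16}\binom{t}{2}$ after subtracting $28\binom{q}{2}$) leaves a deficit of order $2q|X|^2$; neither the $\tfrac{7}{16}|X|\binom{|X|}{2}$ term nor the $\tfrac{\gamma}{2}|X|\binom{t}{2}$ term can absorb this when $\gamma$ is small and $|X|\ll q$. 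Furthermore, your construction requires a common link edge for each \emph{pair} $\{v,v'\}$, i.e.\ $|L_v|+|L_{v'}|>\binom{|X|}{2}$, so the per-vertex threshold cannot be pushed below $\tfrac12$ by this route; the ``refinement to $3+o(1)$'' you allude to does not follow from lowering a threshold, and invoking Proposition~\ref{prop:oneM} inside $\cK[V(\cM_k)\cup X]$ only yields a single tight path, not the two disjoint copies of $\cM$ you need.

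The paper's fix is to argue by \emph{total} link: if $\sum_{v\in V(\cM_k)}e(L_v[X])>\tfrac{7}{2}\binom{|X|}{2}$, discard the vertices with link below $13|X|$, observe that at least four survive with combined link still exceeding $3\binom{|X|}{2}$, and apply pigeonhole to find a pair $\{x_1,x_2\}\subset X$ lying in at least four of the surviving links; repeat for a disjoint pair $\{x_3,x_4\}$. Together with one further link edge through each of the four chosen $v_i$ this produces two vertex-disjoint copies of $\cM$ exactly as in your construction, contradicting maximality of $\ccM$. This gives $e_2(X)\le\tfrac{7}{2}\,q\binom{|X|}{2}$, which is precisely the constant your final inequality needs; your stronger target $3+o(1)$ is neither required nor substantiated.
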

\begin{proof}
Let $\cA$ be the set of those hyperedges in $\cK$ which are  completely contained in $X$ and  let $\cB$ be the set of all the edges with exactly two vertices in $X$.
Then it is sufficient to show that 
\begin{equation}
\label{eq:ABsmall}
|\cA|\leq \frac7{16}\binom{|X|}3\quad\text{and}\quad |\cB|\leq \frac7{2}\binom{|X|}2|\ccM|.
\end{equation} 
Indeed, assuming \eqref{eq:ABsmall} and $|\cC|\leq \gamma \binom{t}2|X|/72$ we
obtain the following contradiction
\begin{align*}
\sum_{x\in X}\deg(x)&\leq 3|\cA|+2|\cB|+28\left(|X|\binom{|\ccM|}2-|\cC|\right)+64|\cC|+\binom82|\ccM||X|\\
&\leq |X|\left[\frac7{16}\binom{|X|}2+\frac7{2}|X||\ccM|
		+28\binom{|\ccM|}2+\frac{36}{72}\gamma \binom{t}2 +\binom82|\ccM|\right]\\
&\leq|X| \left[\left(\frac{7}{16}+\frac{\gamma}2\right)\binom{t}2 +\binom82|\ccM|\right]\\
&<|X|\cdot\delta_1(\cK)
\end{align*}
where in the third inequality we used $\binom{t}2=\binom{|X|}2+8|X||\ccM|+\binom{8|\ccM|}2$.

Note that the first part of \eqref{eq:ABsmall} trivially holds  since in the opposite case, using the
first part of Proposition~\ref{prop:oneM} 
we obtain a tight path in $X$ of length at least eight. However, this path contains a copy of $\cM$ as a subhypergraph which yields a contradiction to the maximality of $\ccM$.

To complete the proof let us assume  $|\cB|> \frac72\binom{|X|}2|\ccM|$ 
from which
we deduce that there is an $\cM'\in\ccM$ such that $V(\cM')$ intersects at least $\frac72\binom{|X|}2$ edges from $\cB$.
From $V(\cM')$ we remove the vertices which are contained in less than $13|X|$ edges from $\cB$. Note that there are at least four vertices, say 
$v_1,\dots,v_4$, and at least $(3+\eps)\binom{|X|}{2}$ edges from $\cB$ left which intersect these vertices. 
Hence, there exists a pair $x_1, x_2$ such that $\{x_1,x_2,v_i\}\in \cH$ for all $i=1,\dots,4$. Removing all edges intersecting $x_1,x_2$ we still have at least
$(3+\eps/2)\binom{|X|}{2}$ edges intersecting $v_1,\dots,v_4$ and we can find another pair $x_3, x_4$ disjoint from $x_1, x_2$ with $\{x_3,x_4,v_i\}\in\cH$ for $i=1,\dots,4$.
For each $v_i$ we can find another edge from $\cB$ containing~$v_i$, 
keeping them all mutually disjoint and also disjoint from $\{x_1,x_2\}$ and $\{x_3,x_4\}$. This is possible 
since each $v_i$ is contained in more than $13|X|$ edges from $\cB$. This, however, yields two copies of $\cM$ which contradicts the fact that $\ccM$ was a largest possible
$\cM$-tiling.
\end{proof}
The set $X$ will be used to show that there is an $\cL\in\ccL_{29}$ such that $\cK$ contains many copies of $\cL$.
\begin{claim}
\label{claim:Lcopies}
There is an element $\cL\in\ccL_{29}$ and a family $\ccL$ of vertex disjoint copies of $\cL$ in the cluster hypergraph 
$\cK(\eps/12,\eps/2,\cQ)$ such that $|\ccL|\geq\gamma \alpha_0 t/2^{75}$.
\end{claim}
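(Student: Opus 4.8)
The plan is to exploit the lower bound $|\cC| \ge \gamma\binom{t}{2}|X|/72$ supplied by Claim~\ref{claim:Cbig}. First I would use a pigeonhole over all possible ``crossing patterns'' to isolate one fixed hypergraph $\cL \in \dL_{29}$ which occurs very often, and then pull out a linear-sized family of pairwise vertex-disjoint copies of it by a greedy argument.

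For the first part, fix once and for all a labelling of $V(\cM)$, so that each copy $\cM_i$ in the tiling $\ccM$ carries an identification $V(\cM_i) \cong \{1,\dots,8\}$. For a triple $xij \in \cC$ the $ij$-crossing edges of $\cK$ through $x$ form a bipartite graph between $V(\cM_i)$ and $V(\cM_j)$, which under the fixed labellings becomes a subset $G(x,i,j) \subseteq \{1,\dots,8\}^2$ with $|G(x,i,j)| \ge 29$. There are at most $2^{64}$ such subsets, so by averaging there is a fixed $G^* \subseteq \{1,\dots,8\}^2$ with $|G^*| \ge 29$ and a subfamily $\cC' \subseteq \cC$ with
\[
 |\cC'| \;\ge\; 2^{-64}|\cC| \;\ge\; \frac{\gamma}{72\cdot 2^{64}}\binom{t}{2}|X|
\]
such that $G(x,i,j) = G^*$ for every $xij \in \cC'$. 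Let $\cL = \cL(G^*)$ be the hypergraph obtained from two vertex-disjoint labelled copies $\cM_1, \cM_2$ of $\cM$ by adjoining two vertices $u, v$ and the edges $uab$ and $vab$ for all $(a,b) \in G^*$. Then $\deg_{\cL}(u) = \deg_{\cL}(v) = |G^*| \ge 29$, every edge incident to $u$ or $v$ meets $V(\cM_1)$ and $V(\cM_2)$ in exactly one vertex each, and $uab \in E(\cL)$ if and only if $vab \in E(\cL)$, so $\cL \in \dL_{29}$. The point is that whenever $xij, x'ij \in \cC'$ with $x \ne x'$, the subhypergraph of $\cK$ on the $18$ vertices $V(\cM_i) \cup V(\cM_j) \cup \{x, x'\}$ given by the eight edges of $\cM_i$ and $\cM_j$ together with the $2|G^*|$ edges $\{\,xab, x'ab \colon (a,b) \in G^*\,\}$ --- all present in $\cK$ --- is a copy of $\cL$; here $x, x' \in X$ are automatically disjoint from $V(\cM_i) \cup V(\cM_j)$.

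For the second part, write $\ccM = \{\cM_1, \dots, \cM_s\}$, so $s \le t/8$, and for $\cM_i\cM_j \in \binom{\ccM}{2}$ put $X'_{ij} = \{x \colon xij \in \cC'\}$. I would build $\ccL$ greedily, maintaining a set $U \subseteq [s]$ of used indices and a set $W \subseteq X$ of used vertices: as long as there is a pair $ij$ with $i,j \notin U$ and $|X'_{ij} \setminus W| \ge 2$, pick two vertices $x,x' \in X'_{ij} \setminus W$, add the associated copy of $\cL$ to $\ccL$, and move $i,j$ into $U$ and $x,x'$ into $W$. Distinct copies produced this way have disjoint index pairs and disjoint pairs of $X$-vertices, hence are vertex-disjoint. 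When the procedure stops after $k$ steps we have $|U| = |W| = 2k$ and $|X'_{ij}\setminus W| \le 1$ for every pair $ij$ with $i,j\notin U$; splitting the triples of $\cC'$ according to whether $\{i,j\}\cap U \ne \emptyset$, or $\{i,j\}\cap U=\emptyset$ with $x\in W$, or $\{i,j\}\cap U=\emptyset$ with $x\notin W$, we obtain
\[
 |\cC'| \;\le\; |U|\cdot s\cdot|X| \;+\; |W|\cdot\binom{s}{2} \;+\; \binom{s}{2} \;\le\; \frac{kt^2}{4} + (2k+1)\frac{t^2}{128}\,,
\]
using $s \le t/8$ and $|X| \le t$. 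On the other hand $|X| \ge \alpha_0 t/2$ by~\eqref{eq:MX}, so the bound from the first part gives $|\cC'| \ge \gamma\alpha_0 t^3/2^{74}$ once $t$ is large (in particular $k \ge 1$), and comparing the two estimates yields $|\ccL| = k \ge \gamma\alpha_0 t/2^{75}$.

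I expect the only genuinely non-routine point to be the first part: recognising that two vertices of $X$ which see the \emph{same} large crossing pattern on the same pair of tiles of $\ccM$ span precisely an element of $\dL_{29}$ --- the configuration that Lemma~\ref{lem:frachomextent} rules out --- so that pigeonholing over the at most $2^{64}$ crossing patterns pins down a single usable $\cL$. The rest is bookkeeping: checking that the greedy loop loses only a bounded multiplicative factor, that the extracted copies of $\cL$ are globally vertex-disjoint, and that the (deliberately wasteful) powers of $2$ balance; none of this should present difficulty.
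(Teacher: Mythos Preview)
Your proof is correct. The first half --- pigeonholing over the at most $2^{64}$ crossing patterns to fix a single $\cL\in\dL_{29}$ and a large monochromatic $\cC'\subseteq\cC$ --- is exactly what the paper does. The two arguments diverge in the extraction step. The paper views the monochromatic $\cC'$ as a $3$-uniform hypergraph on $X\cup\ccM$ and invokes Proposition~\ref{prop:oneM} to obtain a tight path of length $\geq \alpha_0\gamma t/2^{72}$; since every edge of $\cC'$ has one vertex in $X$ and two in $\ccM$, the path has the periodic pattern $X\,M\,M\,X\,M\,M\,\dots$, and one reads off $\geq \alpha_0\gamma t/2^{75}$ pairwise disjoint $4$-vertex segments $x\,i\,j\,x'$, each of which is a copy of~$\cL$. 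You instead run a greedy selection directly and bound its output by the three-way split of $\cC'$; the arithmetic checks, and the resulting $k\geq \gamma\alpha_0 t/2^{75}$ matches the paper's bound. Your route is more self-contained (no appeal to Proposition~\ref{prop:oneM}) at the price of the explicit bookkeeping; the paper's route is terser but relies on recognising the forced $X\,M\,M$ periodicity of tight paths in $\cC'$.
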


\begin{proof}
We consider the $3$-uniform hypergraph $\cC$ (as given from Claim~\ref{claim:Cbig}) on the vertex set $X\cup\ccM$.
Note that for fixed $ij$ a vertex $x$ is contained in at most $64$ $ij$-crossing edges, thus there are at most $2^{64}$ different hypergraphs  
with the property that $x$ is contained in at least 29 edges which are $ij$-crossing. 
We colour each edge $xij$ by one of the $2^{64}$ colours, 
depending on the $3$-partite hypergraph induced on $x$, $\cM_i, and \cM_j$. 
On the one hand, we observe that a monochromatic tight path consisting of the two edges $xij, x'ij\in \cC$  corresponds to a copy of~$\cL$. 
On the other hand, Claim~\ref{claim:Cbig}
implies that there is a colour such that  at least  
\[
	\frac{|\cC|}{2^{64}}
	\geq 
	\frac{\gamma\binom{t}{2}|X|}{72\cdot 2^{64} }
	\overset{\eqref{eq:MX}}{\geq} 
	\frac{\alpha_0\gamma t^3}{2^{73}}
\] edges in $\cC$ are coloured by it.
Hence, by Proposition~\ref{prop:oneM} there is a tight path with $\alpha_0\gamma t/2^{72}$ vertices 
using edges of this colour only.
Note that in this tight path every three consecutive vertices contain one vertex from $X$ and the other two vertices are from $\ccM$.
Thus, this path gives rise to at least $\alpha_0\gamma t/2^{75}$ pairwise vertex disjoint tight paths on four vertices such that the ends are vertices from $X$.
\end{proof}

For any $\cL_i\in\ccL$ we know from Lemma~\ref{lem:frachomextent}  that there is a fractional $\hom(\cM)$-tiling $h^i$ of $\cL_i$
with $h^i_{\min}\geq 1/3$ and  weight $w(h^i)\geq16+1/3$. Furthermore, for every $\cM_j\in \ccM$ which is not contained in any $\cL_i\in\ccL$ 
we know from Fact~\ref{fact:homM} that there 
is a fractional $\hom(\cM)$-tiling of $h^j$ of $\cM_j$ with $h_{\min}^j\geq 1/3$ and weight $w(h^j)=8$.
Hence, the union of all these fractional $\hom(\cM)$-tiling gives rise to a fractional $\hom(\cM)$-tiling $h$ of $\cK$ with $h_{\min}\geq1/3$ and weight
\[w(h)\geq \left(16+\frac13\right)|\ccL|+8(|\ccM|-2|\ccL|)=8|\ccM|+\frac{|\ccL|}3.\]

By applying Proposition~\ref{prop:Mfrac2int} to the fractional $\hom(\cM)$-tiling $h$  (and recalling  that the vertex classes $V_1,\dots,V_t$ of the 
regular partition has the same size which was at least $(1-\eps/12)n/t$) we obtain an $\cM$-tiling of $\cH$ which
covers at least 
\[\big(w(h)-3t\eps\big)\left(1-\frac{\eps}{12}\right)\frac{n}{t}
	\geq \left(8|\ccM|+\frac{|\ccL|}{3}-3t\eps\right)\left(1-\frac{\eps}{12}\right)\frac{n}{t} \]
vertices of $\cH$. 

Since $\ccM$ was an $(\alpha_0+26\sqrt{\eps})$-deficient $\cM$-tiling of $\cK$, the tiling we obtained above is an
$(\alpha_0-\eps)$-deficient $\cM$-tiling of $\cH$ due to the choice of $\eps$. This, however, is a contradiction to the fact that 
$\cH$ does not permit an $(\alpha_0-\eps)$-deficient $\cM$-tiling.
\end{proof}

\subsection{Proof of the path-tiling lemma}
\label{sec:pathtilingproof}
In this section we prove Lemma~\ref{lem:pathtiling}. The proof will use the following proposition which has been proven in~\cite{loosecyc} (see Lemma~20) 
in an even more general form, hence we omit the proof here.
\begin{proposition}
\label{prop:pathinreg}
  For all $d$ and $\beta>0$ there exist $\eps>0$, 
  integers $p$ and $m_0$ such that for all $m>m_0$ the following holds.
  Suppose $\cV=(V_1, V_2, V_{3})$ is an $(\eps,d)$-regular triple with
  $|V_i|=3m$ for $i=1,2$ and $|V_3|=2m.$
  Then there there is a loose path tiling of $\cV$ which consists of at most $p$ pairwise vertex disjoint paths
  and which covers all but at most $\beta m$ vertices of $\cV$.
\end{proposition}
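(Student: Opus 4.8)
The plan is to build the loose paths greedily inside $\cV$, steering the construction so that the classes $V_1$, $V_2$, $V_3$ are exhausted at roughly the same time. The starting point is a ``consumption calculus'' for loose paths in a $3$-partite host: since every edge of $\cV$ is a transversal, so is every edge of any loose path $\cP\subseteq\cV$, and if $\cP$ has $k$ edges (hence $2k+1$ vertices) then counting incidences shows that the number of vertices of $\cP$ lying in $V_j$ equals $k-s_j$, where $s_j$ counts those vertices which lie in $V_j$ and are shared by two consecutive edges of $\cP$; here $s_1+s_2+s_3=k-1$ and, since a transversal edge has exactly one vertex in each class, no two consecutive shared vertices lie in the same class, so each $s_j\le\lceil(k-1)/2\rceil$. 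Prescribing the classes of the successive shared vertices of $\cP$ to follow the periodic pattern $3,1,3,2,3,1,3,2,\dots$ forces $s_3=\lceil(k-1)/2\rceil$ and $s_1,s_2$ each about $k/4$, so such a path uses $V_1,V_2,V_3$ in the proportion $3:3:2=|V_1|:|V_2|:|V_3|$. Hence a single path of length about $4m$, embedded according to this schedule, would already cover almost all of $\cV$.

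To carry this out I would use greedy extension together with the regularity of $\cV$. Start from any transversal edge of $\cV$ (one exists because $d>0$). At a typical step the path ends at an edge with a designated active end $v$ in some class $V_P$; if the schedule asks the next shared vertex to lie in $V_R$, then the next edge must have the form $\{v,q,r\}$ with $q$ an unused vertex of the third class $V_Q$ and $r$ an unused vertex of $V_R$, and we want $r$ to again be ``typical''. Call a vertex of $V_P$ \emph{typical} (with respect to the current leftover sets $V_Q'\subseteq V_Q$, $V_R'\subseteq V_R$) if its link contains at least $(d-2\eps)|V_Q'||V_R'|$ pairs from $V_Q'\times V_R'$; by $(\eps,d)$-regularity of $\cV$, all but fewer than $\eps|V_P|$ vertices of $V_P$ are typical, provided $|V_Q'|\ge\eps|V_Q|$ and $|V_R'|\ge\eps|V_R|$. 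Consequently, if $v$ is typical there are at least $(d-2\eps)|V_Q'||V_R'|$ candidate edges through $v$, of which fewer than $\eps|V_R|\,|V_Q'|$ have an atypical $r$, so a valid extension keeping the active end typical exists as long as the leftover classes still have size at least, say, $\tfrac{3\eps}{d}m$. Running this until some leftover class drops to that size, the schedule guarantees that by then all three leftover classes have shrunk to $O(\tfrac{\eps}{d}m)$, so the path misses only $O(\tfrac{\eps}{d}m)$ vertices in total; allowing ourselves to break off and restart a bounded number of times absorbs the rounding and the occasional local schedule correction forced by the typicality restriction. Choosing $\eps=\eps(d,\beta)$ small enough that $O(\tfrac{\eps}{d}m)\le\beta m$, and then $p=p(d,\beta)$ and $m_0$ appropriately, gives the asserted tiling.

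The step I expect to be the main obstacle is exactly this greedy extension, because two requirements must be maintained in parallel and shown to remain compatible: the active end has to be chosen among the typical vertices of the incoming class at every step, so that the process never stalls before the leftover sets are genuinely small, while the pattern of classes of the shared vertices must be adjusted on the fly so that no class is drained prematurely. Verifying that these two constraints stay jointly satisfiable all the way down to leftover sets of size $\Theta(\eps m)$ — which is where the quantitative form of $(\eps,d)$-regularity is genuinely needed — is the technical heart of the argument; everything else is bookkeeping and a choice of constants.
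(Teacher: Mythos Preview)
The paper does not give its own proof of this proposition; it simply states that the result ``has been proven in~\cite{loosecyc} (see Lemma~20) in an even more general form'' and omits the argument. So there is no proof in the paper to compare against.

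That said, your proposal is the standard and correct way to establish such a statement. The consumption calculus is right: in a $3$-partite host each edge of a loose path is transversal, the $k-1$ shared vertices alternate between classes, and prescribing the shared-vertex pattern $3,1,3,2,\dots$ does force consumption in ratio $3:3:2$. The greedy extension using $(\eps,d)$-regularity to maintain a typical active end is also the expected mechanism, and your identification of the genuine technicality --- keeping typicality and the schedule simultaneously satisfiable down to leftover sets of size $\Theta(\eps m)$ --- is accurate. One small point worth tightening: the notion of ``typical'' should be defined with a little slack (say threshold $(d-\eps)$ rather than $(d-2\eps)$) relative to \emph{fixed} reference sets that are only updated every $\Theta(\eps m)$ steps, so that removing $O(1)$ vertices per step does not force you to recertify typicality at each step; this is the usual ``phases'' device, and it also makes the ``restart a bounded number of times'' remark precise and gives the bound $p=p(d,\beta)$. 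With that refinement the argument goes through without difficulty.
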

With this result at hand one can easily derive the path-tiling lemma (Lemma~\ref{lem:pathtiling}) from the $\cM$-tiling lemma (Lemma~\ref{lem:Mtiling}).

\begin{proof}[Proof of Lemma~\ref{lem:pathtiling}]
Given $\gamma>0$ and $\alpha>0$ we first apply Proposition~\ref{prop:pathinreg} with $d=\gamma/3$ and $\beta=\alpha/4$ to obtain
$\eps'>0$, $p'$, and $m_0$. Next, we apply
Lemma~\ref{lem:Mtiling} with $\gamma/2$ and $\alpha/2$ to obtain 
 $n_{\ref{lem:Mtiling}}$. 
Then we apply Proposition~\ref{prop:clustermindeg} with $\gamma$, $d$ and $\eps=\frac13\min\{d/2,\eps',\alpha/8\}$ from above and $t_0=n_{\ref{lem:Mtiling}}$  to obtain $T_0$ and
$n_{\ref{prop:clustermindeg}}$. Lastly we set  $n_0=\max\{n_{\ref{prop:clustermindeg}}, 2T_0m_0\}$ and $p=p'T_0$.

Given a $3$-uniform hypergraph $\cH$ on $n>n_0$ vertices which satisfies $\delta_1(\cH)\geq \left(\frac7{16}+\gamma\right)\binom{n}2$. By 
applying Proposition~\ref{prop:clustermindeg} with the constants chosen above we obtain an $(\eps,t)$-regular partition
$\cQ$. Furthermore, we know that the corresponding cluster hypergraph $\cK=\cK(\eps,d,\cQ)$ satisfies $\delta_1(\cK)\geq (7/16+\gamma/2)\binom{t}2$.
Hence, by Lemma~\ref{lem:Mtiling} we know that there is an $\cM$-tiling $\ccM$ of $\cK$ which covers all but at most $\alpha t/2$ vertices of $\cK$.
Note that the corresponding vertex classes in $\cH$ contain all but  at most $\alpha n/2+|V_0|$ vertices.

We want to apply Proposition~\ref{prop:pathinreg} to each copy $\cM'\in \ccM$ of $\cM$. To this end, let $\{1,\dots,8\}$ denote the vertex set of such an copy 
$\cM'$ and let $123,345,456,678$ denote the 
edges of $\cM'$. Further, for each $a\in V(\cM')$ let $V_a$ denote the corresponding partition class
in $\cH$.  We split $V_i$, $i=3,4,5,6$, into two disjoint sets $V_i^1$ and $V_i^2$ of sizes $|V_i^1|=2|V_i|/3$ and $|V_i^2|=|V_i|/3$ for $i=3,6$ and
$|V_i^1|=|V_i^2|=|V_i|/2$ for $i=4,5$. Then the tuples $(V_1,V_2,V_3^1)$, $(V_8,V_7,V_6^1)$ and $(V_3^2,V_4^1,V_5^1)$, $(V_4^2,V_5^2,V_6^2)$ all
satisfy the condition of Proposition~\ref{prop:pathinreg}, hence, there is a path tiling of these tuples consisting of at most $4p'$ paths
which covers all but at most~$12\beta n/t$ vertices of $V_1,\dots,V_8$.

Since $\ccM$ contains at most $t/8$ elements we obtain a path tiling which consists of at most~$4p't/8\leq p'T_0/2\leq p$
paths which covers all but at most $12\beta n/t\times t/8$ vertices. Consequently, the total number of vertices in $\cH$ not covered by the path tiling is at most~$3\beta n/2+\alpha n/2+|V_0|\leq \alpha n$. This completes the proof of Lemma~\ref{lem:pathtiling}.
\end{proof}

\begin{bibdiv}
\begin{biblist}

\bib{Ch91}{article}{
   author={Chung, Fan R. K.},
   title={Regularity lemmas for hypergraphs and quasi-randomness},
   journal={Random Structures Algorithms},
   volume={2},
   date={1991},
   number={2},
   pages={241--252},
   issn={1042-9832},
   review={\MR{1099803}},
   doi={10.1002/rsa.3240020208},
}

\bib{FR92}{article}{
   author={Frankl, P.},
   author={R{\"o}dl, V.},
   title={The uniformity lemma for hypergraphs},
   journal={Graphs Combin.},
   volume={8},
   date={1992},
   number={4},
   pages={309--312},
   issn={0911-0119},
   review={\MR{1204114}},
   doi={10.1007/BF02351586},
}

\bib{matchings}{article}{
   author={H{\`a}n, Hi{\d{\^e}}p},
   author={Person, Yury},
   author={Schacht, Mathias},
   title={On perfect matchings in uniform hypergraphs with large minimum
   vertex degree},
   journal={SIAM J. Discrete Math.},
   volume={23},
   date={2009},
   number={2},
   pages={732--748},
   issn={0895-4801},
   review={\MR{2496914}},
   doi={10.1137/080729657},
}

\bib{loosecyc}{article}{
   author={H{\`a}n, Hi{\d{\^e}}p},
   author={Schacht, Mathias},
   title={Dirac-type results for loose Hamilton cycles in uniform
   hypergraphs},
   journal={J. Combin. Theory Ser. B},
   volume={100},
   date={2010},
   number={3},
   pages={332--346},
   issn={0095-8956},
   review={\MR{2595675}},
   doi={10.1016/j.jctb.2009.10.002},
}

\bib{JLR_randomgraphs}{book}{
   author={Janson, Svante},
   author={{\L}uczak, Tomasz},
   author={Ruci\'nski, Andrzej},
   title={Random graphs},
   series={Wiley-Interscience Series in Discrete Mathematics and
   Optimization},
   publisher={Wiley-Interscience, New York},
   date={2000},
   pages={xii+333},
   isbn={0-471-17541-2},
   review={\MR{1782847}},
   doi={10.1002/9781118032718},
}

\bib{KK}{article}{
   author={Katona, Gy. Y.},
   author={Kierstead, H. A.},
   title={Hamiltonian chains in hypergraphs},
   journal={J. Graph Theory},
   volume={30},
   date={1999},
   number={3},
   pages={205--212},
   issn={0364-9024},
   review={\MR{1671170}},
   doi={10.1002/(SICI)1097-0118(199903)30:3<205::AID-JGT5>3.3.CO;2-F},
}

\bib{KKMO}{article}{
   author={Keevash, Peter},
   author={K{\"u}hn, Daniela},
   author={Mycroft, Richard},
   author={Osthus, Deryk},
   title={Loose Hamilton cycles in hypergraphs},
   journal={Discrete Math.},
   volume={311},
   date={2011},
   number={7},
   pages={544--559},
   issn={0012-365X},
   review={\MR{2765622}},
   doi={10.1016/j.disc.2010.11.013},
}

\bib{reglemsurvey}{article}{
   author={Koml{\'o}s, J.},
   author={Simonovits, M.},
   title={Szemer\'edi's regularity lemma and its applications in graph
   theory},
   conference={
      title={Combinatorics, Paul Erd\H os is eighty, Vol.\ 2 (Keszthely,
      1993)},
   },
   book={
      series={Bolyai Soc. Math. Stud.},
      volume={2},
      publisher={J\'anos Bolyai Math. Soc., Budapest},
   },
   date={1996},
   pages={295--352},
   review={\MR{1395865}},
}

\bib{KMO}{article}{
   author={K{\"u}hn, Daniela},
   author={Mycroft, Richard},
   author={Osthus, Deryk},
   title={Hamilton $\ell$-cycles in uniform hypergraphs},
   journal={J. Combin. Theory Ser. A},
   volume={117},
   date={2010},
   number={7},
   pages={910--927},
   issn={0097-3165},
   review={\MR{2652102}},
   doi={10.1016/j.jcta.2010.02.010},
}

\bib{KO3}{article}{
   author={K{\"u}hn, Daniela},
   author={Osthus, Deryk},
   title={Loose Hamilton cycles in 3-uniform hypergraphs of high minimum
   degree},
   journal={J. Combin. Theory Ser. B},
   volume={96},
   date={2006},
   number={6},
   pages={767--821},
   issn={0095-8956},
   review={\MR{2274077}},
   doi={10.1016/j.jctb.2006.02.004},
}

\bib{RRsurvey}{article}{
   author={R{\"o}dl, Vojt{\v{e}}ch},
   author={Ruci{\'n}ski, Andrzej},
   title={Dirac-type questions for hypergraphs---a survey (or more problems
   for Endre to solve)},
   conference={
      title={An irregular mind},
   },
   book={
      series={Bolyai Soc. Math. Stud.},
      volume={21},
      publisher={J\'anos Bolyai Math. Soc., Budapest},
   },
   date={2010},
   pages={561--590},
   review={\MR{2815614}},
   doi={10.1007/978-3-642-14444-8\_16},
}

\bib{RRS3}{article}{
   author={R{\"o}dl, Vojt{\v{e}}ch},
   author={Ruci{\'n}ski, Andrzej},
   author={Szemer{\'e}di, Endre},
   title={A Dirac-type theorem for 3-uniform hypergraphs},
   journal={Combin. Probab. Comput.},
   volume={15},
   date={2006},
   number={1-2},
   pages={229--251},
   issn={0963-5483},
   review={\MR{2195584}},
   doi={10.1017/S0963548305007042},
}

\bib{RRSk}{article}{
   author={R{\"o}dl, Vojt{\v{e}}ch},
   author={Ruci{\'n}ski, Andrzej},
   author={Szemer{\'e}di, Endre},
   title={An approximate Dirac-type theorem for $k$-uniform hypergraphs},
   journal={Combinatorica},
   volume={28},
   date={2008},
   number={2},
   pages={229--260},
   issn={0209-9683},
   review={\MR{2399020}},
   doi={10.1007/s00493-008-2295-z},
}

\bib{Steger}{thesis}{
   author={Steger, A.},   
   title={Die {K}leitman--{R}othschild {M}ethode},
   type={Ph.D. Thesis},
   organization={Universit{\"a}t Bonn},
   date={1990},
}

\bib{Sz78}{article}{
   author={Szemer{\'e}di, Endre},
   title={Regular partitions of graphs},
   language={English, with French summary},
   conference={
      title={Probl\`emes combinatoires et th\'eorie des graphes},
      address={Colloq. Internat. CNRS, Univ. Orsay, Orsay},
      date={1976},
   },
   book={
      series={Colloq. Internat. CNRS},
      volume={260},
      publisher={CNRS, Paris},
   },
   date={1978},
   pages={399--401},
   review={\MR{540024}},
}

\end{biblist}
\end{bibdiv}

\end{document}